\documentclass[reqno, 12pt]{amsart}

\usepackage{array}
\usepackage{amsmath}
\usepackage{amsfonts}
\usepackage{amssymb}
\usepackage{enumerate}
\usepackage{amsthm}
\usepackage{amsmath, amscd}
\usepackage{bm}
\usepackage{xy}
\xyoption{all}
\usepackage{color}
\usepackage{marvosym}
\usepackage{amsbsy}
\usepackage{hyperref}
\usepackage{tikz}
\usetikzlibrary{matrix,arrows,backgrounds}
	
    \oddsidemargin  0.0in
    \evensidemargin 0.0in
    \textwidth      6.5in
    \headheight     0.0in
    \topmargin      0.0in
    \textheight=9.0in

\setlength{\marginparwidth}{.85in}

\newtheorem{introtheorem}{Theorem}

\newtheorem{introconjecture}{Conjecture}
\newtheorem{theorem}{Theorem}[subsection]

\newtheorem{lemma}[theorem]{Lemma}
\newtheorem{proposition}[theorem]{Proposition}
\newtheorem{corollary}[theorem]{Corollary}

\newtheorem{conjecture}[theorem]{Conjecture}

\theoremstyle{definition}
\newtheorem{definition}[theorem]{Definition}
\newtheorem{remark}[theorem]{Remark}

\newcommand{\op}[1]{\operatorname{#1}}

\newcommand{\leftexp}[2]{{\vphantom{#2}}^{#1}{#2}}

\newcommand{\modmod}[1]{/ \! \! / \!_{#1}}

\newcommand{\dbcoh}[1]{\operatorname{D}^{\operatorname{b}}(\operatorname{coh }#1)}

\newcommand{\weezer}{\leftexp{=}{\kern-0.23em\operatorname{W}}^{\kern-0.21em =}}

\newcommand{\sidenote}[1]{}

\newcommand{\fs}[1]{\op{Fuk}^\rightharpoonup ({#1})}
\newcommand{\langinz}{X^{LG}}

\def\ker{\op{\mbox{Ker}}}

\def\Z{\op{\mathbb{Z}}}
\def\C{\op{\mathbb{C}}}
\def\R{\op{\mathbb{R}}}

\def\O{\op{\mathcal{O}}}

\def\P{\op{\mathbb{P}}}

\def\ra{\rightarrow}

\def\tand{\text{ and } }

\title[The Mori Program and Toric HMS]{The Mori Program and Non-Fano Toric Homological Mirror Symmetry}

\author[Ballard]{Matthew Ballard}
\address{
  \begin{tabular}{l}
   Matthew Ballard  \\ 
   \hspace{.1in} University of Wisconsin-Madison, Department of Mathematics, Madison, WI, USA \\
   \hspace{.1in} Universit\"at Wien, Fakult\"at f\"ur Mathematik,  Wien, \"Osterreich \\
   \hspace{.1in} Email: {\bf ballard@math.wisc.edu} \\
  \end{tabular}
}

\author[Diemer]{Colin Diemer}
\address{
  \begin{tabular}{l}
   Colin Diemer  \\ 
   \hspace{.1in} University of Miami, Department of Mathematics, Coral Gables, FL, USA \\
   \hspace{.1in} Universit\"at Wien, Fakult\"at f\"ur Mathematik,  Wien, \"Osterreich\\ 
   \hspace{.1in} Email: {\bf cdiemer@gmail.com} \\
  \end{tabular}
}

\author[Favero]{David Favero}
\address{
  \begin{tabular}{l}
   David Favero \\
   \hspace{.1in} Universit\"at Wien, Fakult\"at f\"ur Mathematik,  Wien, \"Osterreich \\
   \hspace{.1in} Email: {\bf favero@gmail.com} \\
  \end{tabular}
}

\author[Katzarkov]{Ludmil Katzarkov}
\address{
  \begin{tabular}{l}
   Ludmil Katzarkov \\
   \hspace{.1in} University of Miami, Department of Mathematics, Coral Gables, FL, USA \\ 
   \hspace{.1in} Universit\"at Wien, Fakult\"at f\"ur Mathematik,  Wien, \"Osterreich \\
   \hspace{.1in} Email: {\bf lkatzark@math.uci.edu} \\
  \end{tabular}
}

\author[Kerr]{Gabriel Kerr}
\address{
  \begin{tabular}{l}
   Gabriel Kerr  \\ 
   \hspace{.1in} University of Miami, Department of Mathematics, Coral Gables, FL, USA \\ 
   \hspace{.1in} Email: {\bf gabriel.d.kerr@gmail.com} \\
  \end{tabular}
}

\numberwithin{equation}{section}
\begin{document}
\renewcommand{\labelenumi}{\emph{\alph{enumi})}}

\begin{abstract}
 In the case of toric varieties, we continue the pursuit of Kontsevich's fundamental insight, Homological Mirror Symmetry, by unifying it with the Mori program. We give a refined conjectural version of Homological Mirror Symmetry relating semi-orthogonal decompositions of the $B$-model on toric varieties to semi-orthogonal decompositions on the $A$-model on the mirror Landau-Ginzburg models. 
 
 As evidence, we prove a new case of Homological Mirror Symmetry for a toric surface whose anticanonical bundle is not nef, namely a certain blow-up of $\P^2$ at three infinitesimally near points.
 
\end{abstract}

\maketitle

\section{Introduction} \label{sec: Introduction}

In \cite{AKO1}, Auroux, Orlov, and the fourth named author discovered an ``unusual'' phenomenon regarding Homological Mirror Symmetry (HMS) for Hirzebruch surfaces.  This phenomenon also arose, in greater generality, in \cite{Abouzaid} where Abouzaid considered HMS for toric varieties whose anticanonical bundle is not ample.

The basic observation in the example of the Hirzebruch surface is the following. Consider the weighted projective stack, 
\[
\P(1:1:n) = [ (\mathbb{A}^3 \setminus \{0\}) / \mathbb{G}_m ]
\]
where $\mathbb{G}_m = \C^*$ acts with weights $1,1,n$. The toric fan has three rays with primitive generators $(1,0)$, $(0,1)$, and $(-1,-n)$. Following the Givental-Hori-Vafa prescription \cite{HoriVafa}, the mirror is a Landau-Ginzburg (LG) model
\[ 
w: (\C^*)^2 \to \C,
\]
where 
\[
w(x,y) = x+y+\frac{1}{x^n y}.
\]

Correspondingly, for the Hirzebruch surface 
\[
\mathbb{F}_n = \underline{\op{Proj}}(\O_{\P^1} \oplus \O_{\P^1}(n)),
\]
 the (naive) mirror is a Landau-Ginzburg model
\[
w': (\C^*)^2 \to \C,
\]
where 
\[
w'(x,y) = x+y+\frac{1}{x} + \frac{1}{x^n y}.
\]
It was observed in Lemma 5.3 of \cite{AKO1} that for $n \geq 2$ these two functions are isotopic as exact Lefschetz fibrations and therefore have equivalent Fukaya-Seidel categories.  

On the other hand, the bounded derived categories of coherent sheaves on $\P(1:1:n)$ and $\mathbb{F}_n$ are not equivalent for $n \not = 2$.  For example, a full strong exceptional collection for $\dbcoh{\mathbb{F}_n}$ is given by 
\[
T = \O_{\mathbb{F}_n} \oplus \pi^*\O_{\mathbb{P}^1}(1) \oplus \O_{\pi}(1) \oplus \pi^*\O_{\mathbb{P}^1}(1) \otimes \O_{\pi}(1)
\]
where $\pi: \mathbb{F}_n \to \P^1$ is the projection.
Similarly, a full strong exceptional collection for $\dbcoh{\P(1:1:n)}$ is given by
\[
T' = \bigoplus_{i=0}^{n+2} \O_{\mathbb{P}(1:1:n)}(i).
\]

\begin{figure}[ht]
\begin{center}
 \begin{tikzpicture}[ scale=1.2, level/.style={->,>=stealth,thick} ]
	\node (a) at (-2,2) {$\bullet$};
	\node (b) at (2,2) {$\bullet$};
	\node (c) at (2,-2) {$\bullet$};
	\node (d) at (-2,-2) {$\bullet$};
	\draw[level] (a) -- (d) node at (-2.2,0) {$x_2$};
	\draw[level] (b) -- (c) node at (2.25,0) {$x_2$};
	\draw[level] (-1.78,2.075) .. controls (-.9,2.3) and (.9,2.3)  .. (1.78,2.075) node at (0,2.4) {$x_0$};
	\draw[level] (a) -- (b) node at (0,1.85) {$x_1$};
	\draw[level] (-1.78,-2.075) .. controls (-.9,-2.3) and (.9,-2.3)  .. (1.78,-2.075) node at (0,-2.4) {$x_1$};
	\draw[level] (d) -- (c) node at (0,-1.85) {$x_0$};
	\draw[level] (1.925,1.775) .. controls (1.7,0) and (0,-1.7)  .. (-1.775,-1.925) node at (-.825,.825) {$x_0^3$};
	\draw[level] (1.775,1.925) .. controls (0,1.7) and (-1.7,0)  .. (-1.925,-1.775) node at (.8,-.8) {$x_1^3$};
	\draw[level] (1.855,1.785) .. controls (.5,0) and (0,-.5)  .. (-1.785,-1.855) node at (-.42,.42) {$x_0^2 x_1$};
	\draw[level] (1.785,1.855) .. controls (0,.5) and (-.5,0)  .. (-1.855,-1.785) node at (.4,-.4) {$x_0 x_1^2$};
 \end{tikzpicture}
\end{center}
\caption{Quiver representing the endomorphism algebra of $T$ $(n=4)$}
\label{fig: Hirzebruch Quiver}
\end{figure}

\begin{figure}[ht]

\begin{center}
\begin{tikzpicture}[
      scale=1.2,
      level/.style={->,>=stealth,thick}]
	\node (a) at (-5,0) {$\bullet$};
	\node (b) at (-3,0) {$\bullet$};
	\node (c) at (-1,0) {$\bullet$};
	\node (d) at (1,0) {$\bullet$};
	\node (e) at (3,0) {$\bullet$};
	\node (f) at (5,0) {$\bullet$};
	\draw[level] (-4.85,.1) .. controls (-4.4,.25) and (-3.6,.25)  ..  (-3.15,.1) node at (-4,.325) {$x_0$};
	\draw[level] (-4.85,-.1) .. controls (-4.4,-.25) and (-3.6,-.25)  .. (-3.15,-.1) node at (-4,-.35) {$x_1$};
	\draw[level] (-2.85,.1) .. controls (-2.4,.25) and (-1.6,.25)  .. (-1.15,.1) node at (-2,.325) {$x_0$};
	\draw[level] (-2.85,-.1)  .. controls (-2.4,-.25) and (-1.6,-.25)  .. (-1.15,-.1) node at (-2,-.35) {$x_1$};
	\draw[level] (-.85,.1) .. controls (-0.4,.25) and (.4,.25)  .. (.85,.1) node at (0,.325) {$x_0$};
	\draw[level] (-.85,-.1) .. controls (-0.4,-.25) and (.4,-.25)  ..  (.85,-.1) node at (0,-.35) {$x_1$};
	\draw[level] (1.15,.1) .. controls (1.6,.25) and (2.4,.25)  .. (2.85,.1) node at (2,.325) {$x_0$};
	\draw[level] (1.15,-.1) .. controls (1.6,-.25) and (2.4,-.25)  ..  (2.85,-.1) node at (2,-.35) {$x_1$};
	\draw[level] (3.15,.1) .. controls (3.6,.25) and (4.4,.25)  .. (4.85,.1) node at (4,.325) {$x_0$};
	\draw[level] (3.15,-.1) .. controls (3.6,-.25) and (4.4,-.25)  .. (4.85,-.1) node at (4,-.35) {$x_1$};
	\draw[level] (-4.9,.2) .. controls (-3,1) and (1,1) .. (2.9,.2) node at (-1,.95) {$x_2$};
	\draw[level] (-2.9,-.2) .. controls (-1,-1) and (3,-1) .. (4.9,-.2) node at (1,-.95) {$x_2$};
\end{tikzpicture}
\end{center}
\caption{Quiver representing the endomorphism algebra of $T'$ $(n=4)$}
\label{fig: Projective Quiver}
\end{figure}

Figures~\ref{fig: Hirzebruch Quiver} and \ref{fig: Projective Quiver} compare the endomorphism algebras of $T$ and $T'$ for $n=4$ (where there are implicit relations $x_ix_j = x_jx_i$ when both sides of the equality have the same head and tail).  Observe that removing the two central vertices from Figure~\ref{fig: Projective Quiver} gives exactly Figure~\ref{fig: Hirzebruch Quiver}.  In general, for $n \geq 2$ it is always the case that
\[
\op{End}(T) \cong \op{End}(\O_{\P(1:1:n)} \oplus \O_{\P(1:1:n)}(1)\oplus \O_{\P(1:1:n)}(n)\oplus \O_{\P(1:1:n)}(n+1)).
\]
This implies that for $n \geq 2$ there is a fully-faithful functor (see Theorem 2.29 of \cite{AKO1})
\begin{equation}
\op{MK}_n : \dbcoh{\mathbb{F}_n} \to \dbcoh{\P(1:1:n)}.\label{eqn: MK}
\end{equation}

In summary, while the Fukaya-Seidel categories of the LG mirrors are equivalent, the derived categories of coherent sheaves on the toric varieties are only related by a fully-faithful functor. It is natural to expect that the discrepancy comes from the fact that the anticanonical divisor on $\mathbb{F}_n$ is no longer nef for $n>2$.

Moreover, in the mirror, this discrepancy can be accounted for through a certain deformation.  Consider the family of isotopic LG models obtained by introducing the complex parameter $b$ in the equation
\[
w = x+y+\frac{1}{x} + \frac{b}{x^n y}.
\]
For $b\neq 0$ these functions yield equivalent Fukaya-Seidel categories, and have $n+2$ critical points, in bijection with the $n+2$ indecomposible summands of $T'$.  However, we have observed that $\dbcoh{\mathbb{F}_n}$ corresponds to $4$ of the indecomposible summands of $T'$.  This is explained by Lemma 5.4 of \cite{AKO1} which observes that if we let $b$ go to zero, $n-2$ of these critical points go off towards infinity while $4$ remain in a bounded region.  Figure~\ref{fig: critical points} is an image appearing in \cite{AKO1}.  It illustrates this phenomenon for $n=8$.   Therefore instead of the (naive) mirror, we can consider what we would call the homological mirror, which consists only of those critical fibers contained in the bounded region.  

One aim of this work is to understand this phenomena conceptually in terms of moduli spaces of Landau-Ginzburg models, as introduced in  \cite{DKK12}. This work also introduced the notion of ``radar screens" which gives a more general formalism 
for working with situations where critical loci of Landau-Ginzburg models assemble into regions upon a deformation. 

\begin{figure}[ht]

\begin{center}
\includegraphics[height=60mm]{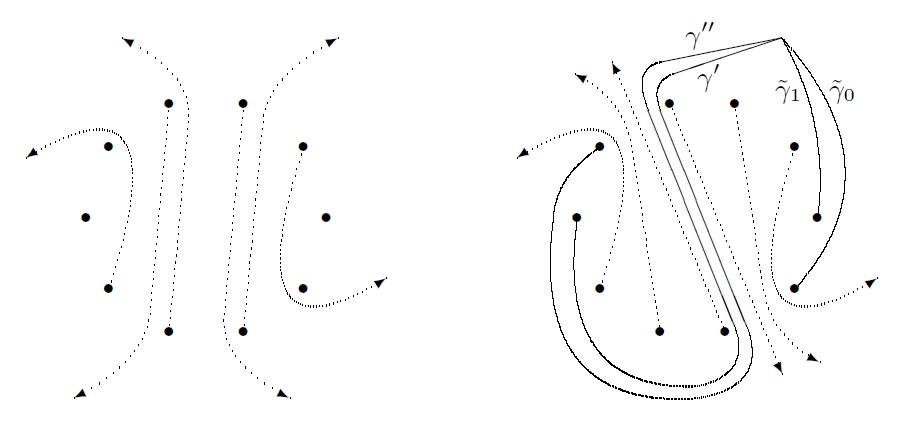}
\end{center}
\caption{The deformation $b \to 0$ $(n=8)$}
\label{fig: critical points}
\end{figure}

In general, mirror symmetry identifies complex moduli with K\"ahler moduli, and in the example above the complex deformation parameter $b$ is mirror to a deformation of the complexified K\"ahler class.  For a smooth toric variety $X$ the Hodge numbers $h^{2,0}$ and $h^{0,2}$ vanish, hence symplectic forms can be identified with $\R$-divisors $\op{H}^2(X, \R) \cong \op{Pic}(X)\otimes\mathbb{R} =: \op{Pic}_{\R}(X)$.  Furthermore, the GKZ fan, which parametrizes the various GIT quotients obtained from the Cox quotient construction, can also be identified with a fan supported on the pseudo-effective cone in $\op{Pic}_{\R}(X)$. In this way, degenerating the complex parameter of the mirror amounts to variation of GIT quotients (VGIT) on $X$. For the Hirzebruch surface, the two chambers of the GKZ fan correspond to $\P(1:1:n)$ and $\mathbb{F}_n$ as shown in Figure~\ref{fig: GKZ fan}.
\sidenote{{\color{red} Pseudo-effective = effective. Should we just say effective?}}

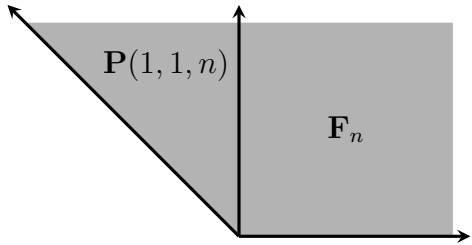
\begin{figure}[h]
\begin{center}
\begin{tikzpicture}
  [scale=.57, vertex/.style={circle,draw=black!100,fill=black!100,thick, inner sep=0.5pt,minimum size=0.5mm}, cone/.style={->,very thick,>=stealth}]
  \filldraw[fill=black!30!white,draw=white!100]
    (-5,5) -- (0,0) -- (5,0) -- (5,5) -- (-5,5);
  \draw[cone] (0,0) -- (5.4,0);
  \draw[cone] (0,0) -- (0,5.4);
  \draw[cone] (0,0) -- (-5.4,5.4);
  \node at (2.5,2.5) {$\mathbf{F}_n$};
  \node at (-1.7,4) {$\mathbf{P}(1,1,n)$};
\end{tikzpicture}
\end{center}
\caption{The GKZ fan of a Hirzebruch surface}
\label{fig: GKZ fan}
\end{figure}

In \cite{BFK12}, a precise relationship between the derived categories of quotient stacks coming from VGIT is given. It, in particular, reproduces the inclusion \ref{eqn: MK}.

In this paper, we aim to entirely describe this phenomenon for a general run of the toric Mori program for toric projective DM stacks, building on themes from \cite{DKK12} and \cite{BFK12}.  This includes a framework for Homological Mirror Symmetry for cases where the anticanonical divisor is not ample, nor even nef. 

Indeed, in \cite{DKK12}, the authors demonstrated that any (straight line) run of the toric Mori program degenerates the mirror so that critical points can be arranged into successive annuli.
This arrangement of critical points provides a semi-orthogonal decomposition of the Fukaya-Seidel category of this mirror.

\begin{introtheorem}[\cite{DKK12}] For every maximal degenerated LG mirror $\psi$ of a family $\psi (t)$ there is a
semi-orthogonal decomposition 
\[
\langle \overbrace{ \fs{\psi_1}, \ldots , \fs{\psi_1 }}^{-\mu_1},
\ldots , \overbrace{\fs{\psi_s } , \ldots, \fs{\psi_s }}^{-\mu_s} \rangle.
\]
of the Fukaya-Seidel category $\fs{\psi(t)}$ for $t \approx 0$. 
 Here the $\psi_i$ are regenerations of the degenerated LG model corresponding to each annulus and $-\mu_i$ are positive integers associated to the corresponding run of the toric Mori program.
\end{introtheorem}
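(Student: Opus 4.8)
The plan is to study the Lefschetz fibration $\psi(t)\colon E \to \C$ as $t$ tends to $0$, to show that its critical values organize into the nested annuli predicted by the Mori program run, and then to assemble the semi-orthogonal decomposition out of Lefschetz thimbles grouped by annulus. For the first step, recall that a straight-line run of the toric Mori program is a lattice path through the chambers $\sigma_1,\ldots,\sigma_s$ of the secondary (GKZ) fan, and that on the mirror this scales the monomial coefficients of the Givental--Hori--Vafa superpotential by powers $t^{d_j}$ with exponents $d_j$ read off from the path. I would carry out a tropical/Newton-polytope analysis of the family $\psi(t)$: after a suitable rescaling of the domain torus coordinates and of the target, the terms dominant at the $i$-th scale recover the superpotential of the quotient $X_i$ --- equivalently, of the regenerated model $\psi_i$ --- while the remaining terms become subdominant. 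One then concludes that the $N = \sum_i (-\mu_i)\, N_i$ critical points of $\psi(t)$, where $N_i$ is the number of critical points of $\psi_i$, split as $t\to 0$ into $s$ clusters, the $i$-th one lying in an annulus $A_i = \{\, r_i < |w| < r_{i+1} \,\}$ and carrying $(-\mu_i)\, N_i$ critical values. The multiplicity $-\mu_i$ is the width of the corresponding wall-crossing (a discrepancy-type integer attached to that step of the program), and its appearance here mirrors the weight count that governs the semi-orthogonal decompositions of \cite{BFK12}.

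The second step is to recognize each annulus. I would show that the restriction of $\psi(t)$ to a neighborhood of $A_i$ is, up to the symplectic isotopies that do not alter the Fukaya--Seidel category, a disjoint union of $-\mu_i$ copies of the Lefschetz fibration of $\psi_i$: after zooming in on $A_i$ the fibration becomes $\psi_i$ plus a tame perturbation supported near infinity, and then deformation invariance of the Fukaya--Seidel category under such perturbations, together with a locality statement --- that the category depends only on the fibration over a neighborhood of the critical values together with the monodromy identifications --- identifies the thimbles lying over $A_i$ with those of $\bigoplus^{-\mu_i}\psi_i$, i.e.\ with $-\mu_i$ copies of $\fs{\psi_i}$.

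For the third step, I would choose a distinguished collection of vanishing paths for $\psi(t)$ adapted to the nested annular structure --- precisely the configuration the radar-screen formalism is built to handle --- so that the thimbles over $A_1,\ldots,A_s$ occur in consecutive blocks of the distinguished basis. By Seidel's theory the associated thimbles generate $\fs{\psi(t)}$, and directedness of the resulting $A_\infty$-category makes the morphism complexes between thimbles of different blocks vanish in one direction; grouping the thimbles by annulus therefore yields a semi-orthogonal decomposition $\langle \mathcal{C}_1,\ldots,\mathcal{C}_s\rangle$ with $\mathcal{C}_i$ generated by the thimbles over $A_i$. By the second step each $\mathcal{C}_i$ is $-\mu_i$ copies of $\fs{\psi_i}$, which is the asserted decomposition.

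The hard part will be the recognition step of the second paragraph: making rigorous the assertion that the fibration over each annulus carries exactly the Fukaya--Seidel data of $\psi_i$. This requires a precise locality statement for $\fs{\cdot}$ --- that it is determined by the germ of the fibration near the critical values together with the monodromy gluing --- and uniform control, for $t$ small, of the subdominant terms after rescaling, so that they neither create nor destroy critical points nor change the symplectic isotopy class over $A_i$. The second most delicate ingredient is the combinatorics behind the first step: extracting the nested radii $r_i$ and the multiplicities $-\mu_i$ from the GKZ-fan path, and arranging the distinguished basis so that the blocks of the third step are genuinely consecutive.
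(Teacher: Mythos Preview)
The paper does not give a proof of this theorem; it is stated both in the introduction and again as Theorem~\ref{thm: fs decomposition} with a citation to \cite{DKK12}, and the surrounding text in Section~\ref{sec: A side} serves as background and motivation rather than as an argument. So there is no ``paper's own proof'' to compare against.

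That said, your sketch is broadly consonant with the picture the paper paints in the lead-up to the statement. The paper describes the degenerating pencil $\psi(t)\colon \mathbb{P}^1\to\mathcal{X}_{\Sigma(A)}$ as bubbling into a chain of components $Z_i$ indexed by the edges of the secondary polytope along the chosen monotone path, with $\psi(t)|_{B_i}$ converging to an $a_i$-fold cover of $Z_i$; the annuli $B_i$ in the base separate the critical values into the groups $S_{i1},\ldots,S_{ia_i}$, and this ``radar screen'' is exactly what you are invoking in your third step. Two points of friction with your write-up are worth flagging. First, the multiplicities: the paper records $a_i = -\mu_i\cdot\dim K_0(W_i)$ (Remark~\ref{rem: matching up}), with $a_i=-\mu_i$ only when the circuit is non-degenerate, so the bookkeeping in your first step (``$(-\mu_i)N_i$ critical values'') is not quite the same as the paper's. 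Second, your ``recognition step'' posits that over each annulus the fibration is a disjoint union of $-\mu_i$ copies of $\psi_i$; the paper's description is rather that the restriction converges to a ramified cover of a single circuit pencil, with the $a_i$ sheets providing the repetition, which is a slightly different geometric mechanism than a literal disjoint union.

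Since the actual proof lives in \cite{DKK12}, if you want to assess whether your outline matches the authors' argument you will need to consult that paper directly; the present paper only summarizes the statement and the heuristics behind it.
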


On the other side of mirror symmetry, we recall the following result. 
\begin{introtheorem}[\cite{BFK12}] \label{thm: inclusion into nef}
 Let $X$ be a projective toric DM stack, equipped with a run of the toric Mori program. Starting from $X$, let $(W_1, \mu_1), \ldots, (W_s, \mu_s)$ be the wall data associated to the run. Then there exists a semi-orthogonal decomposition
\[
\langle \overbrace{\dbcoh{W_1}, \ldots, \dbcoh{W_1}}^{-\mu_1}, \ldots, \overbrace{\dbcoh{W_s}, \ldots, \dbcoh{W_s}}^{-\mu_s}  \rangle.
\]
of $\dbcoh{X}$. 
\end{introtheorem}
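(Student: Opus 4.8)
The plan is to realize $X$ as a GIT quotient via the Cox construction, to recognize a run of the toric Mori program as a path of linearizations crossing finitely many walls of the GKZ (secondary) fan, and then to iterate the comparison of derived categories across a single elementary wall crossing, telescoping the resulting semiorthogonal decompositions. So, write $X = [(\mathbb{A}^N \setminus Z_X)/G]$ with $G = \op{Hom}_{\Z}(\op{Pic}(X),\mathbb{G}_m)$ acting diagonally with weights the classes of the toric boundary divisors, and $Z_X$ the irrelevant locus attached to the nef chamber $\mathcal{C}_X$ of $\Sigma_{\mathrm{GKZ}}$, whose chambers carrying nonempty quotients sweep out $\op{Eff}(X)$. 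By the Reid--Thaddeus--Dolgachev--Hu dictionary between VGIT and birational contractions, a run of the toric Mori program out of $X$ is recorded by a generic path $\gamma$ in $\op{Pic}_{\R}(X)$ issuing from $\mathcal{C}_X$ and leaving $\op{Eff}(X)$; writing $X = X_0, X_1, \ldots, X_s$ for the quotients of the chambers met successively by $\gamma$, each again a projective toric DM stack, a generic $\gamma$ meets the walls $\mathcal{W}_1, \ldots, \mathcal{W}_s$ one at a time in their relative interiors, so each crossing is \emph{elementary}, and $X_s = \varnothing$ once $\gamma$ has exited the effective cone.

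The heart is the single elementary wall crossing. Fix $\mathcal{W}_i$, let $\lambda_i\colon \mathbb{G}_m \to G$ be the primitive one-parameter subgroup perpendicular to $\mathcal{W}_i$, let $Z_i \subseteq \mathbb{A}^N$ be its fixed locus and $S_i^{\pm}$ the Bialynicki--Birula strata removed, resp.\ added, in passing from $X_{i-1}$ to $X_i$. The wall center is the projective toric DM stack $W_i = [Z_i^{ss}/(G/\lambda_i)]$, and $\mu_i$ is the relative canonical weight
\[
\mu_i \;=\; \op{wt}_{\lambda_i}\!\bigl(\det N_{S_i^-/\mathbb{A}^N}|_{Z_i}\bigr) - \op{wt}_{\lambda_i}\!\bigl(\det N_{S_i^+/\mathbb{A}^N}|_{Z_i}\bigr),
\]
which one checks equals the integer recorded in the Mori wall data and which, along a run oriented so that $X_{i-1}$ is the ``larger'' quotient, satisfies $-\mu_i \geq 0$. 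The grade-restriction-window argument then produces a semiorthogonal decomposition
\[
\dbcoh{X_{i-1}} \;=\; \bigl\langle\, \overbrace{\dbcoh{W_i}, \ldots, \dbcoh{W_i}}^{-\mu_i},\ \dbcoh{X_i}\,\bigr\rangle
\]
(an equivalence when $\mu_i = 0$): one takes the subcategory of $\dbcoh{[\mathbb{A}^N/G]}$ cut out by requiring $\lambda_i$-weights along $Z_i$ to lie in a half-open interval of length $\op{codim} S_i^{\pm}$, shows its restriction to $\dbcoh{X_{i-1}}$ is an equivalence, and matches each of the $-\mu_i$ ``extra'' weight slabs on the $X_i$-side with $\dbcoh{W_i}$ via (co)restriction along $S_i^{\pm}$, i.e.\ derived Kirwan surjectivity.

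Splicing these decompositions together along $\gamma$, and using that each intermediate $X_j$ is again a projective toric DM stack to which the previous step applies,
\begin{align*}
\dbcoh{X} = \dbcoh{X_0} &= \bigl\langle \overbrace{\dbcoh{W_1}, \ldots, \dbcoh{W_1}}^{-\mu_1},\ \dbcoh{X_1}\bigr\rangle = \cdots \\
&= \bigl\langle \overbrace{\dbcoh{W_1}, \ldots, \dbcoh{W_1}}^{-\mu_1},\ \ldots,\ \overbrace{\dbcoh{W_s}, \ldots, \dbcoh{W_s}}^{-\mu_s},\ \dbcoh{X_s}\bigr\rangle,
\end{align*}
and since $X_s = \varnothing$ we have $\dbcoh{X_s} = 0$, which drops out, leaving the asserted decomposition. (If one prefers to terminate the run at a minimal model or a Mori fiber space, the residual term $\dbcoh{X_s}$ survives; continuing the path out of $\op{Eff}(X)$ absorbs it into further wall contributions.)

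The main obstacle is the single elementary wall crossing carried out in the stacky generality needed here: proving that the window restriction functor is an equivalence onto $\dbcoh{X_{i-1}}$ (and onto $\dbcoh{X_i}$ after shifting the interval) and that each extra weight slab is equivalent, as a triangulated category, to $\dbcoh{W_i}$ — this is the derived Kirwan surjectivity input — together with the bookkeeping that matches the weight-theoretic $\mu_i$ with the combinatorial datum of the Mori wall data, uniformly and with the correct sign, across flips, divisorial contractions, and fiber-type contractions (including the degenerate boundary wall, where $X_i = \varnothing$ but $W_i \neq \varnothing$). A subsidiary point is the reduction to elementary crossings: perturbing $\gamma$ so that it meets each wall transversally in its relative interior, and resolving non-elementary crossings (several strata colliding) into compositions of elementary ones; finiteness of the list $\mathcal{W}_1, \ldots, \mathcal{W}_s$ is immediate from finiteness of $\Sigma_{\mathrm{GKZ}}$.
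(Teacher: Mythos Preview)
Your proposal is correct and follows essentially the same approach as the paper: the paper's proof (Theorem~\ref{thm: SOD on B-side}) likewise iterates the single-wall VGIT comparison of Theorem~\ref{theorem: SOD of derived categories for toric variations} (quoted from \cite{BFK12}) along the chambers traversed by $\gamma$, using $\mu_i \leq 0$ on a run to place $\dbcoh{X_i}$ on the right at each step and then telescoping. The only difference is one of emphasis: you sketch the internals of the window/Kirwan-surjectivity argument and the Bialynicki--Birula bookkeeping that produce the one-wall decomposition, whereas the paper treats that step as a black box imported from \cite{BFK12}.
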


In conclusion, a (straight line) run of the toric Mori program for $X$ yields both a semi-orthogonal decomposition of $\dbcoh{X}$ and a semi-orthogonal decomposition of the Fukaya-Seidel category of the Landau-Ginzburg mirror of $X$.

From here on, let us say that a variety is {\bf nef-Fano} if it has $\mathbb{Q}$-factorial singularities and the anticanonical divisor $-K_X$ is big and nef. This condition is also often called weak Fano, usually with the additional assumption that the singularities are canonical. We will say that a Deligne-Mumford (or DM) stack is nef-Fano if its underlying coarse variety is nef-Fano. 

Given $X$ there is always a (straight line) run of the toric Mori program which contains a nef-Fano projective toric DM stack $X'$ as a phase (see Proposition~\ref{prop: existence}). This allows us to view the mirror to any $X$ as a Landau-Ginzburg model coming from a neighborhood of some collection of critical points in the mirror to $X'$. Therefore we propose that, like the homological mirror to $\mathbb{F}_n$, the homological mirror of any projective toric DM stack $X$ can be realized as a regeneration of the degenerated Landau-Ginzburg mirror  of $X'$ (in the sense of \cite{DKK12}).

With this picture in mind, we extend one half of Homological Mirror Symmetry for toric varieties as follows:

\begin{introconjecture} \label{conj: conjecture}
 Given, $\gamma$, a straight line run of the toric Mori program for a projective DM toric stack, $X$, and its associated maximal degeneration $\psi$ of an LG mirror pencil, let 
 \begin{align*}
  \op{Fuk}^{\rightharpoonup} (\psi(t) ) & = \langle \mathcal{T}_1, \ldots, \mathcal{T}_r \rangle \\
  \dbcoh{X} & = \langle \mathcal{S}_1, \ldots, \mathcal{S}_r \rangle
 \end{align*}
 be the semi-orthogonal decompositions associated to $\psi$ and $\gamma$.
 
 Then there exists an equivalence of triangulated categories
\begin{equation*} \Phi_\psi : \op{Fuk}^{\rightharpoonup} (\psi(t) ) \to \dbcoh{X} \end{equation*}
 which restricts to equivalences $\Psi_\psi : \mathcal{T}_i \to \mathcal{S}_i$ for all $1 \leq i \leq r$.
\end{introconjecture}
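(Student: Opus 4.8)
The plan is to establish Conjecture~\ref{conj: conjecture} in the single new case promised in the abstract, where $X$ is a blow-up of $\P^2$ at three infinitesimally near torus-fixed points; a proof in full generality is not in reach. First I would fix the combinatorial data: take the fan of $X$ obtained from that of $\P^2$ by inserting three successive rays inside one maximal cone, so that $X$ has six rays and one of the $(-n)$-curves ($n \geq 2$) on the resulting exceptional chain meets $-K_X$ negatively, making $X$ non-nef-Fano. Choose $\gamma$, a straight-line run of the toric Mori program from $X$; by Proposition~\ref{prop: existence} we may take $\gamma$ to lie inside a variation of GIT that also has a nef-Fano phase $X'$, and use the Hori-Vafa mirror of $X'$ to build the pencil $\psi(t) = \{\sum_{\rho} c_\rho(t)\,x^{\rho}\}$ and its maximal degeneration $\psi$. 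For a surface the wall data $(W_i,\mu_i)$ along $\gamma$ consist of lower-dimensional toric pieces --- weighted projective lines or points --- each carrying an explicit multiplicity $-\mu_i$. Record the two decompositions to be matched: the $B$-side one from Theorem~\ref{thm: inclusion into nef} applied to $\gamma$, and the $A$-side one from the quoted theorem of \cite{DKK12} applied to $\psi$.

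Next I would produce the building-block equivalences underlying $\Psi_\psi$: an equivalence $\fs{\psi_i} \simeq \dbcoh{W_i}$ for each wall. For $W_i$ a weighted projective line (or a point) this is classical HMS, and the content of \cite{DKK12} is precisely that the regenerated factor $\psi_i$, localized to its annulus, is isotopic to the Hori-Vafa mirror of $W_i$ --- the same mechanism that makes the critical locus of $w = x+y+x^{-1}+b\,x^{-n}y^{-1}$ split into a bounded cluster plus $n-2$ escaping points in the $\mathbb{F}_n$ versus $\P(1:1:n)$ story, now run for a longer chain of walls.

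The substance is gluing these equivalences into one $\Phi_\psi$ that respects the two semi-orthogonal decompositions. I would present both sides by a quiver with relations and match them, in the spirit of the comparison of Figures~\ref{fig: Hirzebruch Quiver} and \ref{fig: Projective Quiver}. On the $B$-side, fix a full strong exceptional collection on $\dbcoh{X}$ --- toric surfaces admit one --- whose endomorphism algebra $A$ is an explicit path algebra with relations, and in which the decomposition of Theorem~\ref{thm: inclusion into nef} is the one cut out by the sub-collections ``supported on'' the walls $W_i$; the window/grade-restriction machinery of \cite{BFK12} pins down the $\op{Hom}$'s between blocks. On the $A$-side, choose a distinguished basis of vanishing cycles for $\psi(t)$ adapted to the annular partition; the directed Fukaya-Seidel category is then the path algebra of the quiver of Lefschetz thimbles, whose intra-annulus arrows recover $\op{End}$ of the mirror exceptional collection of each $W_i$ and whose inter-annulus arrows are the thimbles crossing between successive annuli. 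One then checks that these two quivers-with-relations agree, which gives $\Phi_\psi$ as the induced equivalence of module categories; it restricts to $\Psi_\psi : \mathcal{T}_i\to\mathcal{S}_i$ because both decompositions are the ones induced by the common sub-collections. The main obstacle is exactly this last step --- verifying that the combinatorics of thimbles crossing between annuli reproduces, arrow-for-arrow and relation-for-relation, the inter-block $\op{Ext}$-groups forced by the VGIT description of $\dbcoh{X}$. There is no formal shortcut: it is a hands-on computation for the six-ray fan, and it is where the non-nef-Fano geometry genuinely intervenes.
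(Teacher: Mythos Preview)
Your outline is close in spirit to the paper's approach --- both sides are presented via an explicit exceptional collection/distinguished basis of six objects, and one matches the resulting algebras --- but there is a genuine gap in your plan: you assume the comparison reduces to matching a \emph{strong} exceptional collection, i.e.\ an ordinary path algebra with relations. For the collection adapted to the Mori run this is false. In the paper's example the wall contributions $E_3,E_4,E_5$ are structure sheaves of (unions of) exceptional curves, and $\op{Hom}^1(E_3,E_4)$ and $\op{Hom}^1(E_3,E_5)$ are nonzero (the arrows $d_4,d_5$ have degree $1$). The endomorphism algebra therefore carries a nontrivial minimal $A_\infty$-structure: a nonzero $m_3$ that the paper computes on the $B$-side by resolving $\mathcal O_{D_3+D_4+D_5}$ by a two-term complex of line bundles and running homological perturbation, and on the $A$-side by counting holomorphic quadrilaterals among the vanishing cycles. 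Matching $m_2$ alone (your ``quiver with relations'') does not give an equivalence of enhanced triangulated categories; the identification $\Phi_\psi$ only goes through once the six $m_3$-values agree.

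A second, smaller point: in this particular example every $W_i$ is a point, so your ``building-block HMS for weighted projective lines'' step is vacuous and all the content lives in the inter-block morphisms. The paper does not attempt to glue pre-existing equivalences $\fs{\psi_i}\simeq\dbcoh{W_i}$; it simply writes down the six vanishing cycles in a single fiber (using the circuit/tropical description to locate them) and computes the full directed $A_\infty$-algebra from scratch. Your plan would need to be amended accordingly: drop the strongness assumption, compute the graded quiver with its higher products on both sides, and verify agreement of the $m_3$'s --- that, not the $m_2$ relations, is where the non-nef-Fano geometry actually shows up.
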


As evidence, we prove Conjecture \ref{conj: conjecture} in the following case: take $\P^2$, blow it up at a point, and then blow up at the two points on the exceptional locus corresponding to the two coordinate lines passing through the original point. Call the resulting space $X$.

\begin{introtheorem}
 Conjecture \ref{conj: conjecture} is true for a choice of run of the Mori program on $X$. In particular, Homological Mirror Symmetry holds for $X$.
\end{introtheorem}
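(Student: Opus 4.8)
The plan is to explicitly produce the two semi-orthogonal decompositions predicted by Conjecture~\ref{conj: conjecture} for a well-chosen run $\gamma$ of the toric Mori program on $X$, and then match the pieces. First I would choose the run: starting from $X$, which is a blow-up of $\P^2$ at three infinitesimally near points, contract exceptional curves one at a time. A natural choice factors the Mori program through a nef-Fano phase $X'$ (guaranteed by Proposition~\ref{prop: existence})—here $X'$ should be a del Pezzo-like or weak-Fano toric surface obtained by reordering/reweighting the GIT data—followed by a sequence of divisorial contractions and/or flip-like wall crossings landing back at $X$ (and possibly continuing down to weighted projective space). Recording the wall data $(W_1,\mu_1),\dots,(W_s,\mu_s)$, Theorem~\ref{thm: inclusion into nef} then hands us the decomposition $\dbcoh{X}=\langle\mathcal S_1,\dots,\mathcal S_r\rangle$ with the $\mathcal S_i$ being (shifts/twists of) copies of $\dbcoh{W_j}$; since each $W_j$ is a toric surface or toric point for which a full strong exceptional collection is classical (Beilinson/King-type collections of line bundles), these pieces are generated by exceptional objects and completely understood.

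Next I would construct the $A$-side. Write down the Givental--Hori--Vafa LG mirror $w\colon(\C^*)^2\to\C$ of $X$ together with the pencil $\psi(t)$ coming from the VGIT/GKZ degeneration attached to $\gamma$, and identify the maximal degeneration $\psi$. By the first quoted theorem of \cite{DKK12}, the critical points of $\psi(t)$ for $t\approx0$ organize into successive annuli, yielding $\op{Fuk}^\rightharpoonup(\psi(t))=\langle\mathcal T_1,\dots,\mathcal T_r\rangle$ with each $\mathcal T_i$ a copy of $\op{Fuk}^\rightharpoonup(\psi_i)$, where the regenerated pieces $\psi_i$ are themselves LG mirrors of the $W_j$. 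Because $X$ has dimension $2$, each $\psi_i$ is an exact Lefschetz fibration over $\C$ with finitely many nondegenerate critical points whose Fukaya--Seidel category is computed from a distinguished basis of vanishing cycles; I would compute these vanishing cycles explicitly on the fiber (an open Riemann surface) and extract the directed $A_\infty$-algebra. For the nef-Fano building blocks this is where one invokes (or re-derives in the toric-surface case) known HMS statements: the Fukaya--Seidel category of the mirror of each $W_j$ is equivalent to $\dbcoh{W_j}$ via matching vanishing cycles with the line bundles in the exceptional collection, the directed quiver with the Beilinson-type endomorphism quiver (as in Figures~\ref{fig: Hirzebruch Quiver}--\ref{fig: Projective Quiver}), and the higher products with the relations.

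Finally I would assemble the global equivalence $\Phi_\psi$ from the piecewise equivalences $\mathcal T_i\xrightarrow{\sim}\mathcal S_i$. The key point is compatibility of the gluing data: a semi-orthogonal decomposition is not just its pieces but also the morphisms between them, so I must check that the Hom-complexes $\op{RHom}(\mathcal T_i,\mathcal T_j)$ on the $A$-side agree with $\op{RHom}(\mathcal S_i,\mathcal S_j)$ on the $B$-side under the piecewise identifications. Geometrically these cross-terms are counted by holomorphic strips running between vanishing cycles in different annuli, and on the $B$-side by $\Ext$'s between the twisted exceptional blocks determined by the wall-crossing functors of \cite{BFK12}; the expected obstacle—and the technical heart of the argument—is this mutation/gluing bookkeeping: showing that the VGIT wall-crossing functors on $\dbcoh{X}$ correspond, term by term with the correct twists and shifts, to the monodromy/parallel-transport functors gluing the annular pieces of $\op{Fuk}^\rightharpoonup(\psi(t))$, i.e.\ that the two SODs are equivalent as \emph{filtered} (not merely as collections of) categories. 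Once this is established for the chosen $\gamma$, restricting $\Phi_\psi$ to each $\mathcal T_i$ gives the desired $\Psi_\psi$, and the special case $r=1$ (or composing all the wall-crossings) yields in particular the plain equivalence $\op{Fuk}^\rightharpoonup(w)\simeq\dbcoh{X}$, i.e.\ ordinary HMS for this non-nef-Fano toric surface.
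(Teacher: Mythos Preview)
Your outline has the right shape, but both the weighting and the method for the hard step differ from the paper's actual argument. For the chosen run (Figure~\ref{fig: minimal model run}) every wall datum has $W_i$ equal to a \emph{point} (Table~\ref{tab: weights}); the resulting semi-orthogonal decompositions on both sides are therefore just full exceptional collections of length six, and your step ``match the pieces $\mathcal T_i\simeq\mathcal S_i$ by invoking known HMS for the $W_j$'' is vacuous---there are no lower-dimensional toric surfaces to feed in. All of the content sits in what you correctly flag as the technical heart, the cross-morphisms; but here the paper does \emph{not} establish any abstract correspondence between VGIT wall-crossing functors and symplectic monodromy/parallel-transport. It proceeds by direct computation: on the $B$-side it writes down the six exceptional sheaves $E_0,\ldots,E_5$ explicitly, replaces the torsion ones by two-term locally-free resolutions to get a dg-endomorphism algebra $\widehat A$, and runs homological perturbation (Section~\ref{sec: perturbation}) to extract the minimal $A_\infty$-algebra---the ``scissors quiver'' of Figure~\ref{fig: cohomology quiver} with its relations and its handful of nonzero $m_3$'s. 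On the $A$-side it uses the tropical/circuit local models of Section~\ref{subsubsec: tropical} and Figure~\ref{fig:circuit vanishing cycles} to draw the six vanishing cycles $L_0,\ldots,L_5$ on the fiber (Figure~\ref{fig:fiber}), and then counts holomorphic triangles and quadrilaterals by hand to recover the identical quiver, relations, and $m_3$'s. The equivalence $\Phi_\psi$ is the resulting match of $A_\infty$-algebras.

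So your functor-level gluing plan is a reasonable long-term program, but as written it is not a proof: nothing in the paper (or in \cite{BFK12}/\cite{DKK12}) supplies the general statement that wall-crossing functors equal parallel-transport functors, and making that precise for this example would in practice reduce to the same explicit polygon-counting and homological-perturbation computation the paper carries out.
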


\subsection{Relation with other work}

Homological Mirror Symmetry (HMS) was conceived for Calabi-Yau varieties and for toric varieties by Kontsevich \cite{KonHMS,KonCourse}.

Seidel proved HMS for the projective plane \cite{SeidelMore} and Auroux, Orlov, and  the fourth named author proved HMS for weighted projective planes \cite{AKO1} and del Pezzo surfaces (toric and non-toric) \cite{AKO2}. Independently, Ueda proved HMS for toric del Pezzo surfaces \cite{Ueda} and Ueda together with Yamazaki then proved it for toric orbifolds of del Pezzo surfaces \cite{UedYam}. Furthermore, Bondal and Ruan have announced a proof for weighted projective spaces of all dimensions \cite{BR}. 

Abouzaid proved HMS for smooth Fano toric varieties in \cite{Abouzaid} and, for general smooth projective toric varieties, provided an embedding
\begin{displaymath}
 \dbcoh{X} \to \op{Fuk}^{\rightharpoonup}(w)
\end{displaymath}
where $w$ is the Hori-Vafa Landau-Ginzburg mirror. We emphasize that when $X$ is not Fano, the image of Abouzaid's functor is not well understood in terms of mirror symmetry.  Fang, Liu, Treumann, and Zaslow proved a very similar statement via different means in \cite{FLTZ08}. \sidenote{\color{blue} this sentence is vague} A natural question to ask, given the results of \cite{Abouzaid} and \cite{FLTZ08}, is: for a non-nef Fano projective toric DM stack $X$, how does one naturally determine the subcategory of the Fukaya-Seidal category of the Hori-Vafa mirror which corresponds to the derived category of $X$? This paper proposes that following the toric Mori program through mirror symmetry gives the answer.

In the case of Picard rank one, Conjecture \ref{conj: conjecture} reduces to the usual statement of Homological Mirror Symmetry identically. For higher Picard rank, it includes the usual HMS statement, with additional structure.

While we are concerned with comparing the $B$-model on the toric stack to the $A$-model on the mirror LG model, there has also been work in the other direction, comparing the $A$-model on the toric stack to the $B$-model on the LG model. For $\P^2$, Chan and Leung showed how to produce matrix factorizations via Strominger-Yau-Zaslow \cite{SYZ} fibrations. In forthcoming work \cite{AFooooooooooooo}, Abouzaid, Fukaya, Oh, Ohta, and Ono advance further this direction of HMS. Abouzaid and Auroux have pointed out that our mirrors do not suffice for this other direction of mirror symmetry.

A similar result to Theorem \ref{thm: inclusion into nef} can be proven using Kawamata's pioneering work \cite{Kaw06}. The connection between the Mori program and derived categories also goes back to Kawamata in \cite{Kaw05, KawD-K} and extended in \cite{BFK12} and \cite{HL12}. 

On the other hand, as we will see, the moduli space of LG models (and it's compactification) is a crucial ingredient for the $A$-side of our approach.
Work in preparation of  Kontsevich, Pantev, and the fourth named author suggests a definition of a moduli space of LG models for essentially arbitrary potentials \cite{KKP13}.  
In another direction, it was pointed out to us by Keel that the results of \cite{Gross Keel Hacking} produce mirror families for a large class of rational surfaces which carry over many of the critical features we make use of for the case of toric mirrors.

In future work we intend to study the program suggested in this work for these rational surfaces.
In summary, we anticipate that the conjectural approach to proving HMS we outline here will find applicability for more general classes of varieties.

\subsection{Outline}
Let us briefly summarize the structure of this paper. We begin in Section~\ref{sec: toric minimal model} with a discussion of necessary background, including categorical and homological formalities and the general setup of the toric Mori program.  In Section~\ref{sec: B side}, we discuss the behavior of derived categories under variation of  GIT and the results of \cite{BFK12}, with an emphasis on the connections with Mori theory. Section~\ref{sec: A side} discusses the other side of mirror symmetry with an overview of \cite{DKK12} and general machinery for degenerations of mirror toric LG models. We conclude in Section~\ref{sec: program} with a conjectural program towards a proof of Conjecture~\ref{conj: conjecture}, and a complete execution of the program for a new example. 

\vspace{2.5mm}
\noindent \textbf{Acknowledgments:}
The authors have benefited immensely from conversations and correspondence with Denis Auroux, R. Paul Horja, Dmitri Orlov, Alexei Bondal, Kentaro Hori, Dragos Deliu, Umut Isik, Mohammed Abouzaid, Sean Keel, Dmitri Orlov, Alexander Kuznetsov, Yan Soibelman, and Maxim Kontsevich and would like to thank them all for their time, patience, and insight. 
The authors were funded by NSF DMS 0854977 FRG, NSF DMS 0600800, NSF DMS 0652633 FRG, NSF DMS 0854977, NSF DMS 0901330, FWF P 24572 N25, by FWF P20778 and by an ERC Grant. The first author was funded, in addition, by NSF DMS 0838210 RTG.
\vspace{2.5mm}

\section{Background}\label{section: Background}

\subsection{$A_{\infty}$-algebras and homological perturbation} \label{sec: perturbation}

We will assume that the reader is acquainted with the basics of $A_{\infty}$-algebras, including modules, morphisms, and derived categories. Excellent references for the material are \cite{Stasheff,Kel01,LH}. We begin by recalling the most important nontrivial equivalence relation on $A_{\infty}$-algebras.

\begin{definition}
 A morphism of $A_{\infty}$-algebras, $f: (A,m_n^A) \to (B,m^B_n)$, is called a \textbf{quasi-isomorphism} if the morphism on cohomology,
 \begin{displaymath}
  \op{H}^*(f): \op{H}^*(A,m^A_1) \to \op{H}^*(B,m^B_1),
 \end{displaymath}
 is an isomorphism.
\end{definition}

Quasi-isomorphic $A_{\infty}$-algebras have equivalent derived categories of $A_{\infty}$-modules. For a proof, see \cite{LH}.

\begin{lemma}
 If $f: (A,m_n^A) \to (B,m^B_n)$ is quasi-isomorphism, then there is an equivalence,
 \begin{displaymath}
  \op{D}(\op{perf} A) \cong \op{D}(\op{perf} B),
 \end{displaymath}
 between the derived categories of perfect modules.
\end{lemma}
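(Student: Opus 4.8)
The plan is to reduce the statement to a standard fact in the homotopy theory of $A_\infty$-algebras: a quasi-isomorphism of $A_\infty$-algebras induces a quasi-equivalence of the corresponding $A_\infty$-categories of modules, hence an equivalence after passing to any reasonable homotopy category. First I would recall that, over a field, every $A_\infty$-algebra is quasi-isomorphic to its minimal model (its cohomology $\op{H}^*(A,m_1^A)$ equipped with transferred higher products via the Homological Perturbation Lemma), and that quasi-isomorphisms of $A_\infty$-algebras over a field are invertible up to homotopy — there is a quasi-isomorphism $g : B \to A$ with $f \circ g$ and $g \circ f$ homotopic to the respective identities. This is where I would cite \cite{LH} (and \cite{Kel01}); I would not reprove it.

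Next I would set up the module categories. Given the quasi-isomorphism $f : A \to B$, restriction of scalars along $f$ gives an $A_\infty$-functor $f^* : \op{Mod}_\infty(B) \to \op{Mod}_\infty(A)$, and extension of scalars (derived tensor $- \otimes_A^{\mathbb{L}} B$, modelled via the bar construction so as to stay within the $A_\infty$ world) gives $f_! : \op{Mod}_\infty(A) \to \op{Mod}_\infty(B)$. The key step is to check that for a quasi-isomorphism the unit $M \to f^* f_! M$ and counit $f_! f^* N \to N$ are quasi-isomorphisms of $A_\infty$-modules; this follows because $f$ being a quasi-isomorphism makes $B$ a perfect $A$-module on both sides with the expected cohomology, so the relevant spectral sequences / bar-complex filtrations degenerate. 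One then observes that $f^*$ and $f_!$ preserve perfect modules — they send the free rank-one module to a perfect module and commute with shifts, cones, and retracts — so they descend to mutually inverse equivalences $\op{D}(\op{perf} A) \cong \op{D}(\op{perf} B)$ on the homotopy categories obtained by inverting quasi-isomorphisms.

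I expect the main obstacle to be purely bookkeeping: making the restriction/extension adjunction precise at the $A_\infty$ level and verifying the unit and counit are quasi-isomorphisms without drowning in bar-resolution signs. An alternative route that sidesteps some of this — and which I would mention as the cleaner option — is to invoke the minimal-model reduction directly: replace $A$ and $B$ by their minimal models, note these minimal models are then quasi-isomorphic via an $A_\infty$-quasi-isomorphism between $A_\infty$-algebras with vanishing differential, upgrade such a quasi-isomorphism to an $A_\infty$-isomorphism (invertible $m_1$-component, so strictly invertible by the standard inductive argument), and transport perfect modules along a strict isomorphism, which is trivially an equivalence. Either way the conclusion is that $\op{D}(\op{perf} A) \cong \op{D}(\op{perf} B)$, and the cited reference \cite{LH} already contains the needed machinery, so in the paper this lemma should be stated with a one-line pointer to the literature rather than a full argument.
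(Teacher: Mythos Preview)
Your proposal is correct and, in its conclusion, matches the paper exactly: the paper gives no argument whatsoever and simply writes ``For a proof, see \cite{LH}.'' Your outline of the restriction/extension-of-scalars argument and the minimal-model alternative is sound and more detailed than anything the paper provides, but your final recommendation---state the lemma with a one-line pointer to \cite{LH}---is precisely what the authors do.
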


Isomorphism classes of $A_{\infty}$-algebras can be quite large. Quasi-isomorphism classes will be even larger. We will be interested in locating a managable representative of a quasi-isomorphism equivalence class as we will need to perform full computations of the $A_{\infty}$-structure. 

One particularly useful class of $A_{\infty}$-algebras is the following. 

\begin{definition}
 An $A_{\infty}$-algebra, $(A,m_n^A)$, is called \textbf{minimal} if $m_1 = 0$. 
\end{definition}
\sidenote{{\color{red} Definition of formal?}}

Given a dg-algebra $(\widehat{A},d,m)$, let $A$ denote its cohomology algebra.  Suppose we have a collection of morphisms of vector spaces,
\begin{equation*}
 \includegraphics{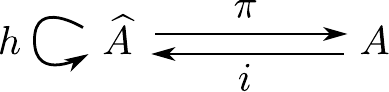}
\end{equation*}
satisfying
\[
 \pi \circ i = 1 \tand  i \circ \pi = 1-dh-hd \tand h^2 =0.
\]

Homological perturbation, see e.g. \cite{Ma04}, gives explicit higher products $m_n^A$ on $A$ and a quasi-isomorphism
between $\widehat{A}$ and $(A, m_n^A)$.  The formula for $m_n^A$  is given as sum over trees with $n$ leaves such that the valency of any internal vertex is $3$.  This is described now.

Given a tree, $T$, oriented from leaves to root with $n$ leaves, we write, $m^A_{T,n}$, for the composition of operators induced by the following rules:
\begin{itemize}
\item For each leaf compose with the map $i: A \ra  \widehat{A}$

\item Apply multiplication at every internal vertex

\item For each internal edge compose with the map $h:  \widehat{A} \ra  \widehat{A}$ 

\item Apply $\pi:  \widehat{A} \ra A$ at the root

\end{itemize}

The $m_n^A$ are given by the formula
\begin{equation}
m_n^A = \sum_T (-1)^{\vartheta(T)} m^A_{T,n}
\end{equation}
where the sum is taken over all planar trees with $n$-leaves. For the precise signs, $\vartheta(T)$, see \cite{Ma04}. We will only need the formula for $n=3$ which we display graphically.

\begin{center}
 \begin{tikzpicture}

  \node at (-6.2,0) {$m^A_3 \ =$};

  \node at (-3.5,0) {\begin{tikzpicture}[scale=0.7]
  \node[draw,circle] (a) at (0,2) {$\pi$};
  \node[draw,circle] (b) at (0,0) {$m$};
  \node[draw,circle] (c) at (-1,-1) {$h$};
  \node[draw,circle] (d) at (-2,-2) {$m$};
  \node[draw,circle] (e) at (4,-4) {$i$};
  \node[draw,circle] (f) at (0,-4) {$i$};
  \node[draw,circle] (g) at (-4,-4) {$i$};
  
  \draw (a) -- (b);
  \draw (b) -- (c);
  \draw (b) -- (e);
  \draw (c) -- (d);
  \draw (d) -- (g);
  \draw (d) -- (f);
 \end{tikzpicture}
 };
 
  \node at (0,0) {$-$};
 
  \node at (3.5,0) {\begin{tikzpicture}[scale=0.7]
  \node[draw,circle] (a) at (0,2) {$\pi$};
  \node[draw,circle] (b) at (0,0) {$m$};
  \node[draw,circle] (c) at (1,-1) {$h$};
  \node[draw,circle] (d) at (2,-2) {$m$};
  \node[draw,circle] (e) at (-4,-4) {$i$};
  \node[draw,circle] (f) at (0,-4) {$i$};
  \node[draw,circle] (g) at (4,-4) {$i$};
  
  \draw (a) -- (b);
  \draw (b) -- (c);
  \draw (b) -- (e);
  \draw (c) -- (d);
  \draw (d) -- (g);
  \draw (d) -- (f);
 \end{tikzpicture}
 };
 \end{tikzpicture}
\end{center}

In terms of equations, we have 
\begin{equation} \label{eq: m3}
 m^A_3(a,b,c) = \pi(m(h(m(i(a),i(b))),i(c))) - (-1)^{|a|} \pi(m(i(a),h(m(i(b),i(c)))))
\end{equation}
where $|a|$ is the degree of $a$.

\subsection{Exceptional collections, quivers, and equivalences}
Given a triangulated category, we often wish to describe it by decomposing it into more manageable subcategories. To this end, a common structure appearing in \cite{BFK12} and \cite{DKK12} is that of a semi-orthogonal decomposition. In this section we quickly review this notion and focus on the special case of a full exceptional collection. Standard references for the material are \cite{Bon,BK,Kel01}.

\begin{definition}\label{def:SO}
 A \textbf{semi-orthogonal decomposition} of a triangulated category, $\mathcal T$, is a sequence of full triangulated subcategories, $\mathcal A_1, \dots ,\mathcal A_m$, in $\mathcal T$ such that $\mathcal A_i \subset \mathcal A_j^{\perp}$ for $i<j$ and, for every object $T \in \mathcal T$, there exists a diagram:
 \begin{center}
 \begin{tikzpicture}[description/.style={fill=white,inner sep=2pt}]
 \matrix (m) [matrix of math nodes, row sep=1em, column sep=1.5em, text height=1.5ex, text depth=0.25ex]
 {  0 & & T_{m-1} & \cdots & T_2 & & T_1 & & T   \\
   & & & & & & & &  \\
   & A_m & & & & A_2 & & A_1 & \\ };
 \path[->,thick]
  (m-1-1) edge (m-1-3) 
  (m-1-3) edge (m-1-4)
  (m-1-4) edge (m-1-5)
  (m-1-5) edge (m-1-7)
  (m-1-7) edge (m-1-9)

  (m-1-9) edge (m-3-8)
  (m-1-7) edge (m-3-6)
  (m-1-3) edge (m-3-2)

  (m-3-8) edge node[sloped] {$ | $} (m-1-7)
  (m-3-6) edge node[sloped] {$ | $} (m-1-5) 
  (m-3-2) edge node[sloped] {$ | $} (m-1-1)
 ;
 \end{tikzpicture}
 \end{center}
 where all triangles are distinguished and $A_k \in \mathcal A_k$. We shall denote a semi-orthogonal decomposition by $\langle \mathcal A_1, \ldots, \mathcal A_m \rangle$. 
 If the subcategories in a semi-orthogonal decomposition are the essential images of fully-faithful functors, $\Upsilon_i: \mathcal A_i \to \mathcal T$, we shall also denote a semi-orthogonal decomposition by
 \begin{displaymath}
  \langle \Upsilon_1, \ldots, \Upsilon_m \rangle.
 \end{displaymath}

 Assume that $\mathcal T$ is linear over a field, $k$. Let $E_1,\ldots,E_n$ be objects of $\mathcal T$. We say that $E_1,\ldots,E_n$ is an \textbf{exceptional collection} if 
 \begin{displaymath}
  \op{Hom}_{\mathcal T}(E_i,E_i[l]) = \begin{cases} k & \text{if } l = 0 \\ 0 & \text{otherwise} \end{cases}
 \end{displaymath}
 for all $i$ and
 \begin{displaymath}
  \op{Hom}_{\mathcal T}(E_j,E_i[l]) = 0, \forall j > i, \forall l.
 \end{displaymath}
 We say that $E_1,\ldots,E_n$ is \textbf{full} if $E_1,\ldots,E_n$ generates $\mathcal T$. We say $E_1,\ldots,E_n$ is a \textbf{strong} exceptional collection if 
 \begin{displaymath}
  \op{Hom}_{\mathcal T}(E_i,E_j[l]) = 0, l \not = 0, \forall i,j.
 \end{displaymath}
\end{definition}

Possession of a full exceptional collection allows for a simplified presentation of an (enhanced) triangulated category. Let $E_1,\ldots,E_n$ be a full exceptional collection in $\mathcal T$. Then to $E_1,\ldots,E_n$ we can associate a graded quiver with relations, $(Q,R)$. The vertex set of $Q$ is the set $\{1,2,\ldots,n\}$. Take a minimal set, $S$, of generators for the graded endomorphism algebra
\begin{displaymath}
 A = \bigoplus_{l \in \Z} \op{Hom}_{\mathcal T} (\oplus_{j=1}^n E_j, \oplus_{j=1}^n E_j[l])
\end{displaymath}
consisting of elements of the morphism spaces $\op{Hom}_{\mathcal T}(E_i, E_j[l])$. The arrows from $i$ to $j$ in $Q$ will be the elements of $S$ lying in $\op{Hom}_{\mathcal T}(E_i, E_j[l])$ for some $l \in \Z$. The degree of the arrow will be $l$. 

To determine $R$, we note that, as we used a set of generators for $A$, there is a natural surjection
\begin{displaymath}
 \pi: \C Q \to A
\end{displaymath}
from the path algebra of $Q$ to $A$. The ideal of relations, $R$, is the kernel of $\pi$.

One would hope that $\mathcal T$ is equivalent to the derived category of $(Q,R)$ or equivalently to the derived category of graded $A$-modules. However, this hope is too naive. One must assume that $\mathcal T$ possesses some additional structure, an enhancement \cite{BK}, to get something close to this. 

\begin{proposition}
 If $\mathcal T$ possesses an enhancement, then $A$ can be given the structure of a minimal $A_{\infty}$-algebra, also denoted by $A$, such that there is an equivalence
\begin{displaymath}
 \mathcal T \cong \op{D}(\op{perf} A)
\end{displaymath}
 between $\mathcal T$ and the derived category of perfect $A_{\infty}$-modules over the $A_{\infty}$-algebra, $A$.
\end{proposition}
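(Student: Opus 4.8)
The plan is to combine the homological-perturbation formalism of Section~\ref{sec: perturbation} with a derived Morita-type argument. Fix the enhancement, i.e. a pretriangulated dg-category $\mathcal{A}$ with $\op{H}^0(\mathcal{A}) \cong \mathcal{T}$ (if the given enhancement is $A_\infty$ rather than dg, first pass to a quasi-equivalent dg-model, which changes nothing). Inside $\mathcal{A}$, replace each $E_j$ by an h-injective (fibrant) representative, so that for the object $G := \bigoplus_{j=1}^n E_j$ the dg-algebra of endomorphisms $\widehat{A} := \mathcal{A}(G,G)$ has cohomology $\op{H}^*(\widehat{A}) \cong A$ as graded algebras, where $A = \bigoplus_{l \in \Z}\op{Hom}_{\mathcal{T}}(G, G[l])$ is the graded endomorphism algebra of the excerpt.

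First I would produce the minimal $A_\infty$-structure. Since we work over a field $k$, the cohomology $A = \op{H}^*(\widehat{A})$ splits off as a subcomplex, so there is contraction data $(\pi, i, h)$ with $\pi i = 1$, $i\pi = 1-dh-hd$, $h^2 = 0$ exactly as in Section~\ref{sec: perturbation}. Homological perturbation then equips $A$ with higher products $m_n^A$ with $m_1^A = 0$, so $(A, m_n^A)$ is minimal, together with a quasi-isomorphism of $A_\infty$-algebras $i\colon (A, m_n^A) \to \widehat{A}$; here $m_2^A$ recovers the Yoneda product used to define the quiver $Q$, and the higher $m_n^A$ encode the Massey products, hence the relations $R$. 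By the Lemma of the excerpt, the quasi-isomorphism $i$ induces an equivalence $\op{D}(\op{perf} A) \cong \op{D}(\op{perf} \widehat{A})$.

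It then remains to identify $\op{D}(\op{perf} \widehat{A})$ with $\mathcal{T}$. Consider the derived Yoneda functor $\mathcal{A}(G, -)\colon \mathcal{T} \to \op{D}(\widehat{A})$ into the derived category of right dg-$\widehat{A}$-modules. It is cohomologically fully faithful on $G$ and its shifts, since $\op{H}^*\mathcal{A}(G, G[l]) = \op{H}^*(\widehat{A}[l])$ by construction, and it sends $G$ to the free module $\widehat{A}$. Because $E_1,\ldots,E_n$ is a \emph{full} exceptional collection, $G$ classically generates $\mathcal{T}$, and finiteness of the $\op{Hom}$-spaces (automatic once an exceptional collection is present) forces the image to land in perfect modules; the standard derived Morita argument then upgrades this to an equivalence $\mathcal{A}(G,-)\colon \mathcal{T} \xrightarrow{\sim} \op{D}(\op{perf} \widehat{A})$. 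Composing with the equivalence of the previous paragraph yields $\mathcal{T} \cong \op{D}(\op{perf} \widehat{A}) \cong \op{D}(\op{perf} A)$.

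I expect the last step to be the main obstacle: setting up the enhancement concretely enough that $\widehat{A}=\mathcal{A}(G,G)$ genuinely computes the Ext-algebra, and then invoking the theorem that $\op{D}(\op{perf} \widehat{A}) \simeq \mathcal{T}$ for a dg-enhanced category with a single classical generator $G$. This is exactly Keller's theorem on derived categories of dg-algebras, and the enhancement hypothesis is precisely what makes it applicable; the remaining points (fibrant replacements, perfectness/compactness of $G$, idempotent completeness of $\mathcal{T}$, and uniqueness of the minimal model up to $A_\infty$-isomorphism) are routine.
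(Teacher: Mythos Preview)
Your proposal is correct and follows precisely the standard route: transfer the $A_\infty$-structure to cohomology via homological perturbation (giving the minimal model), and then invoke Keller's derived Morita theorem for a classical generator of an enhanced triangulated category. The paper itself does not supply a proof but simply refers the reader to \cite{Bon,Kel94}, which is exactly the argument you have outlined; so there is nothing to compare beyond noting that you have unpacked the cited references.
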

\noindent For a proof of the proposition see \cite{Bon,Kel94}.

The above proposition allows us to determine $\mathcal T$ up to equivalence from a finite set of data: the graded quiver $Q$, the relations, $R$, and the higher $A_{\infty}$ maps,
\begin{displaymath}
 m^A_n: A^{\otimes n} \to A, \ n \geq 3.
\end{displaymath}

Let us recall a well-known example: the Beilinson exceptional collection \cite{Bei}
\begin{displaymath}
 \mathcal O_{\mathbb{P}^n}, \mathcal O_{\mathbb{P}^n}(1),\ldots, \mathcal O_{\mathbb{P}^n}(n)
\end{displaymath}
is a strong full exceptional collection for the derived category $\dbcoh{\mathbb{P}^n}$. The graded quiver we get is pictured in Figure \ref{fig: Beilinson quiver}. All arrows are in degree $0$. 

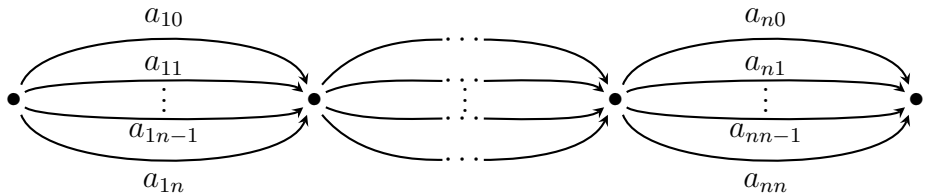
\begin{figure}[ht]
\begin{center}
\begin{tikzpicture}[scale=1,level/.style={->,>=stealth,thick}]
	\node (a) at (-6,0) {$\bullet$};
	\node (b) at (-4,.1) {$\vdots$};
	\node (c) at (-2,0) {$\bullet$};
	\node at (0,.1) {$\vdots$};
	\node at (0,.25) {$\cdots$};
	\node at (0,.8) {$\cdots$};
	\node at (0,-.25) {$\cdots$};
	\node at (0,-.8) {$\cdots$};
	\node (d) at (2,0) {$\bullet$};
	\node (e) at (4,.1) {$\vdots$};
	\node (f) at (6,0) {$\bullet$};
	\draw[level] (-5.85,.1) .. controls (-5.6,.3) and (-2.4,.3)  ..  (-2.15,.1) node at (-4,.45) {$a_{11}$};
	\draw[level] (-5.85,-.1) .. controls (-5.6,-.3) and (-2.4,-.3)  ..  (-2.15,-.1) node at (-4,-.45) {$a_{1n-1}$};
	\draw[level] (-5.9,.2) .. controls (-5.6,1) and (-2.4,1) .. (-2.1,.2) node at (-4,1.1) {$a_{10}$};
	\draw[level] (-5.9,-.2) .. controls (-5.6,-1) and (-2.4,-1) .. (-2.1,-.2) node at (-4,-1.1) {$a_{1n}$};
	
	\draw[level] (2.15,.1) .. controls (2.4,.3) and (5.6,.3)  ..  (5.85,.1) node at (4,.45) {$a_{n1}$};
	\draw[level] (2.15,-.1) .. controls (2.4,-.3) and (5.6,-.3)  ..  (5.85,-.1) node at (4,-.45) {$a_{nn-1}$} ;
	\draw[level] (2.1,.2) .. controls (2.4,1) and (5.6,1) .. (5.9,.2) node at (4,1.1) {$a_{n0}$} ;
	\draw[level] (2.1,-.2) .. controls (2.4,-1) and (5.6,-1) .. (5.9,-.2) node at (4,-1.1) {$a_{nn}$};
	
	\draw[thick] (-1.85,.1) .. controls (-1.4,.3) and (-0.6,.25)  ..  (-.3,.25);
	\draw[level] (.25,.25) .. controls (0.4,.25) and (1.6,.3)  ..  (1.85,.1);
	
	\draw[thick] (-1.85,-.1) .. controls (-1.4,-.3) and (-0.6,-.25)  ..  (-.3,-.25);
	\draw[level] (.25,-.25) .. controls (0.4,-.25) and (1.6,-.3)  ..  (1.85,-.1);
	
	\draw[thick] (-1.9,.2) .. controls (-1.4,.8) and (-0.6,.8)  ..  (-.3,.8);
	\draw[level] (.25,.8) .. controls (0.4,.8) and (1.6,.8)  ..  (1.9,.2);
	
	\draw[thick] (-1.9,-.2) .. controls (-1.4,-.8) and (-0.6,-.8)  ..  (-.3,-.8);
	\draw[level] (.25,-.8) .. controls (0.4,-.8) and (1.6,-.8)  ..  (1.9,-.2);

\end{tikzpicture}
\end{center}
\caption{Beilinson quiver}
\label{fig: Beilinson quiver}
\end{figure}

The ideal of relations is generated by
\begin{displaymath}
 a_{i+1j}a_{ik} - a_{i+1k}a_{ij}
\end{displaymath}
for $0 \leq i \leq n-1$ and $0 \leq j,k \leq n$. It expresses commutativity of the polynomial ring. Finally, the higher $A_{\infty}$-structure maps are trivial for degree reasons. 

Thus, whenever we can find an enhanced triangulated category, $\mathcal T$, and $(n+1)$-objects, $T_0,\ldots,T_n$ of $\mathcal T$ such that the endomorphism algebra of $\oplus_{i=0}^n T_i$ is the path algebra of the Beilinson quiver modulo the commutativity relations, we automatically get a fully-faithful functor,
\begin{displaymath}
 \dbcoh{\mathbb{P}^n} \to \mathcal T.
\end{displaymath}
If we can show that $\mathcal T$ generates, we will get an equivalence. Note that this phenomenon explains why the quivers from Figures~\ref{fig: Hirzebruch Quiver} and \ref{fig: Projective Quiver} yield the fully-faithful functor $\op{MK}_n$ in \eqref{eqn: MK}.

%

\subsection{The toric Mori program} \label{sec: toric minimal model}

In this section, we briefly review the toric Mori program, also called the toric minimal model program, which originated in the work of Reid \cite{Reid83}; standard references are the books of Cox, Little, Schenck \cite[Chapters 14-15]{CLS} and Matsuki \cite[Chapter 14]{Matsuki02}. The subject has interpretations in terms of birational geometry, variation of Geometric Invariant Theory (VGIT), and combinatorial modifications of toric fans. It can be developed in each language in isolation. 

We will focus mainly on VGIT. We choose this direction for its immediate relation with the work of \cite{BFK12} and for its simplicity of presentation. For the birational geometer interested in comparing our presentation through VGIT to their more familiar perspective of flips, flops, and Mori contractions, we recommend reviewing \cite{HK2}. For those more comfortable with the combinatorial viewpoint, we recommend \cite{CLS}. However, to make connections with \cite{DKK12}, we will also need to recall some combinatorial properties. We begin by recalling the general setup of VGIT \cite{DH98,Tha96}. 

Let $G$ be a linearly reductive group acting on a variety $X$ which is equipped with a $G$-equivariant line bundle $\mathcal L$. Following Mumford \cite{MFK}, we consider three subsets of $X$:
\begin{definition} \label{definition: semi-stable,stable,unstable loci}
\begin{align*}
 X^{\op{ss}}(\mathcal L) & = \{x \in X \ | \ \exists f \in \op{H}^0(X,\mathcal L^n)^G \text{ with } n > 0, f(x) \not = 0, \tand X_f \text{ affine} \} \\
 X^{\op{s}}(\mathcal L) & = \{x \in X^{\op{ss}}(\mathcal L) \ | \ G \cdot x \text{ is closed in } X^{\op{ss}}(\mathcal L) \text{ and }G_x \text{ is finite}\} \\
 X^{\op{us}}(\mathcal L) & = X \setminus X^{\op{ss}}(\mathcal L).
\end{align*}
 where $X_f = \{x\in X \ | \ f(x) \neq 0\}$. The subsets $X^{\op{ss}}(\mathcal L), X^{\op{s}}(\mathcal L)$, and $X^{\op{us}}(\mathcal L)$, are called the \textbf{semi-stable, stable}, and \textbf{unstable locus}, respectively, of $X$. Each subset is naturally a subvariety of $X$.
\end{definition}

\begin{definition} \label{definition: GIT quotient}
With notation as above, the \textbf{GIT quotient} of $X$ by $G$ with respect to $\mathcal L$ is the quotient stack, $[X^{\op{ss}}(\mathcal L)/G]$. We denote the GIT quotient by $X \modmod{\mathcal L} G$. 
\end{definition}

The reader should note that this is {\em not} the definition of a GIT quotient given in \cite{MFK}, as we work with stacks.  Indeed, the GIT quotient appearing in \cite{MFK} is the coarse moduli space of the GIT quotient appearing here, assuming that the stack is Deligne-Mumford. 

The GIT quotient depends on the choice of $\mathcal L$. Exploring the relationship between the quotient stacks resulting from different choices of $\mathcal L$ has deep ties with birational geometry. Let $\op{Pic}^G(X)$ be the group of $G$-equivariant line bundles on $X$. The associated vector space, $\op{Pic}^G_{\R}(X)$, naturally forms a parameter space for GIT quotients. We say that $\mathcal L$ is \textbf{$G$-effective} if $\mathcal L$ is ample and $X^{\op{ss}}(\mathcal L) \not = \emptyset$ and we let $\op{Eff}^G_{\R}(X)$ denote the cone of $G$-effective equivariant line bundles.

\begin{definition}
 The \textbf{GIT fan} for the action of $G$ on $X$ is a fan in $\op{Pic}^G_{\R}(X)$ whose support is $\op{Eff}^G_{\R}(X)$. The GIT fan is the unique fan characterized by the following property: the interiors of the cones of the GIT fan are exactly the subsets of constant semi-stable locus.
\end{definition}

In general, the GIT fan may not exist. However, in many cases, it does. For example, if $X$ is proper \cite{DH98,Tha96,Res} it exists. This implies that it exists for linear $G$-actions on $X = \mathbb{A}^n$ since we can compactify $\mathbb{A}^n$ to $\mathbb{P}^n$ and extend the action correspondingly. 

We now leave the general setting of GIT to focus on toric setting: $X=\mathbb{A}^n$ and $G$ is a subset of $\mathbb{G}_m^n$. Affine space, $\mathbb{A}^n$, has trivial Picard group so the set of $G$-equivariant line bundles is in bijection with characters of the group $G$, denoted by $\widehat{G}$. Given a character, $\chi$, of $G$, we denote the corresponding $G$-equivariant line bundle by $\mathcal O(\chi)$ and the corresponding GIT quotient by $\mathbb{A}^n \modmod{\chi} G$.

\begin{definition}
 When $G \subseteq\mathbb{G}_m^n$ is a subgroup of an algebraic torus, we shall call a GIT quotient, $\mathbb{A}^n \modmod{\chi} G$, of affine space $\mathbb{A}^n$ by $G$, a \textbf{toric stack}. We will say that the quotient is \textbf{projective} if $\C[x_1,\ldots,x_n]^G = \C$. If the quotient is represented by a scheme, we shall call the quotient a \textbf{toric variety}.
\end{definition}

\begin{remark}
 We have already mentioned that the GIT fan for toric actions on affine space exists. It is straightforward to see that a toric stack is Deligne-Mumford if and only if $\chi$ lies in the interior of a top dimensional cone of the GIT fan. This is equivalent to the underlying coarse toric variety being simplicial.
\end{remark}

\begin{remark}
 Note that, in general, a GIT quotient of affine space is automatically projective over an affine variety. In particular, if the quotient is proper, then it is projective; this occurs when $\C[x_1,\ldots,x_n]^G = \C$.  However, the most common definition of a toric stack proceeds through a stacky fan, as in \cite{BCS} or the more general construction in \cite{GS}. These approaches also include non-projective toric stacks. At least in the case of varieties, our more restrictive definition has appeared elsewhere in the literature, see \cite{MS} as an example. 
\end{remark}


GIT fans for toric varieties have deeper roots. For toric varieties, when $\mathbb{A}^n$ is the spectrum of the Cox ring, the GIT fan was first considered by Oda-Park \cite{odapark}, motivated by a dual construction for polytopes introduced by Gel'fand, Kapranov, and Zelevinsky \cite{GKZ}. As such, it often goes by other names, including the \textbf{GKZ fan} or \textbf{secondary fan}, and it admits a purely combinatorial description, again see \cite{CLS,Matsuki02} for details.

We briefly recall the structure of the GKZ fan for toric quotients, again see \cite{DH98,Tha96} for a treatment of the general case. The inclusion, $G \subset \mathbb{G}_m^n$ yields an exact sequence of abelian groups 
\begin{equation} \label{eqn: characters exact sequence}
 0 \to M \overset{\delta}{\to} \mathbb{Z}^n \overset{\gamma}{\to} \widehat{G} \to 0,
\end{equation}
of abelian groups, where $M$ denotes the kernel of $\gamma$. Tensoring with $\mathbb{R}$, we obtain a sequence of vector spaces \begin{equation*}
 0 \to M_{\mathbb{R}} \to \mathbb{R}^n \overset{\gamma_{\mathbb{R}}}{\to} \widehat{G}_{\mathbb{R}} \to 0.
\end{equation*} Let $\beta_i := \gamma(e_i)$ with $e_1,\ldots,e_n$ the standard basis of $\mathbb{Z}^n$.  Let $C_{\beta}$ be the cone in $\widehat{G}_{\mathbb{R}}$ spanned by $\beta_1,\ldots,\beta_n$. It is easy to check that the characters in $C_{\beta}$ are exactly those whose corresponding GIT quotients are non-empty.

\begin{remark} In addition to the combinatorial descriptions, this fan also admits descriptions in terms of birational geometry. Recall that we can identify $\widehat{G}_{\mathbb{R}}$ with $\op{Pic}_{\R}(X)$ when $X$ is a projective simplicial toric variety representing the corresponding quotient. In turn, $\op{Pic}_{\R}(X)$ coincides in this case with $N^{1}(X)$, the space of divisors modulo numerical equivalence. The cone $C_\beta$ then coincides with the cone of effective divisors $\op{Eff}(X)$. This cone admits a fan called the Mori fan, where  divisors in the relative interior of the same chamber define the same rational contraction; see e.g. \cite{HK2} for a thorough discussion which in particular proves the equivalence of this fan with the GIT fan. We will occasionally use the standard facts that the cone $\op{Amp}(X)\subset N^{1}(X)$ of ample divisors is the relative interior of the closed cone of nef divisors $\op{Nef}(X)$, that the relative interior of 
the cone $\op{Eff}(X)$ is the cone of big divisors $\op{Big}(X)$, and that in the toric case $\op{Eff}(X)$ is closed. 
\end{remark}


The maximal cones in the GIT fan are called {\bf{chambers}}, and the codimension $1$ cones are called {\bf{walls}}.  
We say that two smooth toric projective DM stacks are {\bf{neighbors}}, if there is a GKZ fan $\mathcal F_{\op{GKZ}}$ such that the two stacks are GIT quotients corresponding to adjacent chambers separated by a wall. Motivated by string theory terminology, given a fixed GKZ fan, if $X$ occurs as a GIT quotient corresponding to some maximal chamber, we say $X$ is a {\bf phase} of $\mathcal F_{\op{GKZ}}$. 

\sidenote{{\color{red} Using $\Sigma$ for the fan and the maximal cones is a bit confusing.} Changed to $\mathcal F$ {\color{red}Should we have a $\tau$ subscript on $W$ and $\mu$?} Naw.}

Now, let $C_+$ and $C_-$ be two adjacent chambers in a GKZ fan. Let $\tau$ be the wall defined by the intersection of $C_+$ and $C_-$. The wall $\tau$ spans a linear hyperplane, $H$. Let $\lambda: \mathbb{G}_m \to G$ be a primitive one-parameter subgroup whose kernel, once tensored with $\mathbb{R}$, defines $H$. Furthermore, choose $\lambda$ so that the pairing of $\lambda$ with any character in $C_+$ is nonnegative. Set 
\begin{displaymath}
 G_{\lambda} := G/\lambda(\mathbb{G}_m).
\end{displaymath}
Let $\alpha$ be a character in the interior of $\tau$. Finally, consider the GIT quotient 
\begin{displaymath}
 W := (\mathbb{A}^n)^{\lambda} \modmod{\alpha} G_{\lambda}
\end{displaymath}
of the fixed locus of $\lambda$ by $G_{\lambda}$ via $\mathcal O(\alpha)$. 

Set 
\begin{displaymath}
 \mu := -\langle \lambda, K_{[\mathbb{A}^n/G]} \rangle
\end{displaymath}
where $K_{[\mathbb{A}^n/G]} = -\sum_i \beta_i$.

\begin{definition}
 For a wall $\tau$ separating adjacent chambers $C_+$ and $C_-$ in a GIT fan, we call $(W, \mu)$ the {\bf{wall data}} associated to $\tau$.
\end{definition}


We are interested not in only in individual walls, but sequences of walls and the corresponding sequences of maximal chambers. Of particular importance are sequences which comes from a sequence of Mori operations (i.e. flops, directed flips, divisorial contractions, and Mori fibrations). To this end, fix a projective toric DM stack $X$ and consider a continuous path $\gamma: [0,1] \to \widehat{G}_\mathbb{R}$ such that:
 \begin{itemize}\item$\gamma(0)$ lies in the ample cone of $X$,
  \item$\gamma(1)$ lies outside the support of the GKZ fan,
   \item$\gamma$ does not pass through any cones in the GKZ fan of codimension 2.
   \end{itemize} 
 For any $0 < t < 1$, if $\gamma(t)$ lies in a wall, then we can consider the corresponding adjacent chambers $C_+$ and $C_-$ where $\gamma(t-\epsilon) \in C_-$ and $\gamma(t + \epsilon) \in C_+$ for sufficiently small $\epsilon$.  This allows us to define a sequence of wall data along the path.
\begin{definition}
A path, $\gamma$, as above, is called a {\bf{run of the toric Mori program}} if whenever $\gamma$ intersects a wall $\tau$ one has  $\mu \leq 0$ for the wall data $(W, \mu)$. If $\gamma$ is run of the toric Mori program and is a line segment with respect to the Euclidean topology on $\widehat{G}_\mathbb{R}$ we call $\gamma$ a {\bf straight line run}.
\end{definition}

We will frequently make use of the following elementary observation, which essentially states that any phase can be reached by a straight line run. 

\begin{lemma}\label{lemma: straight} Let $X$ be a projective toric DM stack with $GKZ$ fan $\mathcal F_{\op{GKZ}}$.  Let $Y$ be any phase of $\mathcal F_{\op{GKZ}}$ with corresponding chamber $C_Y$. Fix any character $\chi$ in the ample cone of $X$.  There is a line segment $\gamma$  with respect to the Euclidean topology on $\widehat{G}_\mathbb{R}$ satisfying the three conditions above such that $\gamma (0)=\chi$ and $\gamma (a)$ lies in $C_Y$ for some $0 < a < 1$. \end{lemma}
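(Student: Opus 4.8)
The plan is to reduce the statement to a general-position argument about fans. Fix the ample cone $\op{Amp}(X)$, an open cone in $\widehat{G}_\mathbb{R}$ containing $\chi$, and fix any point $\chi'$ in the interior of the chamber $C_Y$. Since $\op{Eff}^G_\mathbb{R}(X)$ is a full-dimensional cone (it equals $\op{Eff}(X)$ in the relevant identification, and is the support of $\mathcal F_{\op{GKZ}}$), there is a point $p$ lying outside $\op{Eff}^G_\mathbb{R}(X)$; I would in fact take $p$ on the ray through $\chi'$ just past the boundary of the effective cone, so that the open segment from $\chi'$ to $p$ crosses the relative interior of exactly one wall (the one on the boundary of $C_Y$ "facing outward"). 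The naive straight segment from $\chi$ to $p$ may of course pass through codimension-$2$ faces of $\mathcal F_{\op{GKZ}}$, so the real content is a perturbation argument.

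First I would note that the set of segments $[\chi, q]$, as $q$ ranges over a small open ball $B$ around $p$ (chosen so $B$ still lies outside the effective cone), sweeps out an open region, and the "bad" segments — those meeting the union $\Sigma^{(\le n-2)}$ of all cones of $\mathcal F_{\op{GKZ}}$ of codimension $\ge 2$ — form a measure-zero (indeed, lower-dimensional) subset of $B$. Concretely: for each codimension-$\ge 2$ cone $\sigma$, the set of endpoints $q$ for which $[\chi,q]$ meets $\sigma$ is the image under a projection-from-$\chi$ map of $\sigma$, which has dimension $\le n-2 < n-1 = \dim B$ when $\chi\notin\sigma$ (and $\chi$, being in the open ample cone, lies in no proper face). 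There are finitely many such $\sigma$, so a generic $q \in B$ gives a segment $\gamma$ from $\chi$ to $q$ avoiding $\Sigma^{(\le n-2)}$. This $\gamma$ satisfies the second and third bulleted conditions by construction (its endpoint is outside the support of the fan), and the first since $\gamma(0)=\chi\in\op{Amp}(X)$.

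It remains to check that such a generic $\gamma$ actually passes through $C_Y$. Here I would argue by continuity/connectedness: the undeformed segment $[\chi,\chi']$ ends at the interior point $\chi'\in C_Y$, and for $q$ sufficiently close to $\chi'$ the endpoint $q$ still lies in the open cone $\op{int}(C_Y)$ (since $C_Y$ has nonempty interior as $Y$ is a DM stack phase), so $\gamma(1)\in C_Y$ — wait, but we also need $\gamma(1)$ outside the support; so instead take the endpoint slightly beyond $\chi'$ along the outward direction as above, and observe that the segment must then cross $\op{int}(C_Y)$ on its way out, since $C_Y$ separates $\chi'$ (deep inside) from the exterior and $\gamma$ avoids all codimension-$\ge 2$ strata (so it can only leave $C_Y$ through the relative interior of a wall). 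Thus $\gamma(a)\in\op{int}(C_Y)$ for a range of $a<1$, giving the claim. I expect the main obstacle to be the bookkeeping in this last paragraph — making precise that a segment avoiding the codimension-$2$ skeleton and running from the ample cone out through the boundary of the effective cone must traverse the prescribed chamber $C_Y$ — which amounts to a clean statement that, along such a generic segment, the sequence of chambers encountered is a connected walk in the dual graph of $\mathcal F_{\op{GKZ}}$ and one can steer the exit point into any chosen boundary chamber. Everything else is the standard "generic line avoids a finite union of low-dimensional sets" transversality observation.
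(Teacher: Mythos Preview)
Your approach is correct and is essentially the same transversality/dimension-count argument the paper gives: the set of line segments from $\chi$ meeting the interior of $C_Y$ is open (nonempty since $C_Y$ is top-dimensional), while the set meeting a codimension-$\ge 2$ cone is a finite union of lower-dimensional loci, so a generic segment works.

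The only difference is bookkeeping. The paper parametrizes candidate segments by their \emph{direction}, via a small sphere $S_\epsilon(\chi)$ inside the ample cone, rather than by their endpoint. This makes the last step you worried about disappear: the condition ``the ray from $\chi$ in direction $\alpha$ meets $\op{int}(C_Y)$'' is manifestly an open condition on $\alpha\in S_\epsilon(\chi)$, and strong convexity of the effective cone guarantees every such ray eventually exits the support. So there is no need for your separate argument that the perturbed segment still traverses $C_Y$; in your setup the clean way to say it is simply that ``$[\chi,q]$ meets $\op{int}(C_Y)$'' is an open condition on $q$, satisfied at $q=p$ since $[\chi,p]$ passes through $\chi'$, hence persists under small perturbation. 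The dual-graph walk and chamber-steering remarks are unnecessary.
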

\begin{proof}
Fix the Euclidean metric on $\widehat{G}_{\mathbb{R}}$ and take $\epsilon > 0$ sufficiently small so that the sphere $S_{\epsilon}(\chi)$ of radius $\epsilon$ centered at $\chi$ is contained in the ample cone. Any point $\alpha\in S_{\epsilon}(\chi)$ determines a unique ray emanating from $\chi$.  Consider the collection of such points which determine a ray which eventually passes outside the pseudo-effective cone.  As $X$ is projective, the psedo-effective cone is strongly convex.  Hence, the collection of all such points with a ray hitting the interior of $C_Y$ is a non-empty open subset of the the sphere as $C_Y$ is of maximum dimension. However, the locus of all points whose ray intersects a codimension $2$ cone of $\mathcal F_{\op{GKZ}}$ is a cell complex of real codimension at least 1. 
\end{proof}

As discussed in the introduction, one of the aims of this paper is to study Homological Mirror Symmetry for smooth toric DM stacks which are not Fano (or nef-Fano). This following simple observation lets us always reach this case by a straight line run.  

\begin{proposition}\label{prop: existence}
 Let $X$ be a projective toric DM stack. Then there exists a nef-Fano projective toric DM stack $X'$ and a straight line run $\gamma $ in the GKZ fan of $X$ which starts at the phase $X'$ and passes through the phase $X$ and its way to the exterior of the pseudo-effective cone. Moreover, if the anticanonical divisor $-K_X$ is contained in the relative interior of a maximal chamber of the GKZ fan of $X$, then $X'$ may be taken to be Fano. 
\end{proposition}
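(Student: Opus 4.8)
The plan is to work entirely on the level of the GKZ (secondary) fan of $X$, using the fact established in the preceding remarks that this fan lives in $\widehat{G}_{\mathbb{R}} \cong N^1(X)$, that its support is the (closed, strongly convex, since $X$ is projective) pseudo-effective cone $\op{Eff}(X)$, and that the anticanonical class $-K_{[\mathbb{A}^n/G]} = \sum_i \beta_i$ is a well-defined point of $\widehat{G}_{\mathbb{R}}$ which is \emph{big}, i.e. lies in the interior of $\op{Eff}(X)$ (each $\beta_i \neq 0$ by projectivity, so $\sum_i \beta_i$ is interior to the cone they span). The point $-K$ therefore lies in the relative interior of some chamber $C'$ of the GKZ fan (if $-K$ happens to lie only on a wall, perturb and work in a maximal chamber adjacent to it; this is exactly where the Fano vs.\ nef-Fano dichotomy enters), and I let $X'$ be the corresponding phase. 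Since $-K$ is ample on $X'$ by construction, $X'$ is Fano when $-K$ is in the chamber interior and nef-Fano in general — here one must check that a toric DM stack whose canonical class is ample for its own GIT chamber is nef-Fano, which follows since ampleness of $-K_{[\mathbb{A}^n/G]}$ descends to bigness and nefness of $-K_{X'}$ under the quotient, and the coarse space is $\mathbb{Q}$-factorial because $X'$ is simplicial.

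Next I would produce the straight line run. Fix a character $\chi'$ in the ample cone of $X'$ (e.g.\ $\chi' = -K$ itself or a small perturbation). By Lemma~\ref{lemma: straight}, applied with $X'$ in the role of ``$X$'' and $X$ in the role of the target phase ``$Y$'', there is a line segment $\gamma$ in $\widehat{G}_{\mathbb{R}}$ starting at $\chi'$, avoiding all codimension-$2$ cones of the GKZ fan, which passes through the chamber $C_X$ of $X$ and — by the same argument, using strong convexity of $\op{Eff}(X)$ — exits the pseudo-effective cone. The only point needing care is that Lemma~\ref{lemma: straight} as stated guarantees $\gamma(0) = \chi'$ and $\gamma(a) \in C_Y$ for some $a \in (0,1)$ but does not by itself assert that $\gamma$ leaves $\op{Eff}(X)$ before $t=1$; however the construction in the proof of that lemma chooses rays that eventually pass outside the pseudo-effective cone, so after reparametrizing the segment so that its endpoint $\gamma(1)$ is taken just beyond the exit point, all three defining conditions of a path are met and the chamber $C_X$ still lies strictly between $\gamma(0)$ and $\gamma(1)$.

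Finally I must check that $\gamma$ is genuinely a \emph{run of the toric Mori program}, i.e.\ that at every wall $\tau$ crossed one has $\mu = -\langle \lambda, K_{[\mathbb{A}^n/G]}\rangle \le 0$ for the wall data, where $\lambda$ is oriented so that it pairs nonnegatively with $C_+$ (the chamber on the far side, in the direction of travel away from $\chi'$). Since the segment emanates from $\chi' \approx -K$ which lies in the interior of $\op{Eff}(X)$, and travels outward to the boundary, the anticanonical class $-K = \sum_i\beta_i$ sits ``behind'' each wall relative to the direction of motion; concretely, for each wall crossed, $-K$ lies on the $C_-$ side (the side containing $\chi'$) or on the wall, so $\langle \lambda, -K\rangle \le 0$, hence $\mu = \langle \lambda, -K\rangle \le 0$. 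This is where the choice of starting point as (a perturbation of) $-K$ is essential and is the crux of the argument: it is exactly the statement that decreasing a divisor class from $-K$ toward the boundary of the effective cone is a $K$-MMP. One slightly delicate point to verify is the orientation convention and the case $\mu = 0$ (a flop), which is permitted; and one should confirm that the hyperplane $H$ spanned by $\tau$ does not contain $-K$ except in the flop case — if it did for a non-flop wall the inequality would be violated, but this cannot happen precisely because $-K$ is in the interior of $\op{Eff}(X)$ and lies strictly on the $\chi'$-side of every wall the segment crosses after leaving the chamber of $X'$. Assembling these observations gives the desired straight line run, proving the proposition, with the refinement to a genuine Fano $X'$ following whenever $-K$ avoids all walls.
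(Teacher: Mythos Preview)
Your proposal is correct and follows essentially the same route as the paper: pick a maximal chamber $C_{-K}$ containing $-K$, let $X'$ be the corresponding phase, invoke Lemma~\ref{lemma: straight} to obtain a line segment from the interior of $C_{-K}$ through the chamber of $X$ and out of the effective cone, and verify $\mu \le 0$ at every wall by noting that $-K$ lies on the $C_-$ side of each hyperplane crossed (since the affine function $t \mapsto \langle \lambda, \gamma(t)\rangle$ is increasing and vanishes at the wall). Your explicit choice $\chi' = -K$ (or a sufficiently small perturbation when $-K$ sits on a wall) is in fact slightly more careful than the paper's phrasing, and is exactly what makes the inequality $\langle \lambda, -K\rangle \le 0$ immediate rather than requiring an auxiliary claim about the sign of $\langle \lambda, \cdot\rangle$ on all of $C_{-K}$.
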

\begin{proof}
Let $C_{-K}$ be any chamber which contains $-K_X$ (possibly in its boundary), and let $X'$ be GIT quotient corresponding to this chamber. By the lemma above, let $\gamma$ be a line segment starting in the relative interior of $C_{-K}$, passing through the ample cone of $X$, and eventually exiting the pseudo-effective cone.

 To see that this path is a run of the toric Mori program, consider a value $0 < a <1$ such that $\gamma(a)$ lies in a wall of the GKZ fan of $X$.  By definition, 
\[  \mu = -\langle \lambda, K_{[\mathbb{A}^n/G]} \rangle \]
where $\lambda$ is a one-parameter subgroup with $\langle \lambda,  \gamma(a + \epsilon) \rangle > 0$ and   $\langle \lambda,  \gamma(a -\epsilon) \rangle < 0$ for some sufficiently small value of $\epsilon >0$.  Now, since our path is a straight line we have $\langle \lambda, \gamma(a -\epsilon) \rangle < 0$ for any $\epsilon > 0$.  In particular, $\langle \lambda, \gamma(0) \rangle < 0$ and remains negative on the relative interior of $C_{-K}$.  By continuity $\langle \lambda,  v \rangle \leq 0$ for any $v \in C_{-K}$.  Hence $\mu = -\langle \lambda, K_{[\mathbb{A}^n/G]} \rangle \leq 0$ as desired.
 
Recall that the anticanonical divisor on a projective toric variety is automatically effective and big.  To see that it is also nef, notice that the GKZ fan of $X'$ is a projection of the GKZ fan of $X$ (possibly the identity projection), and the nef cone of $X'$ is the image of $C_{-K}$ under this projection
, from which the proposition immediately follows.\end{proof}

When we consider toric mirror symmetry, it will be natural to consider an enlargement of the GKZ fan which is complete. The following proposition is well-known. 

\begin{proposition}\label{prop: enlarged} Let $X$ be a projective toric DM stack. Then the GKZ fan of $X$ is naturally a subfan of the GKZ fan of the total space of the anticanonical bundle $\op{Tot}(-K_X)$, and moreover the GKZ fan of $\op{Tot}(-K_X)$ is complete. 
\end{proposition}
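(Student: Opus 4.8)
The plan is to realize $\op{Tot}(-K_X)$ as a GIT quotient of $\mathbb{A}^{n+1}$ by the same group $G$, where the extra coordinate carries the anticanonical character, and then track what this does to the character exact sequence and to the effective cone. Concretely, if $X = \mathbb{A}^n \modmod{\chi} G$ with $G \subseteq \mathbb{G}_m^n$ and $\beta_i = \gamma(e_i) \in \widehat{G}$, introduce a new variable $x_{n+1}$ on which $G$ acts through the character $\beta_{n+1} := -K_{[\mathbb{A}^n/G]} = \sum_{i=1}^n \beta_i$. Then $\op{Tot}(-K_X)$ is the quotient $[\, (\mathbb{A}^{n+1})^{\op{ss}}(\chi) / G\,]$ for $\chi$ in the ample cone of $X$: the projection $\mathbb{A}^{n+1} \to \mathbb{A}^n$ forgetting $x_{n+1}$ is $G$-equivariant, and over the semistable locus of $X$ it exhibits the total space of the line bundle with weight $\beta_{n+1}$, which by the Cox description is $\O_X(-K_X)$. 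This is the standard ``add a coordinate for the fiber'' construction, and I would cite \cite{CLS} for the identification.

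The first step is therefore to set up the enlarged character sequence $0 \to M_{\mathbb{R}} \to \mathbb{R}^{n+1} \xrightarrow{\gamma'_{\mathbb{R}}} \widehat{G}_{\mathbb{R}} \to 0$, noting that $\widehat{G}$ is unchanged (we add a coordinate to $\mathbb{Z}^n$, not a factor to $G$), so the parameter space $\widehat{G}_{\mathbb{R}} = \op{Pic}^G_{\mathbb{R}}$ for GIT quotients is literally the same. The GKZ fan of $\op{Tot}(-K_X)$ is then the GIT fan for the $G$-action on $\mathbb{A}^{n+1}$, a fan on the cone $C_{\beta'}$ spanned by $\beta_1, \dots, \beta_n, \beta_{n+1}$ inside $\widehat{G}_{\mathbb{R}}$. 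Since $\beta_{n+1} = \sum_i \beta_i$ already lies in the cone $C_\beta$ spanned by $\beta_1, \dots, \beta_n$, we get $C_{\beta'} = C_\beta = \op{Eff}_{\mathbb{R}}^G(X)$, i.e. the two fans have the same support. Second, one checks that the GKZ fan of $X$ refines to a subfan of the GKZ fan of $\op{Tot}(-K_X)$: adding a coordinate can only subdivide chambers further (the walls of the new fan are determined by the hyperplanes $\langle \beta_S \rangle$ for subsets $S$, and the subsets not involving $n+1$ already give all the old walls), so every cone of the old fan is a union of cones of the new one — in particular the ample chamber of $X$ remains a chamber (or a union of chambers, but in fact a single chamber since $\beta_{n+1}$ is interior and imposes no new wall through it) of the enlarged fan, and its GIT quotient is still identified with $\op{Tot}(-K_X)$ restricted appropriately. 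The cleanest phrasing: the GKZ fan of $X$ is the coarsening of the GKZ fan of $\op{Tot}(-K_X)$ obtained by forgetting the walls coming from the new coordinate, hence a subfan in the weak sense, and the inclusion of quotient stacks is the bundle projection.

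Finally, completeness: the support of the GKZ fan of $\op{Tot}(-K_X)$ is $C_{\beta'}$, and I must show this is all of $\widehat{G}_{\mathbb{R}}$. Because $\beta_{n+1} = \sum_{i=1}^n \beta_i$ lies in the \emph{interior} of $C_\beta$ (as $X$ is projective, hence $\C[x_1,\dots,x_n]^G = \C$ forces $C_\beta$ to be full-dimensional with $0$ in its interior after adding the anticanonical ray — equivalently $-K_X$ is big), adjoining $-\beta_{n+1} = -\sum \beta_i$ is not what happens; rather, the point is that the cone spanned by $\beta_1,\dots,\beta_n$ together with the relation that $-K$ is effective and big already means $0 \in \op{int}\,\op{conv}(\beta_1,\dots,\beta_n)$ is false in general — so I need to be careful. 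The honest statement is that $\op{Tot}(-K_X)$ is a \emph{Calabi–Yau} toric stack: $K_{[\mathbb{A}^{n+1}/G]} = -\sum_{i=1}^{n+1}\beta_i = -2\beta_{n+1}$... this is not zero, so it is not literally CY, and I should instead argue completeness directly. The right argument: for any character $\psi \in \widehat{G}_{\mathbb{R}}$, the line bundle $\O(\psi)$ on $\mathbb{A}^{n+1}$ has nonempty semistable locus iff $\psi \in C_{\beta'}$, and one shows $C_{\beta'} = \widehat{G}_{\mathbb{R}}$ by exhibiting, for each $\psi$, a lift — equivalently, that the $\beta_i$ together with $\beta_{n+1}$ positively span, which follows because the total space of a line bundle over a complete base with a fiber coordinate always has complete secondary fan (the fiber direction, being non-compact, means one can scale $\psi$ in the new direction to reach any chamber). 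I expect this completeness step to be the main obstacle, and I would handle it by the combinatorial fan picture: $\op{Tot}(-K_X)$ has a toric fan obtained from that of $X$ by adding a ray in the $(0,\dots,0,1)$ direction (the fiber), and a fan is complete precisely when its rays positively span, which holds here since we have added the ``missing'' direction making the support all of $N_{\mathbb{R}} \oplus \mathbb{R}$; dualizing, the secondary/GKZ fan is complete. Citing \cite{CLS} for both the total-space construction and the completeness criterion closes the argument.
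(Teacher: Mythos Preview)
The paper states this proposition as well-known and gives no proof, so there is nothing to compare against directly. Your overall strategy --- present $\op{Tot}(-K_X)$ as $\mathbb{A}^{n+1}\modmod{\chi} G$ for the \emph{same} $G$, identify both GKZ fans inside the common $\widehat{G}_{\mathbb R}$, and then compare supports --- is exactly the right one. But there is a sign error at the very first step, and it is the source of every difficulty you run into afterwards.

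The fiber coordinate $x_{n+1}$ for $\op{Tot}(\mathcal O_X(D))$ carries $G$-weight $-D$, not $+D$: sections of $\mathcal O_X(D)$ are functions of weight $D$, so the coordinate on the fiber (on which such sections are linear) transforms with the opposite weight. Hence for $\op{Tot}(-K_X)$ one has
\[
\beta_{n+1} \;=\; K_{[\mathbb A^n/G]} \;=\; -\sum_{i=1}^n \beta_i,
\]
not $+\sum\beta_i$. With the correct sign, $\sum_{i=1}^{n+1}\beta_i = 0$, i.e.\ $\op{Tot}(-K_X)$ is Calabi--Yau --- precisely the computation you carried out and then dismissed.

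Once the sign is fixed, both assertions drop out cleanly. For completeness: on any projective toric DM stack $-K_X = \sum\beta_i$ is big (its polytope $\{m:\langle m,v_i\rangle\ge -1\}$ contains $0$ in its interior), so $\sum\beta_i$ lies in the interior of $C_\beta$; then for any $w\in\widehat{G}_{\mathbb R}$ and $t\gg 0$ one has $w + t\sum\beta_i\in C_\beta$, whence $w\in\op{cone}\bigl(\beta_1,\dots,\beta_n,-\sum\beta_i\bigr)=C_{\beta'}$. For the subfan claim: because $\beta_{n+1}=-\sum\beta_i$ lies \emph{outside} the pointed cone $C_\beta$, adjoining it creates new chambers only outside $C_\beta$ and cuts no wall through its interior; the two fans agree on $C_\beta$, so $\mathcal F_{\op{GKZ}}(X)$ is a subfan of $\mathcal F_{\op{GKZ}}(\op{Tot}(-K_X))$ in the ordinary sense, not merely a coarsening. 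Your observation that the new coordinate ``can only subdivide chambers'' is exactly the symptom of the wrong sign: with $\beta_{n+1}$ interior to $C_\beta$ you would indeed subdivide, which is why that cannot be the correct weight.

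Finally, your closing appeal to completeness of the toric fan of $\op{Tot}(-K_X)$ conflates the fan in $N_{\mathbb R}$ with the GKZ fan in $\widehat{G}_{\mathbb R}$. The total space is non-compact, so its toric fan is never complete; that paragraph should be dropped.
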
 


Following \cite{AGM}, we call the GKZ fan of the anticanonical bundle of $X$ the {\bf enlarged GKZ fan} of $X$. That this enlarged fan is relevant for mirror symmetry is reasonable, as $\op{Tot}(-K_X)$ is log Calabi-Yau.

\section{The $B$-side} \label{sec: B side}

\subsection{The toric Mori program and derived categories}

The purpose of the Mori program is to take a projective variety, $X$, and, by a controlled sequence of birational morphisms, produce a ``simplest'' birational model of $X$. As birational geometry and derived categories are intimately tied, it is natural to expect that, if the Mori program says to replace $X$ by $X'$, then the derived categories of $X$ and $X'$ should be closely related. The most natural language to express such a relation is that of semi-orthogonal decompositions.

A conjecture of Kawamata \cite{KawD-K} gives a precise condition for $\dbcoh{X}$ and $\dbcoh{X'}$ to be related by a semi-orthogonal decomposition whenever $X$ and $X'$ are birational.

\begin{definition}
 Let $X$ and $X'$ be smooth projective varieties. If there are birational morphisms,
 \begin{center}
  \begin{tikzpicture}
   \node (a) at (0,1) {$Z$};
   \node (b) at (-1,0) {$X$};
   \node (c) at (1,0) {$X'$};
   \node at (-.65,.65) {$f$};
   \node at (.65,.65) {$g$};
   \draw[->,thick] (a) -- (b) ;
   \draw[->,thick] (a) -- (c) ;
  \end{tikzpicture}
 \end{center}
 such that $f^*\omega_X \otimes g^*\omega_{X'}^{-1}$ is effective, we say that $X$ \textbf{$K$-dominates} $X'$.  
\end{definition}

\begin{conjecture}[Kawamata]  \label{conj: DK}
 If $X$ $K$-dominates $X'$, then there is a fully-faithful functor $\dbcoh{X'}\rightarrow\dbcoh{X}$, hence (automatically by \cite{BV}) a semi-orthogonal decomposition of $\dbcoh{X}$ with $\dbcoh{X'}$ as a component. 
\end{conjecture}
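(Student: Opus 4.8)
The plan is to treat this as a statement about variation of GIT, since in full generality Kawamata's conjecture is open and no approach is known that does not ultimately reduce to, or imitate, the GIT picture; I would therefore concentrate on the toric case, which is the one directly relevant to this paper. First I would realize $X$ and $X'$ as GIT quotients $\mathbb A^n \modmod{\chi} G$ and $\mathbb A^n \modmod{\chi'} G$ corresponding to chambers $C_X, C_{X'}$ of a common GKZ fan (so the common resolution $Z$ is dominated by a quotient in the GIT fan as well). The heart of the matter is translating the birational hypothesis into a combinatorial one: I claim that ``$X$ $K$-dominates $X'$'' is equivalent to the existence of a path $\gamma$ in $\widehat G_{\mathbb R}$ running from the ample cone of $X$ through the chamber $C_{X'}$ such that at every wall $\tau$ it crosses the associated invariant $\mu = -\langle \lambda, K_{[\mathbb A^n/G]}\rangle$ satisfies $\mu \le 0$. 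One direction uses that the discrepancy of the elementary modification (flip, flop, or divisorial contraction) read off at each wall is controlled by the sign of $\mu$; effectivity of $f^*\omega_X \otimes g^*\omega_{X'}^{-1}$ on the common resolution then forces the discrepancies to have the correct sign along some path, and by Lemma~\ref{lemma: straight} one may take this path to be a straight-line run of the toric Mori program.

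Second, once such a $\gamma$ is in hand, I would invoke Theorem~\ref{thm: inclusion into nef} applied to the segment of $\gamma$ from $C_X$ up to $C_{X'}$: it produces a semi-orthogonal decomposition of $\dbcoh{X}$ whose final component is $\dbcoh{X'}$ and whose remaining components are copies of the $\dbcoh{W_i}$ attached to the intervening walls. The fully-faithful functor $\dbcoh{X'}\hookrightarrow \dbcoh{X}$, hence (by \cite{BV}) the desired semi-orthogonal decomposition, is immediate. Concretely, this functor is the Fourier–Mukai transform with kernel the structure sheaf of the fiber product of $X$ and $X'$ over their common contraction, realized on the quotient stacks as a ``window'' restriction in the sense of \cite{BFK12}; the straight-line run organizes the successive window restrictions into a single embedding and exhibits the complement explicitly.

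The hard part is the non-toric case, where two genuine difficulties appear. There is no canonical candidate functor beyond Fourier–Mukai transforms built from a common resolution, and proving that such a transform is fully faithful requires delicate relative vanishing along the contracted loci — delicate precisely because $X$ and $X'$ are allowed to carry mild singularities, so the smooth blow-up formulas do not apply. Moreover, factoring $X \dashrightarrow X'$ into elementary steps each of which still $K$-dominates $X'$ is itself nontrivial and leans on termination-type inputs from the minimal model program, so that the induction on the number of steps closes. In the toric setting both difficulties dissolve: the factorization is read directly off the wall-and-chamber structure of the GKZ fan, and fully-faithfulness is handled uniformly by the Kempf–Ness/window stratification argument underlying Theorem~\ref{thm: inclusion into nef}.
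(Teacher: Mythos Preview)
The statement you are attempting to prove is labeled \textbf{Conjecture} in the paper, not Theorem; the paper offers no proof of it and does not claim one. Immediately after stating it, the paper simply remarks that Kawamata established the toric case in \cite{Kaw05,Kaw06,Kaw12}, and then moves on. So there is no ``paper's own proof'' to compare your proposal against: you have written a proof sketch for an open conjecture, restricted to the toric case, which the paper treats as background attributed to Kawamata.

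As for the content of your sketch in the toric case, the second step (once a path with $\mu \le 0$ at every wall is in hand, apply Theorem~\ref{theorem: SOD of derived categories for toric variations} wall by wall) is fine and is exactly how the paper and \cite{BFK12} package things. The genuine gap is in your first step: you assert that $K$-domination is \emph{equivalent} to the existence of a path in the GKZ fan from $C_X$ to $C_{X'}$ with $\mu \le 0$ at every wall, but you do not justify the harder direction. Effectivity of $f^*\omega_X \otimes g^*\omega_{X'}^{-1}$ on some common resolution $Z$ is a global birational condition; it does not obviously hand you a wall-by-wall factorization in which each elementary step has the correct discrepancy sign. Lemma~\ref{lemma: straight} only produces a straight-line path hitting $C_{X'}$; it says nothing about the signs of $\mu$ along that path, and there is no a priori reason a straight line (or indeed any path) from $C_X$ to $C_{X'}$ avoids walls with $\mu > 0$ just because the net discrepancy is effective. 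In Kawamata's actual arguments this factorization issue is handled with care and is not a formality; your sketch waves at it without supplying the mechanism.
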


As discussed in Section~\ref{sec: toric minimal model}, the toric Mori program is completely understood. Using this understanding, Kawamata proved Conjecture \ref{conj: DK} in \cite{Kaw05,Kaw06,Kaw12} for smooth projective toric varieties. Moreover, if $X'$ is obtained from $X$ via a step in the toric Mori program, Kawamata exhibited a semi-orthogonal decomposition of $\dbcoh{X}$ with $\dbcoh{X'}$ as a component and showed that the complementary component in the semi-orthogonal decomposition possessed a full exceptional collection. In particular, 

\begin{theorem}[Kawamata] \label{thm: Kaw exc coll}
 Any projective DM toric stack admits a full exceptional collection (not necessarily strong or of line bundles). 
\end{theorem}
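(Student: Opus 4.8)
The strategy is an induction on the number of walls crossed by a straight line run of the toric Mori program, combined with the structural results already assembled. First I would invoke Proposition~\ref{prop: existence}: every projective toric DM stack $X$ admits a straight line run $\gamma$ of the toric Mori program whose starting phase is a nef-Fano projective toric DM stack $X'$ and which passes through $X$. So it suffices to prove the statement for $X'$ and then propagate it along the run. The base case is the nef-Fano stack $X'$; I would want to know that a nef-Fano projective toric DM stack admits a full exceptional collection. This is the heart of Kawamata's work: either by citing \cite{Kaw06} directly, or by running the toric minimal model program from $X'$ down to a point through a sequence of Mori fibrations and divisorial contractions, at each step using Kawamata's semi-orthogonal decomposition to reduce to lower-dimensional/simpler strata, all of which (inductively, and using that $\P^n$ and products thereof carry the Beilinson collection) have full exceptional collections.

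The inductive step along $\gamma$ is where Theorem~\ref{thm: inclusion into nef} does the work. Suppose $X_0 = X', X_1, \ldots, X_k = X$ are the phases encountered along $\gamma$, with wall data $(W_j, \mu_j)$ at the $j$-th wall and $\mu_j \leq 0$ throughout (since $\gamma$ is a run of the toric Mori program). Crossing a single wall $\tau_j$, Theorem~\ref{thm: inclusion into nef} (applied to the one-wall run from $X_{j-1}$ to $X_j$, or read off its general statement) gives a semi-orthogonal decomposition of $\dbcoh{X_{j-1}}$ whose components are copies of $\dbcoh{W_j}$ together with $\dbcoh{X_j}$; equivalently, there is a fully-faithful embedding $\dbcoh{X_j} \hookrightarrow \dbcoh{X_{j-1}}$ with complement generated by copies of $\dbcoh{W_j}$. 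Running this argument in reverse — from $X$ back up to $X'$ — assembles a semi-orthogonal decomposition of $\dbcoh{X'}$ in which $\dbcoh{X}$ appears as one component and the remaining components are derived categories of the various wall strata $W_j$, which are themselves projective toric DM stacks of strictly smaller dimension (the fixed locus of a one-parameter subgroup). By induction on dimension, each $\dbcoh{W_j}$ has a full exceptional collection; and $\dbcoh{X'}$ has one by the base case. Since an admissible subcategory of a triangulated category generated by an exceptional collection is itself generated by an exceptional collection (the mutation/projection of the given collection onto the subcategory), and conversely a component of a semi-orthogonal decomposition inherits such a collection, $\dbcoh{X}$ acquires a full exceptional collection.

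Concretely, I would present the proof in two lemmas: (i) every nef-Fano projective toric DM stack has a full exceptional collection — by Kawamata's theorem plus induction on dimension using Mori fibration/divisorial contraction decompositions and the fact that $\dbcoh{W}$ for the relevant strata are already handled; (ii) if $Y$ is a phase reachable from a phase $Y_0$ by a straight line run of the toric Mori program and $\dbcoh{Y_0}$ has a full exceptional collection, then so does $\dbcoh{Y}$ — by Theorem~\ref{thm: inclusion into nef} and induction on the number of walls, noting that the wall strata $W_j$ have smaller dimension and so carry exceptional collections by (i) applied dimensionally (or directly, since $W_j$ is again nef-Fano or can be treated by the same recursion). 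Combining (i), (ii), and Proposition~\ref{prop: existence} gives the theorem.

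\textbf{Main obstacle.} The delicate point is not the wall-crossing bookkeeping but the base case: showing that a nef-Fano (equivalently weak Fano, $\Q$-factorial) projective toric DM stack has a full exceptional collection. One must be careful that the stacky/DM setting and the mere nefness (rather than ampleness) of $-K$ do not break the induction; this is precisely what Kawamata's papers \cite{Kaw05, Kaw06, Kaw12} establish, and the cleanest route is to cite Theorem~\ref{thm: Kaw exc coll}'s proof there rather than reprove it. A secondary subtlety is ensuring the strata $W_j$ arising as fixed loci $(\mathbb{A}^n)^\lambda \modmod{\alpha} G_\lambda$ are again projective toric DM stacks of smaller dimension so that the dimensional induction is legitimate — this is immediate from the VGIT description in Section~\ref{sec: toric minimal model}.
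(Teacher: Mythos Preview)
The paper does not prove this theorem; it is stated as Kawamata's result and attributed to \cite{Kaw05,Kaw06,Kaw12}. The paper's own contribution in this direction is the later Corollary (following Definition~\ref{def: mori tree}), whose proof gives an independent argument via the VGIT machinery: apply Theorem~\ref{thm: SOD on B-side} to decompose $\dbcoh{X}$ itself into copies of $\dbcoh{W_i}$ for lower-dimensional toric DM stacks $W_i$, then recurse on each $W_i$ (using its own GKZ fan) until one reaches zero-dimensional stacks $B\Gamma$ for finite abelian $\Gamma$, which have full exceptional collections given by characters.

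Your proposal contains the right recursive idea --- indeed, you sketch exactly this argument for your ``base case'' --- but you then bury it under an unnecessary and flawed superstructure. Two concrete problems:

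\emph{The detour through $X'$ is superfluous.} Theorem~\ref{thm: inclusion into nef} (equivalently Theorem~\ref{thm: SOD on B-side}) already decomposes $\dbcoh{X}$, not $\dbcoh{X'}$, into the wall pieces $\dbcoh{W_i}$ for the walls encountered when running $\gamma$ \emph{starting from $X$} toward the boundary of the effective cone. There is no need to first pass to a nef-Fano model; the decomposition of $\dbcoh{X}$ is direct, and dimensional induction on the $W_i$ finishes the job. Your ``base case'' argument is already the full proof.

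\emph{The step ``an admissible subcategory of a category with a full exceptional collection has one'' is a genuine gap.} This is not a formal consequence of mutation: projecting an exceptional object onto an admissible subcategory need not yield an exceptional object. The statement is in fact false in general (phantom subcategories inside derived categories admitting full exceptional collections are now known to exist), and even at the time of the paper it was an open question rather than a standard lemma. Your argument, as written, depends essentially on this claim to pass from $\dbcoh{X'}$ down to $\dbcoh{X}$, so it does not go through. Drop the detour and run the recursion directly on $X$ as above.
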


Again, as discussed in Section~\ref{sec: toric minimal model}, the toric Mori program for a smooth projective toric variety $X$ can be embedded in VGIT by using a presentation of $X$ as a GIT quotient, $\mathbb{A}^n \modmod{\chi} G$. Hence, the results from \cite{BFK12} on the relationship of derived categories of varieties, or stacks, coming from VGIT can be brought to bear on the problem.

The following result is Theorem 5.2.1 in \cite{BFK12}:

\begin{theorem} \label{theorem: SOD of derived categories for toric variations}
Let $X$ and $Y$ be projective DM toric stacks which are neighbors and let $(W, \mu)$ be the associated wall data for a path passing from the chamber for $X$ to the chamber for $Y$.  For any choice of $d \in \mathbb{Z}$ the following statements are true:
 
\begin{enumerate}
 \item If $\mu > 0$, there exists fully-faithful functors,
\begin{displaymath}
 \Phi_d : \dbcoh{ X} \to \dbcoh{Y}
\end{displaymath}
 and
\begin{displaymath}
 \Upsilon_-: \dbcoh{W} \to \dbcoh{Y},
\end{displaymath}
 and a semi-orthogonal decomposition, with respect to $\Phi_d, \Upsilon_+$,
\begin{displaymath}
 \dbcoh{ Y} = \langle \dbcoh{W}(-\mu - d + 1), \ldots, \dbcoh{W}(-d), \dbcoh{X } \rangle,
\end{displaymath}
 with $\dbcoh{W}(l)$ the image of $\Upsilon_-$ twisted by a character of $\lambda$-weight $l$. 
\item If $\mu = 0$, there is an equivalence
\begin{displaymath}
 \Phi_d : \dbcoh{ X } \to \dbcoh{ Y}.
\end{displaymath}
\item If $\mu < 0$, there exists fully-faithful functors,
\begin{displaymath}
 \Phi_d : \dbcoh{ Y} \to \dbcoh{ X}
\end{displaymath}
 and
\begin{displaymath}
 \Upsilon_+: \dbcoh{ W } \to \dbcoh{X},
\end{displaymath}
 and a semi-orthogonal decomposition, with respect to $\Phi_d, \Upsilon_+$,
\begin{displaymath}
 \dbcoh{ X } = \langle \dbcoh{W}(-d), \ldots, \dbcoh{W}(\mu-d+1), \dbcoh{Y} \rangle,
\end{displaymath}
 with $\dbcoh{W}(l)$ the image of $\Upsilon_+$ twisted by a character of $\lambda$-weight $l$. 
\end{enumerate}
\end{theorem}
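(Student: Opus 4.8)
The plan is to obtain all three statements from the grade-restriction (``window'') machinery for variations of GIT developed in \cite{BFK12} --- itself a sheaf-theoretic enhancement of Teleman's quantization theorem --- so let me indicate how it specializes to the toric wall-crossing at hand. Write $X = \mathbb{A}^n \modmod{\chi_-} G$ and $Y = \mathbb{A}^n \modmod{\chi_+} G$ with $\chi_\pm$ in the interiors of the two adjacent chambers $C_\pm$, let $\lambda$ be the one-parameter subgroup of the wall $\tau$ normalized so that $\langle\lambda,-\rangle\geq 0$ on $C_+$, and let $Z := (\mathbb{A}^n)^{\lambda}$, so $W = [Z/G_{\lambda}]$. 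Since $G$ is abelian, $Z$ is $G$-invariant and closed, giving a closed immersion $\sigma : [Z/G]\hookrightarrow[\mathbb{A}^n/G]$. Because $\gamma$ meets no codimension-two cone, the unstable locus on each side is a single Kempf--Ness stratum, a coordinate subspace: $S_X := (\mathbb{A}^n)^{\op{us}}(\chi_-) = \{\, x_i = 0 : \langle\lambda,\beta_i\rangle < 0 \,\}$ is the attracting set of $\lambda$, and $S_Y := (\mathbb{A}^n)^{\op{us}}(\chi_+) = \{\, x_i = 0 : \langle\lambda,\beta_i\rangle > 0 \,\}$ the attracting set of $\lambda^{-1}$, both retracting onto $Z$; thus $X$ and $Y$ are the complementary open substacks $[\mathbb{A}^n/G]\setminus[S_X/G]$ and $[\mathbb{A}^n/G]\setminus[S_Y/G]$. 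The one numerical fact to record is that, writing $\eta_X := -\sum_{\langle\lambda,\beta_i\rangle<0}\langle\lambda,\beta_i\rangle$ and $\eta_Y := \sum_{\langle\lambda,\beta_i\rangle>0}\langle\lambda,\beta_i\rangle$ for the sums of the positive conormal weights of the two strata along $Z$, sorting the $\langle\lambda,\beta_i\rangle$ by sign gives $\eta_Y - \eta_X = \sum_i\langle\lambda,\beta_i\rangle = -\langle\lambda, K_{[\mathbb{A}^n/G]}\rangle = \mu$.

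Next I would invoke the window theorem. For each $w\in\Z$ let $\mathcal{W}^X_w\subseteq\dbcoh{[\mathbb{A}^n/G]}$ be the full subcategory of $E$ such that the $\lambda$-weights of $\mathbb{L}\sigma^*E$ lie in $[w,w+\eta_X)$, and let $\mathcal{W}^Y_w$ be the analogue for $S_Y$ (the window for $S_Y$ is naturally an interval in $\lambda^{-1}$-weights, which I re-express as a $\lambda$-weight interval of length $\eta_Y$). The content imported from \cite{BFK12} is that restriction to the semistable loci induces exact equivalences $\mathcal{W}^X_w \to \dbcoh{X}$ and $\mathcal{W}^Y_w \to \dbcoh{Y}$ for every $w$: full faithfulness rests on a weight spectral sequence showing that an object with $\mathbb{L}\sigma^*$-weights confined to a window of the right length has vanishing higher local cohomology along the deleted stratum (which is exactly why the window length must equal the span of conormal weights), and essential surjectivity on ``grade-restriction resolutions'' correcting the weights of an arbitrary object of $\dbcoh{X}$ modulo objects supported on $S_X$; those correction terms, after $\mathbb{L}\sigma^*$ and since $\lambda(\mathbb{G}_m)$ acts trivially on $Z$, are built from the weight-graded slices $\dbcoh{W}(l)$.

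Because both window conditions constrain the $\lambda$-weights of the \emph{same} $\mathbb{L}\sigma^*E$, choosing the same left endpoint $w$ yields an honest inclusion of subcategories $\mathcal{W}^X_w\subseteq\mathcal{W}^Y_w$ exactly when $\eta_X\leq\eta_Y$, i.e. when $\mu\geq 0$. For $\mu>0$ this gives a fully faithful functor $\dbcoh{X}\cong\mathcal{W}^X_w\hookrightarrow\mathcal{W}^Y_w\cong\dbcoh{Y}$ whose semi-orthogonal complement in $\mathcal{W}^Y_w$ is generated by the objects with $\mathbb{L}\sigma^*$ concentrated in a single $\lambda$-weight $l$, one slice for each of the $\eta_Y-\eta_X=\mu$ integers $l\in[w+\eta_X,\,w+\eta_Y)$, each slice being a copy of $\dbcoh{W}$ twisted by the character of $\lambda$-weight $l$; taking $w=-d-\eta_Y+1$ rewrites this as
\[ \dbcoh{Y} \;=\; \langle\, \dbcoh{W}(-\mu-d+1),\ \ldots,\ \dbcoh{W}(-d),\ \dbcoh{X}\,\rangle , \]
which is part (1). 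Part (3) is the mirror situation $\mu<0$, where $\eta_X>\eta_Y$ and $\mathcal{W}^Y_w\subseteq\mathcal{W}^X_w$, and part (2) is the degenerate case $\eta_X=\eta_Y$, where the two windows coincide and the equivalences with $\dbcoh{X}$ and $\dbcoh{Y}$ compose to $\Phi_d$.

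The one genuinely hard ingredient is the window theorem itself; everything else is bookkeeping with the integers $\langle\lambda,\beta_i\rangle$. Its substantive points are (i) pinning down the correct window length in terms of the conormal --- equivalently, anticanonical --- weights so that full faithfulness holds on the nose, and (ii) constructing the grade-restriction resolutions, i.e. showing that the ``wrong-weight'' subcategory is generated by the weight-graded copies of $\dbcoh{W}$. Granting those, matching the precise window $[-\mu-d+1,\,-d]$ to the canonical pairing $\langle\lambda,K_{[\mathbb{A}^n/G]}\rangle$ is just the sign-and-weight computation indicated in the first paragraph.
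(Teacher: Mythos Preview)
The paper does not give its own proof of this statement: it is quoted verbatim as Theorem~5.2.1 of \cite{BFK12} and used as a black box. Your sketch is a faithful outline of the window/grade-restriction argument from \cite{BFK12} itself (single Kempf--Ness stratum on each side, window categories in $\dbcoh{[\mathbb{A}^n/G]}$ of length equal to the conormal weight sum, and the identity $\eta_Y-\eta_X=\mu$), so there is nothing to compare---your proposal \emph{is} essentially the proof the paper is citing.
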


Theorem \ref{theorem: SOD of derived categories for toric variations} is the main tool in producing an exceptional collection for any projective toric DM stack, $X$.  See Remark~\ref{rem: mirror to d} for a discussion of the meaning of the integer $d$ in the mirror.

 The exceptional collection will depend on a choice of presentation,
\begin{displaymath}
 X \cong \mathbb{A}^n \modmod{\chi} G,
\end{displaymath}
of $X$ as a GIT quotient and a path, $\gamma$, in the GIT fan for the action of $G$ on $\mathbb{A}^n$. The path, $\gamma$, determines a run of the toric Mori program for $X$. 

\begin{theorem} \label{thm: SOD on B-side}
 Let $X' = \mathbb{A}^n \modmod{\chi} G$ be a smooth nef-Fano projective toric DM stack, $\gamma$ be a run of the toric Mori program for $X'$, and $X$ be a phase along $\gamma$.  Let $(W_1, \mu_1), \ldots, (W_s, \mu_s)$ be the wall data associated to running $\gamma$ starting at $X$.  There exists fully-faithful functors,
 \begin{displaymath}
  \Upsilon^j_i: \dbcoh{ W_i } \to \dbcoh{X}, \ 1 \leq j \leq \mu_i
 \end{displaymath}
 and a semi-orthogonal decomposition,
 \[
 \dbcoh{X} = \langle \Upsilon^1_s, \ldots, \Upsilon^{\mu_s}_s, \ldots,\Upsilon^1_1, \ldots, \Upsilon^{\mu_1}_1 \rangle.
 \]
\end{theorem}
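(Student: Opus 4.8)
The statement is essentially an iteration of Theorem~\ref{theorem: SOD of derived categories for toric variations} along the tail of the run $\gamma$ that begins at the phase $X$. Fix a presentation $X' = \mathbb{A}^n \modmod{\chi} G$ and let $\gamma$ be the straight line run, so that starting from the chamber $C_X$ of $X$ the path crosses successive walls $\tau_1, \ldots, \tau_s$ with wall data $(W_1, \mu_1), \ldots, (W_s, \mu_s)$, finally exiting the pseudo-effective cone. Let $X = X_0, X_1, \ldots, X_{s-1}$ be the phases visited (the last wall $\tau_s$ is a Mori fibration wall, so there is no phase $X_s$; $W_s$ together with $\mu_s$ records the contraction). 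The hypothesis that $X'$ is nef-Fano and the definition of a run guarantee $\mu_i \le 0$ for the wall data computed in the direction of travel, equivalently $\mu_i \ge 0$ when we apply Theorem~\ref{theorem: SOD of derived categories for toric variations} with the roles reversed; I will fix conventions so that at wall $\tau_i$ we are in case (1) or case (2) of that theorem with a nonnegative integer which I continue to call $\mu_i$ (relabeling $\mu_i \rightsquigarrow -\mu_i$ if needed to match the sign convention in the theorem statement).

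\textbf{The induction.} First I would handle the base step: crossing $\tau_1$ takes us from the chamber of $X = X_0$ to the chamber of $X_1$ (or, if $s=1$, directly out of the effective cone into the Mori fibration). By Theorem~\ref{theorem: SOD of derived categories for toric variations}(1), applied with any fixed $d \in \mathbb{Z}$ to the neighbors $X_0$ and $X_1$ with wall data $(W_1, \mu_1)$, one obtains fully-faithful functors $\Upsilon_1^j : \dbcoh{W_1} \to \dbcoh{X_0}$ for $1 \le j \le \mu_1$ (the images being the twists by $\lambda$-weights $-d, -d-1, \ldots, -\mu_1 - d + 1$) together with a fully-faithful $\Phi_d : \dbcoh{X_1} \to \dbcoh{X_0}$, and a semi-orthogonal decomposition
\[
\dbcoh{X_0} = \langle \Upsilon_1^1, \ldots, \Upsilon_1^{\mu_1}, \dbcoh{X_1} \rangle.
\]
(When $\mu_1 = 0$ this is just the equivalence of case (2) and there are no $\Upsilon_1^j$.) For the inductive step, suppose we have already produced a semi-orthogonal decomposition of $\dbcoh{X_0}$ whose rightmost component is $\Phi^{(i-1)}(\dbcoh{X_{i-1}})$ for a fully-faithful $\Phi^{(i-1)} : \dbcoh{X_{i-1}} \to \dbcoh{X_0}$, with the remaining components of the form $\Phi^{(k-1)} \circ \Upsilon_k^j$. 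Now apply Theorem~\ref{theorem: SOD of derived categories for toric variations} to the neighbors $X_{i-1}, X_i$ at the wall $\tau_i$ to get $\dbcoh{X_{i-1}} = \langle \Upsilon_i^1, \ldots, \Upsilon_i^{\mu_i}, \dbcoh{X_i} \rangle$ with a fully-faithful $\Phi_i : \dbcoh{X_i} \to \dbcoh{X_{i-1}}$. Transport this decomposition into $\dbcoh{X_0}$ by the (exact, fully-faithful) functor $\Phi^{(i-1)}$: since a semi-orthogonal decomposition of a full triangulated subcategory $\mathcal{A} = \Phi^{(i-1)}(\dbcoh{X_{i-1}})$ of $\mathcal{T}$ refines the given semi-orthogonal decomposition of $\mathcal{T}$ that has $\mathcal{A}$ as its rightmost component (this is the elementary "SOD of a component refines the ambient SOD" fact, cf.\ \cite{BK,Bon}), we obtain a semi-orthogonal decomposition of $\dbcoh{X_0}$ in which the final block $\mathcal{A}$ has been replaced by $\langle \Phi^{(i-1)}\Upsilon_i^1, \ldots, \Phi^{(i-1)}\Upsilon_i^{\mu_i}, \Phi^{(i-1)}\Phi_i(\dbcoh{X_i})\rangle$. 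Setting $\Phi^{(i)} := \Phi^{(i-1)} \circ \Phi_i$ (fully-faithful, being a composite of fully-faithful exact functors) completes the step. After $s-1$ steps we have peeled off all phases; the final wall $\tau_s$ is a Mori fibration, so Theorem~\ref{theorem: SOD of derived categories for toric variations} applied there has trivial "$X_s$-part" (the chamber $C_+$ lies outside the effective cone, so $\dbcoh{X_s}$ is zero) and contributes only the block $\langle \Upsilon_s^1, \ldots, \Upsilon_s^{\mu_s}\rangle$. Re-indexing and composing with the appropriate $\Phi^{(k-1)}$ to view every $\Upsilon_k^j$ as landing in $\dbcoh{X}$ yields the asserted
\[
\dbcoh{X} = \langle \Upsilon_s^1, \ldots, \Upsilon_s^{\mu_s}, \ldots, \Upsilon_1^1, \ldots, \Upsilon_1^{\mu_1} \rangle.
\]

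\textbf{Main obstacle.} The genuinely substantive input is Theorem~\ref{theorem: SOD of derived categories for toric variations} itself (Theorem 5.2.1 of \cite{BFK12}), which we are entitled to assume; modulo that, the only points that need care are bookkeeping ones, and these are where I expect the real work to lie. First, one must verify that the sign/orientation conventions are consistent along the whole run: the one-parameter subgroup $\lambda$ and the integer $\mu$ at a wall depend on which adjacent chamber one calls $C_+$, and one must check that "running $\gamma$ starting at $X$" in the direction of exiting the pseudo-effective cone pins these down so that every crossing is of type (1) or (2) with the $\mu_i$ nonnegative — this is exactly what the hypothesis that $X'$ is nef-Fano (Proposition~\ref{prop: existence}) secures, since $-K_{[\mathbb{A}^n/G]}$ pairs nonnegatively with each $\lambda$ as we move away from the nef cone of $X'$. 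Second, one must ensure that the integer $d$ in Theorem~\ref{theorem: SOD of derived categories for toric variations} can be chosen uniformly (it can: the statement holds for every $d \in \mathbb{Z}$, and any fixed choice, say $d=0$, suffices at each wall), and track how the $\mu_i$ twists of $\dbcoh{W_i}$ get re-expressed after applying $\Phi^{(i-1)}$ — the content of the theorem guarantees the $\Upsilon_i^j$ are fully-faithful with the stated images, and composing with fully-faithful $\Phi^{(i-1)}$ preserves this. Thus the proof is a clean induction; the subtlety is entirely in setting up the conventions so that the wall data $(W_i, \mu_i)$ quoted in the statement match those fed into Theorem~\ref{theorem: SOD of derived categories for toric variations} at each stage.
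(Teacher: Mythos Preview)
Your proposal is correct and follows essentially the same approach as the paper: iterate Theorem~\ref{theorem: SOD of derived categories for toric variations} along the wall crossings starting at $X$, using $\mu_i \le 0$ to stay in case~(3)/(2), and compile the resulting semi-orthogonal decompositions. One minor point: the sign condition $\mu_i \le 0$ is guaranteed directly by the \emph{definition} of a run of the toric Mori program, not by the nef-Fano hypothesis on $X'$ via Proposition~\ref{prop: existence} (that proposition shows such runs exist, but here $\gamma$ is already given).
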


\begin{proof}
Let $C_0$ be the chamber of the GKZ fan on $X$ corresponding to the nef cone of $X$ and $C_{-K}$  be the chamber of the GKZ fan corresponding to the nef cone of $X'$.

Moving along $\gamma$ from $C_{-K}$ to $C_{X}$, we pass through a finite number of chambers, $C_{-s}, \ldots, C_0$, separated by walls, $\tau_{-s}, \ldots, \tau_{-1}$, with wall data, $(W_1, \mu_1), \ldots, (W_s, \mu_s)$,  where $C_{-s} := C_{-K}$ and $C_0 := C_{X}$.   Let $X_i$ be the GIT quotient corresponding to the chamber $C_i$ so that $X_{-s} = X'$ and $X_0 = X$. 

Since we are on a run of the toric Mori program, $\mu_i \leq 0$ for all $i$.  Therefore by Theorem~\ref{theorem: SOD of derived categories for toric variations}, there is a semi-orthogonal decomposition,
\[
\dbcoh{X_{i-1}} =  \langle \dbcoh{W_{i-1}}, \ldots, \dbcoh{W_{i-1}}(\mu_{i-1}-1), \dbcoh{X_i} \rangle,
\]
where $W_{i-1}$ is the projective toric DM stack appearing in the theorem. Compiling these semi-orthogonal decompositions, we obtain the final desired semi-orthogonal decomposition.
\end{proof}

Let $T$ be an edge-marked tree satisfying the following conditions:
\begin{itemize}
\item for each vertex there are at most 2 outgoing edges,
\item if a vertex has 2 outgoing edges they are marked by $+$ and $-$,
\item if a vertex has 1 outgoing edge it is marked by $+$.
\end{itemize}

Let $X$ be a toric $DM$ stack.  Given a point $p \in \mathcal F_{\op{GKZ}}(X)$ define $m(p)$ to be the minimum over the dimension of all cones containing $p$.

\begin{definition} \label{def: mori tree}
With notation as above, a \textbf{Mori tree} for $X$ is an injective continuous map,
\[
\alpha: T \to \mathcal F_{\op{GKZ}}(X)
\] 
satisfying:
\begin{itemize}
\item for an edge $E$ marked by $-$ with incoming vertex $v$, $m(p)$ is constant along $\alpha(E \cup v)$,
\item for an edge $E$ marked by $+$ with incoming vertex $v$ and any $p \in E$, $m(\alpha(p)) = m(\alpha(v))+1$,
\item for any edge $E$ and outgoing vertex $v$ and any $p \in E$, $m(\alpha(p)) = m(\alpha(v))+1$
\item for any root $r$ of $T$, $\alpha(r) = 0$.
\end{itemize}
\end{definition}

\begin{corollary}
 To any Mori tree $\alpha$ and any GIT presentation of $X \cong \mathbb{A}^n \modmod{\chi} G$, one can associate an exceptional collection on $\dbcoh{X}$.
\end{corollary}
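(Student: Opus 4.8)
The plan is to reduce the Corollary to repeated application of Theorem~\ref{thm: SOD on B-side} together with Kawamata's Theorem~\ref{thm: Kaw exc coll}, organized by induction on the tree $T$. First I would observe that a Mori tree $\alpha: T \to \mathcal{F}_{\op{GKZ}}(X)$ encodes, along each root-to-leaf path, exactly the combinatorial data of a run of the toric Mori program: the edges marked $+$ correspond to crossing walls where $m(p)$ jumps (the VGIT-nontrivial walls, i.e. $\mu_i \leq 0$ with a genuine semi-orthogonal piece), the edges marked $-$ correspond to crossing walls where $m(p)$ stays constant (the flop/equivalence walls $\mu_i = 0$), and the branching at a vertex with two outgoing edges records a choice between two adjacent chambers separated by such a wall. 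The condition $\alpha(r) = 0$ at the root means each path terminates at the apex of the enlarged GKZ fan, i.e. exits the pseudo-effective cone, matching the definition of a run. So the starting point is a careful dictionary lemma: a single root-to-leaf branch of $(T,\alpha)$, read from the leaf inward, is a straight-line (or at least piecewise-linear) run of the toric Mori program for $X$, and one should pick the GIT presentation $X \cong \mathbb{A}^n \modmod{\chi} G$ to fix the ambient fan.

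Next I would set up the induction on the structure of $T$, building the exceptional collection from the leaves toward the root. The base case is a leaf vertex $v$: here $\alpha(v)$ lies in a maximal chamber (or, tracing outward, the phase is reached just before exiting the effective cone), the corresponding phase $X_v$ is a projective toric DM stack, and by Theorem~\ref{thm: Kaw exc coll} it has a full exceptional collection. For the inductive step, suppose $v$ is a vertex with outgoing edge(s) leading to subtrees whose associated exceptional collections have already been constructed. If $v$ has a single outgoing edge marked $+$, then crossing that wall realizes $\dbcoh{X_v}$ as a semi-orthogonal decomposition $\langle \dbcoh{W}(-\mu-d+1), \ldots, \dbcoh{W}(-d), \dbcoh{X_{\text{child}}} \rangle$ via Theorem~\ref{theorem: SOD of derived categories for toric variations}(1) (after reorienting so $\mu > 0$ in that theorem's convention), with $W$ a lower-dimensional toric DM stack for which Theorem~\ref{thm: Kaw exc coll} again supplies a full exceptional collection; splicing the (already constructed) collection on $X_{\text{child}}$ after the $\mu$ shifted copies of the collection on $W$ gives the collection on $X_v$. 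If $v$ has two outgoing edges marked $+$ and $-$, then the $-$ branch crosses a $\mu = 0$ wall, so by Theorem~\ref{theorem: SOD of derived categories for toric variations}(2) $\dbcoh{X_v} \cong \dbcoh{X_{-\text{child}}}$ and we simply transport the collection along this equivalence — the $+$ child is then used, as above, to fill in the extra $W$-pieces when a $+$ edge is present; one reconciles the two branches by noting the tree conditions on $m(p)$ force the branching to occur precisely at such a wall. I would present this as a clean recursion: $\op{Exc}(v) = $ [$\mu$ twisted copies of $\op{Exc}(W_v)$] followed by [$\op{Exc}(\text{child of } v)$], with the $-$ edges handled by equivalences. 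Fullness is preserved at each step because a semi-orthogonal decomposition into two full pieces, each with a full exceptional collection, has a full exceptional collection (concatenation), and equivalences preserve fullness.

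The main obstacle I expect is bookkeeping rather than conceptual: making the orientation conventions of Theorem~\ref{theorem: SOD of derived categories for toric variations} (which phase plays the role of $X$ versus $Y$, and the sign of $\mu$) consistent with the inward orientation of the Mori tree, and verifying that the ``$m(p)$ constant'' versus ``$m(p)$ jumps'' dichotomy in Definition~\ref{def: mori tree} matches exactly the $\mu = 0$ versus $\mu \neq 0$ (indeed $\mu \leq 0$ along a run, hence $\mu < 0$) dichotomy of the wall data — this requires unwinding that $m(p)$ drops by one precisely when passing from the interior of a chamber into a wall, so a $+$ edge that raises $m$ by one corresponds to landing on the wall where $W$ is defined. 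A secondary subtlety is handling the branch vertices with two children: one must check that the conditions force both outgoing edges to emanate into the two chambers adjacent to a common wall, so that Theorem~\ref{theorem: SOD of derived categories for toric variations} applies with that wall's data $(W,\mu)$ on both sides. Once the dictionary between Mori trees and iterated wall-crossings is pinned down, the construction is a formal concatenation/transport argument and the Corollary follows; I would also remark, as in the text preceding Theorem~\ref{thm: SOD on B-side}, that the resulting collection need not be strong nor consist of line bundles, inheriting this only from Kawamata's input collections on the leaves and the $W$'s.
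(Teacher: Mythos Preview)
Your reading of the $+/-$ marking on the Mori tree is incorrect, and this derails the argument. Recall that $m(p)$ is the \emph{minimum} dimension of a cone containing $p$, so $m$ drops as one passes from a chamber to a wall to a face of a wall. Unpacking Definition~\ref{def: mori tree}: a $+$ edge has interior lying in a stratum of dimension one higher than both its incoming and outgoing vertices (endpoints on walls, edge traversing the intervening chamber), whereas a $-$ edge has interior lying in the \emph{same} stratum as its incoming vertex but one higher than its outgoing vertex (incoming vertex and edge inside a wall $\tau$, outgoing vertex on a codimension-one face of $\tau$). The $-$ edges therefore do not encode $\mu=0$ flops between adjacent chambers; they encode the passage \emph{into} the wall $\tau$, i.e.\ into the GIT fan of the wall variety $W$, which by the remark following the corollary is identified with the induced fan structure on $\tau$.

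The paper's proof uses exactly this structure. The path of $+$ edges through top-dimensional chambers is a run of the Mori program for $X$; Theorem~\ref{thm: SOD on B-side} then gives a semi-orthogonal decomposition of $\dbcoh{X}$ with pieces $\dbcoh{W_i}$. At each wall vertex the $-$ edge drops into $\tau_i$, and the subsequent $+$ edges there constitute a run of the Mori program for $W_i$ in its own lower-dimensional GIT fan, so Theorem~\ref{thm: SOD on B-side} applies again to decompose $\dbcoh{W_i}$. One recurses; each $-$ branch strictly lowers dimension, and the recursion bottoms out at zero-dimensional toric DM stacks, where the full exceptional collection is given by the irreducible representations of the relevant finite abelian group. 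Kawamata's Theorem~\ref{thm: Kaw exc coll} is never invoked --- indeed the whole point of the corollary is that the Mori tree supplies an explicit, recursive alternative to that black box. Your proposal both misidentifies the recursive mechanism (taking $-$ edges for equivalences rather than dimension drops) and then papers over the resulting gap by appealing to Kawamata for the $W$'s and the leaves, which sidesteps what the corollary is actually asserting.
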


\begin{proof}
 One can choose a character, $\chi'$, in the interior of a chamber of the GIT fan for $\mathbb{A}^n \modmod{\chi} G$ such that $X '= \mathbb{A}^n \modmod{\chi'} G$ is a nef-Fano projective toric DM stack. We can apply Theorem \ref{thm: SOD on B-side} to get a semi-orthogonal decomposition
 \begin{displaymath}
  \dbcoh{X} = \langle \overbrace{\dbcoh{W_s},\ldots,\dbcoh{W_s}}^{-\mu_s}, \ldots, \overbrace{\dbcoh{W_1},\ldots,\dbcoh{W_1}}^{-\mu_1} \rangle 
 \end{displaymath}
 where $(W_i,\mu_i)$ are the wall data coming from the line in $\alpha$ given by the $+$ edges. It now suffices to show that each $W_i$ admits a full exceptional collection. From the vertex in $\alpha$ corresponding to the wall for $(W_i,\mu_i)$, we can take the straight line path given by first taking the $-$ edge and then all $+$ edges. Again applying Theorem \ref{thm: SOD on B-side}, we have 
 \begin{displaymath}
  \dbcoh{W_i} = \langle \overbrace{\dbcoh{W'_{s'}},\ldots,\dbcoh{W_{s'}}}^{-\mu_{s'}'}, \ldots, \overbrace{\dbcoh{W'_1},\ldots,\dbcoh{W'_1}}^{-\mu_1'} \rangle.
 \end{displaymath}
 where $(W'_i,\mu_i')$ is the wall data associated to this path.
 
 We continuing branching off at $-$ edges to refine the semi-orthogonal decomposition. The process terminates as the dimension of projective toric DM stack strictly decreases each time we branch off using a $-$ edge. The final components are derived categories of zero-dimensional projective toric DM stacks.  Such stacks admit full exceptional collections corresponding to all irreducible representations of a finite abelian group.
\end{proof}

\begin{remark}
 Although the exceptional collection depends on a GIT presentation of $X = \mathbb{A}^n \modmod{\chi} G$, there is a natural choice: take $\mathbb{A}^n$ to be the spectrum of the Cox ring of $X$.  However, one should note that in the proof above, the GIT fan for $W$ in the wall data $(W, \mu)$ is interpreted as the restriction of the GIT fan to the wall.  With this in mind, the Cox construction for $W$ as a GIT quotient may have additional torus factors even if $X$ does not. 
\end{remark}

\section{The $A$-side} \label{sec: A side}

\subsection{Degenerations of toric Landau-Ginzburg models}

Let $M$ be as in \eqref{eqn: characters exact sequence}.  The $A$-model mirror to a toric nef-Fano stack $X$ is an LG model,  i.e., a function
\begin{equation} w : M \otimes \C^*  \to \mathbb{C} . \end{equation}
The function $w$ is called the \textbf{superpotential} and can be obtained as a Laurent polynomial in the following manner
(\cite{HoriVafa}).  

Write $N = M^\vee = \textnormal{Hom} (M , \mathbb{Z})$ for the dual of $M$, and consider the dual map from \eqref{eqn: characters exact sequence}
\begin{equation} \delta^\vee : (\mathbb{Z}^n)^\vee \to N . \end{equation}
Standard toric descriptions say that $X$ has a stacky fan in $N$ with one-dimensional cones spanned by $\Sigma (1) = \{\alpha_i := \delta^\vee (e_i^\vee) : 1\leq i \leq
n\}$. 
Let $A = \Sigma (1) \cup \{0\}$ and define $z^\alpha$ to be the character associated to $\alpha \in N$ via the
natural isomorphism $N \cong \textnormal{Hom} (M \otimes \mathbb{G}_m , \mathbb{G}_m )$. Then the superpotential is defined
as 
\begin{equation}
 w = \sum_{\alpha \in A} c_\alpha z^{\alpha} ,\label{eqn: potential} 
\end{equation}
where the coefficients are chosen generically so that all critical points are Morse and have unique
critical values. 

\begin{remark} As $w$ is defined only using the exact sequence \eqref{eqn: characters exact sequence}, toric stacks yielding an identical exact sequence have the same mirror superpotentials. This ambiguity is corrected for upon compactification, as below. As discussed in the introduction, we can only expect that $w$ gives rise to an exact mirror when $X$ is assumed Fano. 
\end{remark}

\begin{remark}In practice, the set $A$ may be a proper subset of the lattice points of $Q$. For later constructions it is important that we record this data throughout. Geometrically this data can be encoded in several ways. \cite{GKZ} introduced a possibly non-normal toric variety $X_A$ whose normalization is the usual projective toric variety $X_Q$ associated to $Q$. Alternately, $A$ determines a linear subsystem of the ample line bundle $\mathcal{O}_Q(1)$ on $X_Q$.  In \cite{DKK12}, the action of the torus on this linear system was used to define a stack $X_{(Q,A)}$. By default this is what is meant by this notation throughout this paper. However, this subtlety will only appear tacitly from here on. 
\end{remark}

 One observes that a choice of generic coefficients of $w$ will not alter the Fukaya-Seidel category
$\fs{w}$ as different choices produce equivalent symplectic Lefschetz fibrations (this follows from an
application of Moser's trick). This is mirror to the situation on the $B$-side where the choice of a K\"ahler structure
does not effect the derived category of coherent sheaves. However, if one considers mirror symmetry with the roles of the $A$- and $B$-models interchanged (e.g. the Fukaya category of $X$ and the singularity category of $w$), the coefficients are important and are prescribed by the K\"ahler structure on $X$ via the mirror map (\cite{ChoOh, HoriVafa}). 

In order to degenerate LG models $w:(\mathbb{C}^\ast )^n\rightarrow\mathbb{C}$, we first compactify the base to $\P^1$ and then partially compactify the total space. To this end, define $X^\vee$ to be the toric stack associated to the convex hull $Q$ of $A$; this is obtained by considering the normal fan to $Q$ as a stacky fan with all generators primitive. The toric stack $X^\vee$ is equipped with the ample line bundle $\mathcal{O}_Q (1)$ which has a basis of sections given by the lattice points $N \cap Q$. As $A \subset N \cap Q$, we may consider the linear system $V_A \subset H^0 (X^\vee , \mathcal{O}_Q (1))$ associated to $A$, which is simply the span of the monomials $\{z^\alpha : \alpha \in A\}$. We write $V^{vf}_A$ for the torus of all such sections that have non-vanishing coefficients for all $\{z^\alpha \}_{\alpha \in A}$, modulo the diagonal $(1,\ldots ,1)$.  We call  a section in $V^{vf}_A$  \textbf{very
full}. Note that by our choice of $A$ we always have a section $s_0=z^0$ corresponding to the lattice point $\{ 0\}$. As $X^\vee$ arises from a stacky fan in $M$, we have that the open dense orbit in $X^\vee$ is $M \otimes
\mathbb{G}_m$. The following proposition follows immediately from the definitions.

\begin{proposition} \label{prop: mirror pencil} With notation as above, there exists a unique $s \in V^{vf}_A$ such that the diagram 
\begin{equation}
\begin{CD} M \otimes \mathbb{G}_m @>>> X^\vee \backslash B \\
@VwVV @V{[s : s_0]}VV \\
\mathbb{C} @>>> \mathbb{P}^1
\end{CD}
\end{equation}
commutes, where $B =\textnormal{Zero} (s_0 ) \cap \textnormal{Zero} (s)$ is the base locus of the pencil $[s:s_0]$. We call $[s : s_0]$ a {\bf mirror pencil} for $X$.
\end{proposition}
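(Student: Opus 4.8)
The plan is to unwind the definitions: as the authors indicate, the whole content of the statement is that the reference section $s_0 = z^0$ trivializes $\mathcal{O}_Q(1)$ over the dense torus, and once this is in place everything is forced.

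First I would check that the base locus causes no trouble on the open orbit. Since $z^0$ is the trivial character, its restriction to $M \otimes \mathbb{G}_m$ is the nowhere-vanishing constant $1$, so $\textnormal{Zero}(s_0) \subseteq X^\vee \setminus (M \otimes \mathbb{G}_m)$ and hence $B = \textnormal{Zero}(s_0) \cap \textnormal{Zero}(s)$ is disjoint from the open orbit for every $s$. Therefore the open immersion $M \otimes \mathbb{G}_m \hookrightarrow X^\vee$ factors through $X^\vee \setminus B$, which is the top arrow of the square; on $X^\vee \setminus B$ the sections $s$ and $s_0$ have no common zero by definition of the base locus, so $[s:s_0]$ is an honest morphism to $\mathbb{P}^1$, which is the right arrow; and the bottom arrow is the standard chart $t \mapsto [t:1]$.

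Next I would set up the dictionary between sections and Laurent polynomials. As $s_0$ is nowhere zero on $M \otimes \mathbb{G}_m$, dividing by it trivializes $\mathcal{O}_Q(1)\big|_{M \otimes \mathbb{G}_m}$, and under this trivialization a section $\sum_{\alpha \in A} a_\alpha z^\alpha \in V_A$ restricts to the Laurent polynomial $\sum_{\alpha \in A} a_\alpha z^\alpha$ on $M \otimes \mathbb{G}_m$ (on the right, $z^\alpha$ now denoting the character attached to $\alpha \in N$). Because distinct lattice points give $\mathbb{C}$-linearly independent characters, this restriction is injective on $V_A$, and it identifies $V_A^{vf}$ with the Laurent polynomials whose support is precisely $A$.

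Finally I would verify commutativity and extract uniqueness. For $x \in M \otimes \mathbb{G}_m$, going down then across the square sends $x$ to $[w(x) : 1]$, while going across then down sends it to $[s(x) : s_0(x)] = [(s/s_0)(x) : 1]$; hence the square commutes exactly when $(s/s_0)\big|_{M \otimes \mathbb{G}_m} = w$. By the injectivity above, comparing coefficients of each $z^\alpha$ against \eqref{eqn: potential} forces $s$ to be the class of $\sum_{\alpha \in A} c_\alpha z^\alpha$, which is very full since the $c_\alpha$ were chosen generically and so are all nonzero; this is the unique such element of $V_A^{vf}$. I do not expect a genuine obstacle here — the argument is pure definition-chasing — the only point deserving care being the bookkeeping of the diagonal quotient defining $V_A^{vf}$ against the fixed reference $z^0$: one should read $s_0$ as the $z^0$-component of a chosen very full representative (so that $\textnormal{Zero}(s_0) = \textnormal{Zero}(z^0)$ and $[s:s_0]$ descends to $V_A^{vf}$), which is exactly what makes the rescaling ambiguity of a representative harmless and yields honest uniqueness rather than uniqueness up to a scalar.
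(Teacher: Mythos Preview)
Your proposal is correct and is exactly what the paper has in mind: the authors do not give a proof beyond the sentence ``The following proposition follows immediately from the definitions,'' and your argument is precisely the definition-chase that justifies this. Your closing remark about reconciling the diagonal quotient in $V_A^{vf}$ with the fixed reference $s_0 = z^0$ is the only point requiring any care, and you have identified it correctly.
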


By this proposition, we can equivalently consider the toric mirror superpotential as a one parameter family of 
hypersurfaces in $X^\vee$ moving in the linear system $V_A$. One advantage of this perspective is that the moduli theory of such hypersurfaces and their compactifications  has been thoroughly studied (e.g \cite{Alexeev, DKK12, GKZ}), and is combinatorial in nature. Note that the torus $T_M =M\otimes\mathbb{C}^\ast$ acts on $V_A^{vf}$ by coordinate scaling. The following theorem, adapted from Theorem 2.18 of \cite{DKK12}, describes the structure of this toric moduli space. 

\sidenote{{\color{red} Too much confusion between toric stack definition and usage here.}}
\begin{theorem}[\cite{DKK12}] \label{thm:moduli}
There exists a toric moduli stack $\mathcal{X}_{\Sigma (A )}$ for very full toric
hypersurfaces $H$ and their toric degenerations. In particular, $\mathcal{X}_{\Sigma (A )}$ is a compactification of $V^{vf}_A/T_M$. The universal hypersurface $\mathcal{H}$ is contained in a toric stack
$\mathcal{X}_{\Theta (A)}$ which admits a flat map 
\begin{equation*} \pi : \mathcal{X}_{\Theta (A)} \to \mathcal{X}_{\Sigma (A )}. \end{equation*}
The polytope $\Sigma (A)$ yielding the stack $\mathcal{X}_{\Sigma (A )}$ is the secondary polytope  of $(Q,A)$. The polytope $\Theta (A)$ yielding the stack $\mathcal{X}_{\Theta (A)}$ is a Minkowski sum of $\Sigma (A)$ with a simplex. \end{theorem}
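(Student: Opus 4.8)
The statement to establish is Theorem~\ref{thm:moduli}, which I would prove by following the blueprint of \cite{DKK12} and translating it into the present toric setup. The core idea is that a ``very full'' toric hypersurface in $X^\vee$ moving in the linear system $V_A$ is recorded, up to $T_M$-action, by a point of the projectivization $\P(V_A)$, and that the natural toric structure on this $\P(V_A)/T_M$ is exactly the one governed by the combinatorics of $(Q,A)$. First I would make precise the moduli functor: a family of very full hypersurfaces over a base $S$ is a $T_M$-torsor together with an equivariant section of $\mathcal{O}_Q(1)$ with all coefficients nonvanishing, and two such are identified by the $T_M$-action; the quotient stack $[V^{vf}_A/T_M]$ represents this functor. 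Since $V^{vf}_A$ is a torus (a quotient of $(\C^*)^{A}$ by the diagonal) and $T_M$ acts on it through the character lattice map $\delta^\vee$ of \eqref{eqn: characters exact sequence}, the quotient $[V^{vf}_A/T_M]$ is itself a torus (or torus-quotient stack), which is the open dense orbit we wish to compactify.

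Next I would identify the correct toric compactification. The key input is that degenerations of the hypersurface $H$ correspond to regular (coherent) polyhedral subdivisions of the pair $(Q,A)$, and that these subdivisions are parametrized combinatorially by the faces of the \emph{secondary polytope} $\Sigma(A)$ of $(Q,A)$ in the sense of \cite{GKZ}. Concretely, the weight function on $A$ determining a regular subdivision is exactly the data of a one-parameter degeneration of a section in $V^{vf}_A$; the normal fan of $\Sigma(A)$ is the secondary fan, whose maximal cones are the coherent triangulations and whose rays are the coarsest nontrivial subdivisions. Thus I would take $\mathcal{X}_{\Sigma(A)}$ to be the toric stack associated to the normal fan of $\Sigma(A)$ (as a stacky fan, to match the stacky conventions used for $X^\vee$), and check that it indeed contains $[V^{vf}_A/T_M]$ as its dense torus-quotient orbit. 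For the universal hypersurface, I would construct $\mathcal{X}_{\Theta(A)}$ as the toric stack whose polytope is the Minkowski sum of $\Sigma(A)$ with the standard simplex coming from the pencil structure $[s:s_0]$; the extra simplex summand accounts for the $\P^1$-direction of the family (the map to $\P^1$ in Proposition~\ref{prop: mirror pencil}), and the projection $\pi$ is induced by the Minkowski-sum projection $\Theta(A)\to\Sigma(A)$. Flatness of $\pi$ follows because the fibers are the hypersurfaces (and their degenerations) cut out by sections of a fixed line bundle, whose Hilbert polynomial is constant; alternatively one invokes that $\pi$ is a projective morphism of toric stacks with reduced, equidimensional fibers of constant arithmetic genus.

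The main obstacle, I expect, is the careful bookkeeping at the boundary: showing that the boundary strata of $\mathcal{X}_{\Sigma(A)}$ really do classify the toric degenerations of $H$ (and not some coarser or finer data), and that the universal family $\mathcal{H}\subset\mathcal{X}_{\Theta(A)}$ restricts correctly over each such stratum. This is where the stacky refinement matters — one must be careful that $A$ may be a proper subset of $N\cap Q$, so the relevant combinatorial object is the secondary polytope of the \emph{pair} $(Q,A)$ rather than of $Q$ alone, and the stack structure on $\mathcal{X}_{\Theta(A)}$ must be chosen so that the universal section genuinely has all $\{z^\alpha\}_{\alpha\in A}$ as coordinates. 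Once the combinatorial dictionary between regular subdivisions of $(Q,A)$ and cones of the secondary fan is in hand (which is classical, \cite{GKZ}), and once one verifies that a one-parameter family in $V^{vf}_A$ limiting to a boundary point degenerates the hypersurface exactly along the corresponding subdivision, the theorem follows; most of the remaining work is identifying normal fans and Minkowski sums, which is routine toric geometry. I would therefore organize the proof as: (1) set up the moduli functor and identify the open orbit; (2) recall the secondary polytope/fan and the correspondence with regular subdivisions; (3) define $\mathcal{X}_{\Sigma(A)}$ and check it compactifies the orbit; (4) define $\mathcal{X}_{\Theta(A)}$, the universal $\mathcal{H}$, and $\pi$; (5) verify flatness of $\pi$ and compatibility with degenerations.
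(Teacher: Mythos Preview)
The paper does not give its own proof of this theorem: it is quoted as Theorem~2.18 of \cite{DKK12} and only discussed, not established, here. So there is no argument in the present paper to compare your proposal against; the actual verification of the moduli-theoretic claims, the identification of the secondary polytope, and the Lafforgue-type construction of $\Theta(A)$ all live in the cited reference (with the combinatorics drawn from \cite{GKZ,Laff,KapChow,HackingModuli}).

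That said, one point in your sketch is off and would cause trouble if you tried to carry it out. You interpret the simplex summand in $\Theta(A)=\Sigma(A)+\text{simplex}$ as ``accounting for the $\P^1$-direction of the family (the map to $\P^1$ in Proposition~\ref{prop: mirror pencil}).'' This is not what that simplex does. The map $\pi:\mathcal{X}_{\Theta(A)}\to\mathcal{X}_{\Sigma(A)}$ has as fibers the \emph{ambient} toric varieties (copies of $X^\vee$ and their stable toric degenerations), inside which the universal hypersurface $\mathcal{H}$ sits; the simplex summand encodes this ambient fiber direction, as in the Lafforgue construction the paper cites. The $\P^1$ of the mirror pencil appears only afterwards, as the closure of a $K$-orbit inside the \emph{base} $\mathcal{X}_{\Sigma(A)}$ (Proposition~\ref{prop: mirror pencil} and the diagram following it). If you built $\Theta(A)$ with your interpretation, the fibers of $\pi$ would be one-dimensional and could not contain the hypersurfaces $H\subset X^\vee$. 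The rest of your outline --- identifying the dense orbit with $[V^{vf}_A/T_M]$, invoking the secondary fan to compactify, and checking flatness via constancy of the Hilbert polynomial --- is the right shape for an argument, and is consonant with what \cite{DKK12} does.
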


The normal fan of the polytope ${\Theta (A)}$ was introduced by Lafforgue \cite{Laff} for matroid polytopes. This construction is also implicit in work of Kapranov \cite{KapChow}, and the fan appears explicitly in Hacking's subsequent work \cite{HackingModuli}. The secondary polytope was introduced and thoroughly investigated in \cite{GKZ}, and a more general theory called fiber polytopes was introduced by Billera and Sturmfels \cite{BS}. This includes as a special case a polytope called the monotone path polytope which we will encounter later. We will not review the full theory of secondary or Lafforgue polytopes here, but will review the most pertinent concepts below.

The most relevant part of this structure for this paper is the 1-skeleton of the secondary polytope. The vertices correspond bijectively to {\bf convex triangulations} $T$ of $(Q,A)$. These are simply cellular decompositions of the polytope $Q$ into lattice simplices whose vertices are respectively contained in the set $A$, and subject to the condition that they arise from the projection of the linearity domains of a piecewise affine convex function. Given such a $T$, the corresponding vertex $\phi_T\in\mathbb{R}^A$ of $\Sigma (A)$ is given simply by 
\begin{equation}
\phi_T^\alpha  = \sum_{\alpha\in\sigma}\op{vol}(\sigma )
\label{eq: vertices}
\end{equation}
 where $\phi_T^\alpha$ denotes the corresponding coordinate in $\mathbb{R}^A$, and the sum is over the normalized lattice volumes of all simplices containing the lattice point $\alpha\in A$. Of particular interest to us are the edges of ${\Sigma (A)}$ which  correspond bijectively to extended circuits of $(Q,A)$. 

\begin{definition}\label{def: circuit} An (affine) {\bf circuit} in a lattice $L$ is a set $C\subset L$ which is affinely dependent over $L$, but such that every proper subset is affinely independent. If $L$ has rank $d$, an {\bf extended circuit} is any subset $\tilde{C}\subset L$ such that $|\tilde{C}| = d+2$ and which affinely spans $L$. 
\end{definition}

It is easy to check that any extended circuit contains a unique circuit. 
We may write an extended circuit explicitly by writing the affine relation $$\sum_{\alpha\in A}c_\alpha\alpha =0$$ where $\sum_{\alpha\in A}a_\alpha =0$. Let $A^+= \{\alpha\,|\, a_\alpha >0\} $, $A^-= \{\alpha\,|\, a_\alpha <0\} $, and $A^\circ=\{\alpha\,|\, a_\alpha =0\} $. The triple $(|A^+|,|A^-|;|A^\circ |)$ is an invariant of an extended circuit called the {\bf signature}, and is well-defined up to a switch of the first two entries. If $|A^\circ| = 0$, we will write $(|A^+|, |A^-|)$ for the signature and refer to the extended circuit as non-degenerate. The relationship between (extended) circuits and triangulations is that every extended circuit determines two (convex) triangulations $T^\pm$ of the polytope $Q=\op{conv}(A)$ by taking $$T^\pm = \{\op{conv}(A\setminus\{\alpha_i\}\, |\, \alpha_i\in A^\pm\}$$ as the decomposition of $Q$ into simplices with vertices contained in $A$. 


\begin{remark}A similar theory can be developed with simplicial sub-fans supported on a set of rays playing the role of triangulations of a marked polytope. Here a (linear) circuit will correspond to sets of primitive generators which are linearly dependent but such that every proper subset is linearly independent, and these linear circuits will correspond bijectively to the walls of the GKZ fan. Indeed, the entire theory of the GKZ fan of a toric variety can be developed in this language;  this was the viewpoint taken in the original work of Reid \cite{Reid83} and Oda-Park \cite{odapark}, and is discussed in detail in \cite[Section 15.3]{CLS}. 
\end{remark}

When $X$ is a toric Fano variety, the relationship between the GKZ fan and the secondary polytope is particularly clear. Let $\Delta$ be a reflexive polytope defining $X$, $\Delta^\circ$ to be the polar dual, and define the set $A= \op{vert}(\Delta^\circ )\cup\{ 0\}$. The following result is well-known, see e.g. \cite[Section 7]{Coxrecent}.

\begin{proposition} With notation as above, the enlarged GKZ fan from Proposition \ref{prop: enlarged} equals the normal fan of the secondary polytope $\Sigma (A)$. In particular, the GKZ fan of $X$ is a full-dimensional subfan of the normal fan of $\Sigma (A)$. 
\end{proposition}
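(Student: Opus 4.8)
The plan is to compare two combinatorial gadgets attached to the same data: the enlarged GKZ fan of $X$, which is by definition the GKZ (secondary) fan of $\op{Tot}(-K_X)$, and the normal fan of the secondary polytope $\Sigma(A)$ for $A = \op{vert}(\Delta^\circ) \cup \{0\}$. Both are fans associated to a set of vectors via the GKZ/secondary-fan machinery, so the strategy is to exhibit the correct identification of the underlying exact sequences (i.e. the \emph{Gale dual} data) and then invoke the uniqueness of the secondary fan attached to a configuration. First I would recall the Cox-style presentation: $X$ corresponds to the exact sequence \eqref{eqn: characters exact sequence} with the $\beta_i$ being the images of the standard basis, and the rays $\Sigma(1)$ of $X$ in $N$ are exactly the vertices of $\Delta^\circ$ (this is the content of reflexivity: the polar dual $\Delta^\circ$ is a lattice polytope and its vertices are the primitive ray generators of the fan of $X$). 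So the set $A = \op{vert}(\Delta^\circ)\cup\{0\}$ is precisely $\Sigma(1)\cup\{0\}$, which is the set appearing in the Hori--Vafa superpotential \eqref{eqn: potential}.

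Next I would identify the GKZ fan of $\op{Tot}(-K_X)$ combinatorially. By Proposition~\ref{prop: enlarged}, $\op{Tot}(-K_X)$ is a complete toric stack whose fan has rays given by $(\alpha_i, 1)$ for $\alpha_i \in \Sigma(1)$ together with $(0,1)$ — that is, the rays sit at height $1$ over the lattice points of $A$, which is exactly the standard statement that the total space of the anticanonical bundle of a Fano is the affine cone construction on $(Q,A)$ with $Q = \op{conv}(A) = \Delta^\circ$. The GKZ fan of this toric stack is governed by the Gale dual of the configuration $\{(\alpha_i,1)\} \cup \{(0,1)\}$. The key computation is then the classical one (see \cite{GKZ}, \cite{CLS}): the secondary fan of a point configuration $A \subset N$ is by definition the normal fan of the secondary polytope $\Sigma(A)$, and it coincides with the GKZ fan of the toric variety whose Cox data is the Gale transform of the vectors $(a, 1)$ for $a \in A$. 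So I would just check that the exact sequence defining $\op{Tot}(-K_X)$ as a GIT quotient is the same as the one produced from the configuration $A$, after which the equality of fans is a citation. The second sentence of the proposition is then immediate from Proposition~\ref{prop: enlarged}: the GKZ fan of $X$ is a (full-dimensional, since $X$ is a phase) subfan of the GKZ fan of $\op{Tot}(-K_X)$, hence of the normal fan of $\Sigma(A)$.

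The main obstacle is bookkeeping the dimensions and the ``height $1$'' shift correctly: one must be careful that the relevant configuration for $\Sigma(A)$ is the \emph{marked} polytope $(Q,A)$ with its lattice points (or rather the subset $A$) sitting at height $1$, and that the torus factor introduced by the $0$ vertex and by passing to the total space line bundle match up — this is exactly the ``$A$ may be a proper subset of the lattice points of $Q$'' subtlety flagged in the remarks, but for a reflexive $\Delta$ one has $A = \op{vert}(\Delta^\circ)\cup\{0\}$ precisely equal to $N\cap\Delta^\circ$ minus the non-vertex boundary points, and the statement is still clean because $\op{Tot}(-K_X)$ is built from $\Sigma(1)\cup\{0\}$ and not from all lattice points. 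Once the configurations are matched, everything else is the standard dictionary between secondary fans, GKZ fans, and Gale duality, together with the well-known fact that for a Fano the fan of $X$ itself arises from the triangulation of $\Delta^\circ$ by coning over its faces from the origin, which is one of the vertices of the secondary polytope.
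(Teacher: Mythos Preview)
The paper does not actually prove this proposition: it states the result as ``well-known'' and simply cites \cite[Section 7]{Coxrecent}. Your sketch is therefore not competing against a proof in the paper but rather supplying one where the authors chose to defer to the literature.

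Your outline is along the standard lines and is essentially correct. The identification of the rays of $\op{Tot}(-K_X)$ with the height-$1$ lifts $\{(\alpha,1):\alpha\in A\}\subset N\oplus\Z$ is exactly right, and the core step---that the secondary fan of a configuration $A$ (i.e.\ the normal fan of $\Sigma(A)$) coincides with the GKZ/GIT fan attached to the Gale dual of the height-$1$ lifts---is the classical Oda--Park/GKZ statement (see \cite{odapark}, \cite{GKZ}, and the exposition in \cite[Ch.~14--15]{CLS}); once that is invoked, the second sentence follows from Proposition~\ref{prop: enlarged} as you say. The only place your write-up wobbles is the final paragraph: the remark about $A$ possibly being a proper subset of $N\cap Q$ is not really the obstacle here, and your parenthetical about ``$N\cap\Delta^\circ$ minus the non-vertex boundary points'' is beside the point---the secondary polytope is defined for the marked pair $(Q,A)$ regardless of whether $A$ exhausts the lattice points, and the Cox presentation of $\op{Tot}(-K_X)$ uses exactly $\Sigma(1)\cup\{0\}=A$, so there is nothing to reconcile. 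I would simply drop that paragraph and replace it with a one-line citation verifying that the Gale dual of $\{(\alpha,1):\alpha\in A\}$ agrees with the character group $\widehat{G}$ appearing in the paper's exact sequence for $\op{Tot}(-K_X)$.
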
This proposition will be used at least implicitly many times in order to identify walls in the GKZ fan (B-side) with circuits on the mirror ($A$-side). 

Let us now interpret the above combinatorial constructions geometrically. Following \cite{Alexeev}, one associates to $T$ (or more generally any face of $\Sigma (A)$) a {\bf stable toric variety} which is a possibly reducible variety whose components are the toric stacks corresponding to the respective cell of the subdivision of $Q$, and which are glued together along boundary strata according to the combinatorics of the subdivision. A fiber of $\mathcal{H}\rightarrow\mathcal{X}_{\Sigma (A )}$ is a {\bf degenerated hypersurface}; namely a collection of reducible hypersurfaces in each of the components of a stable toric variety with the respective gluing. The structure of a degenerated hypersurface is closely related to the corresponding tropical variety; we review this in \ref{subsubsec: tropical}.

A specific consequence of the above theorem is that one can realize a mirror pencil from \ref{prop: mirror pencil} as the pullback along a map $\iota : \mathbb{P}^1 \to \mathcal{X}_{\Sigma (A)}$ as in the diagram below
\begin{equation}
 \begin{CD} M \otimes \mathbb{G}_m @>i>> X^\circ \backslash B @>\iota^*>> \mathcal{H} \\
 @V\mathbf{w}VV @V{[s : s_0]}VV @V{\pi |_\mathcal{H}}VV \\
 \mathbb{C} @>>> \mathbb{P}^1 @>\iota>> \mathcal{X}_{\Sigma (A)}
 \end{CD}
\end{equation}

We use this description to construct the moduli space of LG models; that is, the moduli space of mirror pencils. 
\begin{remark}The theory of maps from a sphere to a symplectic variety is quite rich and, were $\mathcal{X}_{\Sigma (A)}$ 
a smooth variety (or DM stack), we could study the structure of the (twisted) stable map compactification
$\mathcal{M}_{0} (\mathcal{X}_{\Sigma (A)}, \iota (\mathbb{P}^1))$ in order to give a useful description of the space of all such mirror pencils. Unfortunately, it is quite rare that $\mathcal{X}_{\Sigma (A)}$ is smooth even as a stack. The approach we give instead better reflects the underlying combinatorics.\end{remark}
\sidenote{{\color{red} References for stable maps maybe?}}

Let  \begin{equation} \rho : V^{vf}_A \to \mathcal{X}_{\Sigma (A)} \end{equation} be the map which is the composition of the quotient $V^{vf}_A\rightarrow V^{vf}_A/T_M$ with the inclusion of $V^{vf}_A/T_M$ onto the dense torus of $\mathcal{X}_{\Sigma (A)}$.  Now, let $K = \mathbb{G}_m$ be the one parameter subgroup of the open torus of $\mathcal{X}_{\Sigma (A)}$ which acts via \begin{equation}
 \lambda \cdot \rho \left( \sum_{\alpha \in A} c_\alpha s_\alpha \right) = \rho \left( \lambda c_0 s_0 + \sum_{0
\ne \alpha \in A}  c_\alpha s_\alpha \right) .\end{equation}
By varying $s=\sum_{\alpha\in A}c_\alpha s_\alpha$ and comparing with the definition of a mirror pencil as in Proposition~\ref{prop: mirror pencil}, we obtain the following useful characterization. 

\begin{proposition}
 Any mirror pencil for $X$ is represented by the closure of a $K$-orbit in $\mathcal{X}_{\Sigma (A)}$.
\end{proposition}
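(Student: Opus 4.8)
The plan is to unwind the definition of a mirror pencil and compare it directly with the formula defining the $K$-action. Fix a mirror pencil for $X$; by Proposition~\ref{prop: mirror pencil} it is the pencil $[s:s_0]$ attached to a uniquely determined very full section $s=\sum_{\alpha\in A}c_\alpha s_\alpha\in V^{vf}_A$ (so every $c_\alpha\neq 0$) together with $s_0=z^0$. By Theorem~\ref{thm:moduli} and the diagram above, this pencil is realized as a morphism $\iota\colon\mathbb{P}^1\to\mathcal{X}_{\Sigma(A)}$ sending $[a:b]$ to the point that parametrizes the (possibly degenerate) hypersurface $\{bs-as_0=0\}$, and which, on the locus where $bs-as_0$ is still very full, is given simply by applying $\rho$ to $bs-as_0$. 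First I would record the identity $bs-as_0=(bc_0-a)\,s_0+\sum_{0\neq\alpha\in A}bc_\alpha\,s_\alpha$: for $b\neq 0$ every coefficient of $bs-as_0$ is nonzero except possibly the coefficient $bc_0-a$ of $s_0$, so $bs-as_0$ lies in $V^{vf}_A$ exactly when $a\neq bc_0$.

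The crux is the comparison with the $K$-action. Because $\rho$ factors through the quotient by $T_M$ it is $T_M$-invariant, and because $V^{vf}_A$ is taken modulo the diagonal, $\rho$ is also invariant under the overall rescaling of the coefficient vector. Hence, for $b\neq 0$, I may divide $bs-as_0$ through by $b$ to obtain $\rho(bs-as_0)=\rho\bigl((c_0-ab^{-1})\,s_0+\sum_{0\neq\alpha}c_\alpha\,s_\alpha\bigr)$; comparing this with the defining identity $\lambda\cdot\rho\bigl(\sum_\alpha c_\alpha s_\alpha\bigr)=\rho\bigl(\lambda c_0\,s_0+\sum_{0\neq\alpha}c_\alpha\,s_\alpha\bigr)$ shows that $\rho(bs-as_0)=\lambda\cdot\rho(s)$ with $\lambda=1-a(bc_0)^{-1}$. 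This $\lambda$ lies in $\mathbb{G}_m$ precisely on the locus $a\neq bc_0$ found above, and as $[a:b]$ ranges over $U:=\{[a:b]\in\mathbb{P}^1 : b\neq 0,\ a\neq bc_0\}$ the parameter $\lambda$ sweeps out all of $\mathbb{G}_m$. Thus $\iota$ carries $U$ --- a dense open subset of $\mathbb{P}^1$ whose complement is the two points $[1:0]$ and $[c_0:1]$ --- onto the single orbit $K\cdot\rho(s)$, with $\iota(U)=K\cdot\rho(s)$ exactly.

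To conclude I would pass to closures. The pencil $[s:s_0]$ genuinely varies, so $\iota$ is non-constant; hence $\iota(\mathbb{P}^1)$ is an irreducible curve and $\iota(U)=K\cdot\rho(s)$ is a one-dimensional orbit. Since $\mathbb{P}^1$ is complete, $\iota(\mathbb{P}^1)$ is closed in $\mathcal{X}_{\Sigma(A)}$, so it contains $\overline{K\cdot\rho(s)}$; conversely $\iota(\mathbb{P}^1)=\iota(\overline{U})\subseteq\overline{\iota(U)}=\overline{K\cdot\rho(s)}$ by continuity of $\iota$, and therefore $\iota(\mathbb{P}^1)=\overline{K\cdot\rho(s)}$, which is exactly the claimed representation. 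As a sanity check, by continuity the two remaining points $\iota([1:0])$ and $\iota([c_0:1])$ must be the $\lambda\to\infty$ and $\lambda\to 0$ limits of the orbit, i.e.\ the two endpoints of $\overline{K\cdot\rho(s)}$. The one place that genuinely needs care --- the main obstacle, such as it is --- is the input from Theorem~\ref{thm:moduli}: that on the very full locus the moduli morphism $\iota$ is computed literally by applying $\rho$ to the coefficient vector, and that $\rho$ carries the stated $T_M$- and diagonal-scaling invariances. Everything downstream of that is the bookkeeping sketched above.
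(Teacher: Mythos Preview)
Your proof is correct and takes essentially the same approach as the paper: the paper does not give a separate proof at all, merely stating that the proposition follows ``by varying $s=\sum_{\alpha\in A}c_\alpha s_\alpha$ and comparing with the definition of a mirror pencil,'' and your argument is precisely that comparison carried out in full detail. The explicit parameter match $\lambda = 1 - a(bc_0)^{-1}$ and the closure argument are exactly what the paper leaves implicit.
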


Being a one-parameter subgroup, $K$ corresponds equivalently to a cocharacter; let us now describe it explicitly. Consider the exact sequence $$0\to M\oplus\mathbb{Z}\overset{\tilde{\delta}}{\to}\mathbb{Z}^A\overset{\tilde{\gamma}}{\to} \widehat{G}\to 0$$ where $\tilde{\delta}(m,\lambda ) = \sum_{\alpha\in A}(\langle m,\alpha\rangle+\lambda )$, and $\widehat{G}$ is defined to be the cokernel. This exact sequence is just the $A$-side analogue of \eqref{eqn: characters exact sequence}.
 Recall that in our situation $A=\op{vert}(\Delta )\cup\{0\}$, and let $A' = \op{vert}(\Delta )$, which we may identity with the primitive generators of the fan defining the original toric Fano.

Comparing with the exact sequence \eqref{eqn: characters exact sequence}, consider the commutative diagram 
\begin{equation*}\begin{CD} 0 @>>> M @>\delta>> \mathbb{Z}^{A'} @>\gamma>> \widehat{G} @>>> 0 \\ @. @V{i_1}VV @V{i_2}VV @V=VV @. \\0 @>>> M \oplus \mathbb{Z} @>{\tilde{\delta}}>> \mathbb{Z}^{A} @>{\tilde{\gamma}}>> \widehat{G} @>>> \end{CD}\end{equation*}where $i_1$ is inclusion in the first factor, $i_2$ is the inclusion $\mathbb{Z}^{A'}\subset\mathbb{Z}$, and\begin{equation} \tilde{\delta} (m , k) = i_2(\delta (m)) + k \sum_{\alpha \in A} e_\alpha . \end{equation} 
The cocharacter of $K$ acting on $\mathbb{Z}^A \otimes \mathbb{G}_m$ (before applying $\rho$) is clearly $e_0$. This descends via $\tilde{\gamma}$ to the cocharacter $\chi = \tilde{\gamma} (e_0)$. 

\sidenote{{\color{red} $\chi$ cannot be a chararacter and a cocharacter}}
\begin{remark} Note that as an element of $\widehat{G}$, the cocharacter $\chi$ corresponding to the subgroup $K$ coincides with the anticanonical divisor of the original variety $X$. \end{remark}

It is thus natural to define the space of mirror pencils to be a quotient of the stack $\mathcal{X}_{\Sigma (A)}$ by $K$. This requires the correct notion of quotient. The most structured approach is to use the exact sequences above, along with the generalized notion of a toric stack developed in \cite{GS}, to take this quotient at the level of generalized stacky fans. If we only regard the toric pencils as equivariant cycle classes determined by $\iota (\mathbb{P}^1)$, we may take a more classical approach.

\begin{definition} The space of LG mirrors $\langinz$ is the toric Chow quotient of $X_{\Sigma (A)}$ by $K$.
\end{definition}

For the definition and properties of this quotient, see \cite{KSZ}. Note that in general this will be a non-normal toric variety. It was observed in \cite{CS12} that this space admits an equivalent description in terms of log stable maps. It is important to note that the polytope corresponding to $\langinz$ was discussed as a purely combinatorial object in \cite{BS}, where it was dubbed the {\bf monotone path polytope}. More precisely, this polytope is the monotone path polytope of $\Sigma (A)$ relative to the hyperplane. The most important feature of this polytope is that its vertices correspond to edge paths on the secondary polytope. Intuitively, the cocharacter $\chi = \tilde{\gamma} (e_0)$ determines a linear functional on the space $\widehat{G}_\mathbb{R}$ which contains the secondary polytope, and the edge paths are those which move ``downward" with respect to this linear functional.

Taking a page from mirror symmetry of Calabi-Yau varieties, we consider Homological Mirror Symmetry of the original
nef-Fano stack as taking place near the toric fixed points in $\langinz$; that is, near the large complex structure limit. 

\begin{definition}
 A torus fixed point of $\langinz$ is called a maximally degenerated LG mirror.
\end{definition}

The following theorem relates these degenerated LG models to their $B$-side mirrors, and makes precise the notion that the edge paths specified by vertices of the monotone path polytope are mirror to runs of the Mori program. A complete proof of this can be found in
\cite{DKK12}, 

\begin{theorem}[\cite{DKK12}]
Maximal degenerated LG mirrors are in bijection with straight line run of the toric Mori program on $X$. 
\end{theorem}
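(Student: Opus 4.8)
The plan is to establish a bijection by unwinding both sides into the same combinatorial data and then matching term-by-term. On the $A$-side, a maximally degenerated LG mirror is by definition a torus fixed point of $\langinz = X^{LG}$, the toric Chow quotient of $\mathcal{X}_{\Sigma(A)}$ by $K$. Since $\langinz$ is the toric variety associated to the monotone path polytope of $\Sigma(A)$ relative to the linear functional determined by the cocharacter $\chi = \tilde{\gamma}(e_0)$, its torus fixed points correspond bijectively to the vertices of that monotone path polytope. By the theory of fiber polytopes (Billera--Sturmfels \cite{BS}), the vertices of the monotone path polytope of $\Sigma(A)$ relative to $\chi$ are precisely the $\chi$-monotone edge paths on the secondary polytope $\Sigma(A)$ — that is, sequences of edges of $\Sigma(A)$ along which the functional $\chi$ is strictly decreasing (more precisely, coherent monotone paths, but in the toric setting these are all of them). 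So the first step is: maximally degenerated LG mirrors $\leftrightarrow$ $\chi$-monotone coherent edge paths on $\Sigma(A)$.

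The second step is to recall the combinatorial dictionary set up earlier in the excerpt: vertices of $\Sigma(A)$ are convex triangulations of $(Q,A)$, and edges of $\Sigma(A)$ are extended circuits of $(Q,A)$, with each edge connecting the two triangulations $T^\pm$ it induces. Via the proposition identifying the enlarged GKZ fan with the normal fan of $\Sigma(A)$, the vertices of $\Sigma(A)$ correspond to the maximal cones (chambers/phases) of the enlarged GKZ fan, edges correspond to walls, and the functional $\chi$ on $\widehat{G}_\mathbb{R}$ is exactly the (anti)canonical functional, i.e. $\chi$ pairs with a wall's one-parameter subgroup $\lambda$ so that $\langle\lambda,\chi\rangle$ recovers $-\mu = \langle\lambda, K_{[\mathbb{A}^n/G]}\rangle$ up to sign. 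Under this identification, a $\chi$-monotone edge path starting at the chamber containing $-K_X$ and exiting the pseudo-effective cone is exactly a path $\gamma$ through chambers of the GKZ fan that crosses only walls with $\mu \le 0$ and never returns upward — which is the definition of a straight line run of the toric Mori program (Definition of ``run of the toric Mori program'' together with Lemma~\ref{lemma: straight} and Proposition~\ref{prop: existence}, which guarantee such straight line runs realize every phase and that straightness is the right notion of ``monotone'').

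The third step is to nail down the two directions of the bijection. Given a straight line run $\gamma$ of the toric Mori program on $X$: the sequence of chambers it visits is a sequence of vertices of $\Sigma(A)$, and consecutive chambers are separated by a single wall (since $\gamma$ avoids codimension-$2$ cones), hence joined by an edge of $\Sigma(A)$; the $\mu \le 0$ condition at each wall, combined with straightness, forces $\chi$ to be strictly decreasing along this path, so it is a $\chi$-monotone coherent edge path, hence a vertex of the monotone path polytope, hence a maximally degenerated LG mirror. Conversely, given a torus fixed point of $\langinz$, read off the corresponding $\chi$-monotone edge path on $\Sigma(A)$, translate it through the GKZ-fan dictionary into a sequence of chambers separated by single walls, observe that $\chi$-monotonicity gives $\mu \le 0$ at each crossing, and check that the resulting piecewise path can be straightened (again invoking Lemma~\ref{lemma: straight}, or rather the fact that coherent monotone paths are exactly those realized by generic linear sweeps) to produce a straight line run. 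One should verify these two constructions are mutually inverse, which is essentially bookkeeping once the dictionaries are in place.

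I expect the main obstacle to be the precise matching between ``coherent monotone path on $\Sigma(A)$ relative to $\chi$'' and ``straight line run of the toric Mori program,'' because there is a genuine subtlety about coherence/regularity: not every monotone edge path is coherent, and one must confirm that the combinatorial notion of straight line run (a Euclidean line segment sweeping through the GKZ fan) corresponds exactly to the coherent ones picked out by the monotone path polytope construction, rather than to all monotone paths. This is where Lemma~\ref{lemma: straight} and the genericity arguments in its proof do the real work — showing that a generic line segment realizes a coherent monotone path and, conversely, that every vertex of the monotone path polytope (equivalently every coherent monotone path) is realized by such a segment. The rest — identifying edges of $\Sigma(A)$ with walls, vertices with chambers, and the $\chi$ functional with the anticanonical pairing — is the already-assembled dictionary from Section~\ref{sec: A side} and \cite{DKK12}, so it should go through routinely.
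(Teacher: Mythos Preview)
The paper does not give its own proof of this theorem; it is stated with a citation to \cite{DKK12} for the full argument. Your outline is therefore being judged against the architecture implicit in Section~\ref{sec: A side} rather than an explicit proof in the text.

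Your skeleton is the right one and matches what the surrounding discussion sets up: torus fixed points of $\langinz$ are vertices of the monotone path polytope (standard toric geometry), vertices of the monotone path polytope are coherent $\chi$-monotone edge paths on $\Sigma(A)$ (Billera--Sturmfels \cite{BS}), and the normal-fan duality between $\Sigma(A)$ and the enlarged GKZ fan translates such edge paths into chamber sequences in $\widehat{G}_\mathbb{R}$, with $\chi$-monotonicity becoming the condition $\mu\le 0$ via the identification of $\chi$ with $-K_X$. You are also right that the coherence issue is where the real content lies.

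One correction: Lemma~\ref{lemma: straight} is not what handles the coherence step, and neither is Proposition~\ref{prop: existence}. Lemma~\ref{lemma: straight} only says that some line segment between two prescribed chambers exists avoiding codimension-two cones; it says nothing about which chamber sequences are realized by line segments, which is exactly the coherence question. The equivalence ``coherent monotone path $\Leftrightarrow$ arises from a generic linear sweep'' is intrinsic to the fiber-polytope construction itself: a vertex of the monotone path polytope is, by definition, selected by a generic linear functional on the fiber direction, and that functional determines a generic line in the normal-fan space $\widehat{G}_\mathbb{R}$ whose chamber sequence is the path. So cite \cite{BS} (or the relevant argument in \cite{DKK12}) for this step rather than Lemma~\ref{lemma: straight}. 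You should also explicitly check the endpoints: a full $\chi$-monotone path runs between the $\chi$-extremal vertices of $\Sigma(A)$, and you need these to correspond to the ample chamber of the nef-Fano $X$ at one end and the exterior of the effective cone at the other --- this is true but not automatic from what you have written.
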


Let us now describe the geometry of maximally degenerated LG mirrors in more detail. As mentioned above, they in particular correspond to a collection of edge paths on the secondary polytope, and thus a sequence of circuits. Geometrically, to every maximal degeneration $\psi\in\langinz$ there is thus an associated 1-cycle \begin{equation} Z_\psi = \cup_{i = 1}^m a_i Z_{\psi , i}
\end{equation} on $\mathcal{X}_{\Sigma (A)}$. Equivalently, this is a degeneration of the generic mirror pencil \ref{eqn: potential} into a (weighted) union of one dimensional toric boundary strata. The coefficients of the cycle can be seen by taking $\{\psi (t) \}_{t \in [0,1]} \subset \langinz$ to be a one dimensional
family of LG pencils with $\psi (0) = \psi$ a maximal degeneration, and observing a bubbling phenomena for the corresponding maps
$\psi (t) : \mathbb{P}^1 \to \mathcal{X}_{\Sigma (A)}$ into $a_i$-ramified maps 
\begin{equation}
 \psi_i (0) : \mathbb{P}^1 \to Z_{\psi , i} .
\end{equation}
By the description of the universal hypersurface $\mathcal{H}$ over $\op{X}_{\Sigma (A)}$, we thus see that $\psi$ defines a chain of (stacky) $\mathbb{P}^1$'s, which is equipped with a family of degenerated hypersurfaces. Over the nodes of this chain and over the two torus fixed points at the end of the chain the hypersurface is typically reducible and has irreducible components consisting of (stacky) pairs of pants glued according to the corresponding triangulation. The generic fiber over a components of the chain will likewise be determined by the circuit determined by the two triangulations of the respective endpoints. The advantage of these degenerate pencils is that they are essentially entirely combinatorial. In practice, we will need Lefschetz pencils which approximate these degenerate pencils. This can be done by considering a point in $\langinz$ sufficiently close to the corresponding fixed point. We will call such a pencil a {\bf regeneration} of $\psi$. In practice, there are 
subtleties due to the non-trivial stack structure of $\mathcal{X}_{\Sigma (A)}$, see Section 4.3 of \cite{DKK12} for details. 

There are prescribed asymptotics in $t$ for annuli $B_i$ in the domain of $\psi (t)$ for which $\psi (t)|_{B_i}$
converges to the components $\psi_i (t)$ (\cite{DKK12}). These asymptotics separate the critical values of $\psi (t)$
into subsets $\{S_1, \ldots, S_m\}$. Furthermore, as $\psi_i (t)$ converges to an $a_i$-fold cover of $Z_i$, each subset $S_i$ of critical values further partitions into $a_i$ groups $\{S_{i1}, \ldots, S_{i a_i} \}$ corresponding to the branches of the cover. 
To summarize, given a maximal degeneration  $\psi (0) = \psi\in\langinz$, and a one-parameter family of LG-models approaching it, we obtain a decomposition of the plane into successive annuli such that:
\begin{itemize}
\item There is one annulus for each circuit appearing in the edge path on $\Sigma (A)$ determined by $\psi$

\item If an edge appears with multiplicity, the corresponding annulus is divided into the corresponding number of radial regions.

\end{itemize}

The number of critical points in each such region then depends on the further geometry of the corresponding circuit. In particular, it is 1 unless the circuit is what is known as a degenerate circuit. To compute the multiplicities $a_i$, let $\textnormal{Stab} (Z_i)$ be the subtorus in $\widehat{G} \otimes \C^*$ that stabilizes $Z_i$. Then $a_i$ equals the order of the group $\ker (\eta_i )$ where
\begin{equation} \eta_i : K \to \widehat{G} / \textnormal{Stab}(Z_i ).  \end{equation}
The computation of this order amounts to evaluating the cocharacter of $K$ with a generator for the affine edge $\{T_i, T_{i + 1}\}$ in the secondary polytope $\Sigma (A)$ corresponding to $Z_i$. In particular, if $\varphi_{T_i}$ and $\varphi_{T_{i + 1}}$ are the vertices on the edge $\Sigma (A)$ regarded as a polytope in $\Z^A$ and their difference $\varphi_{T_{i + 1}} - \varphi_{T_i}$ is not a nontrivial multiple of any lattice point, then 
\begin{equation}
\label{eq:mult form} a_i = \left< e_0 , \varphi_{T_{i + 1}} - \varphi_{T_i} \right>
\end{equation}
The total collection of the decomposition into annuli and the distinguished basis is called a {\bf radar screen}. We will see an explicit example in Figure~\ref{fig:tropical Morse mirror}.

\begin{remark}
For a run of the Mori program, let $(W_i, \mu_i)$ be the wall data associated to an affine edge $\{T_i, T_{i + 1}\}$. It is shown in \cite{DKK12} that
\[
a_i = - \mu_i\cdot \text{dim K}_0(W_i).
\]
When the corresponding extended circuit is non-degenerate $a_i = - \mu_i$.  Notice that there's a minus sign due to an altercation among sign conventions in \cite{DKK12} and \cite{BFK12}.  In addition, the locus $W$ is determined by the locus where the fan is changed nontrivially i.e. the support of the circuit (see \cite[Section 15.3]{CLS}). 
\label{rem: matching up}
\end{remark}

We may use this result to write down a conjectural mirror
for the semi-orthogonal decomposition given in Theorem \ref{thm: SOD on B-side}. Strictly speaking, if $X'$ is a nef-Fano and $X$ is a phase, the theorem below gives the conjectural mirror semi-orthogonal decomposition of $\dbcoh{X}$, of which a subset of components specified by omitting the wall contributions from $X$ to $X'$ yields $\dbcoh{X'}$.

\begin{theorem}[\cite{DKK12}]\label{thm: fs decomposition} For every maximal degenerated LG mirror $\psi$ of a family $\psi (t)$ there is a
semi-orthogonal decomposition 
\[
\langle \overbrace{ \fs{\psi_1}, \ldots , \fs{\psi_1 }}^{-\mu_1},
\ldots , \overbrace{\fs{\psi_s } , \ldots, \fs{\psi_s }}^{-\mu_s} \rangle.
\]
of the Fukaya-Seidel category $\fs{\psi  (t)}$ for $t \approx 0$. Here the $\psi_i$ are regenerations of the singular pencils $Z_i$. \end{theorem}

\begin{remark} \label{rem: mirror to d}
The integer $d$ from Theorem~\ref{theorem: SOD of derived categories for toric variations} is mirror to a monodromy transformation around a degenerate fiber of $Z_i$ (i.e. a fiber over a torus fixed point of the base stacky $\mathbb{P}^1$), where $Z_i$ is the pencil corresponding to the circuit which is mirror to the corresponding wall crossing. In terms of the radar screen, this amounts to a rotation of the corresponding annulus and hence yields a different semi-orthogonal decomposition of the Fukaya-Seidel category $\fs{\psi  (t)}$ for $t \approx 0$.
\end{remark}

\section{The Mori program meets Homological Mirror Symmetry, torically} \label{sec: program}

\subsection{General method} \label{sec: general method}

We begin this section by recalling our central conjecture \ref{conj:main conjecture}. Following the notation of the previous sections, we let $X$ be a projective toric DM stack, $X^\prime$ be a projective toric nef-Fano DM stack with a straight line
run for $X$ containing $X$ as a phase. We take $\psi = \psi (0)$ to be the degenerated LG mirror corresponding to the Mori program run and $\{\psi (t) \}$ a degenerating family. 
 \begin{conjecture} \label{conj:main conjecture} Given any maximal degeneration $\psi$ of an LG mirror pencil, let 
\begin{eqnarray*} \op{Fuk}^{\rightharpoonup} (\psi (t) ) & = & \langle \mathcal{T}_1, \ldots, \mathcal{T}_r \rangle , \\ \dbcoh{X}
& = & \langle \mathcal{S}_1, \ldots, \mathcal{S}_r \rangle
\end{eqnarray*} be the semi-orthogonal decompositions associated to $\psi$ and its run of the toric Mori program.

Then there exists an equivalence of triangulated categories
\begin{equation*} \Phi_\psi : \op{Fuk}^{\rightharpoonup} (\psi (t)) \to \dbcoh{X} \end{equation*}
which restricts to equivalences $\Psi_\psi : \mathcal{T}_i \to \mathcal{S}_i$ for all $1 \leq i \leq r$.
\end{conjecture}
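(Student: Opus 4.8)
\textbf{Proof strategy for Conjecture~\ref{conj:main conjecture} in the case of $X$.}
The plan is to execute the program outlined above for the explicit threefold blow-up $X$ of $\mathbb{P}^2$ at three infinitesimally near points, following the general strategy: first build the $B$-side semi-orthogonal decomposition from a run of the toric Mori program, then build the $A$-side radar screen from the corresponding edge path on the secondary polytope, and finally match the two term-by-term by comparing $A_\infty$-algebras. Concretely, I would first fix the presentation $X \cong \mathbb{A}^5 \modmod{\chi} (\mathbb{G}_m)^3$ via the Cox ring, identify a nef-Fano phase $X'$ (here one can take $X'$ to be a smooth toric del Pezzo surface, e.g.\ $\mathbb{P}^2$ blown up at fewer points, or a Hirzebruch surface) together with a straight line run $\gamma$ from $X'$ through $X$ and out of the pseudo-effective cone, as guaranteed by Proposition~\ref{prop: existence}. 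Then I would apply Theorem~\ref{thm: SOD on B-side} to write
\[
\dbcoh{X} = \langle \mathcal{S}_1, \ldots, \mathcal{S}_r \rangle,
\]
where the $\mathcal{S}_i$ are (copies of) the derived categories of the wall loci $W_i$, which in this low-dimensional setting are either points or toric curves, so each $\mathcal{S}_i$ is generated by an explicit exceptional collection of line bundles. This realizes $\dbcoh{X}$ as $\op{D}(\op{perf} A)$ for an explicit quiver-with-relations $A$ whose higher $A_\infty$-structure I would compute using the homological perturbation formula \eqref{eq: m3}; for this example the relevant degrees should force $m_n = 0$ for $n \geq 3$, so $A$ is formal.

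On the $A$-side, I would use Theorem~\ref{thm:moduli} together with the monotone path polytope description to identify the maximally degenerated LG mirror $\psi$ corresponding to $\gamma$, which yields a chain of (stacky) $\mathbb{P}^1$'s, each carrying a degenerated hypersurface determined by a circuit, and the radar screen partitioning the critical values of $\psi(t)$ into annuli $S_1, \ldots, S_r$ with multiplicities given by \eqref{eq:mult form}. For each annulus I would compute the Fukaya-Seidel category $\fs{\psi_i}$ of the corresponding regeneration explicitly --- since the $Z_i$ here are pencils on toric surfaces (or curves), the regenerated Lefschetz fibrations have few critical points and the vanishing cycles can be drawn directly. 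I would compute the directed $A_\infty$-algebra generated by the Lefschetz thimbles (morphism spaces given by intersection Floer homology of the vanishing cycles, products given by counts of holomorphic polygons), using the tropical/combinatorial model for $\mathcal{H}$ to organize the count, and check that it agrees with the quiver-with-relations and trivial higher products obtained on the $B$-side. This gives $\fs{\psi_i} \cong \mathcal{S}_i$ for each $i$.

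The global matching is then assembled from Theorem~\ref{thm: fs decomposition} on the $A$-side and Theorem~\ref{thm: SOD on B-side} on the $B$-side: both give semi-orthogonal decompositions with the same number of components and the same multiplicities (by Remark~\ref{rem: matching up}, which matches $a_i$ with $-\mu_i$ for non-degenerate circuits), so it suffices to check that the ``gluing'' morphisms between consecutive components agree. These gluing morphisms are encoded by the off-diagonal blocks of the total $A_\infty$-algebra; on the $A$-side they come from holomorphic strips crossing between annuli (equivalently, from the monodromy/parallel-transport data in the radar screen), and on the $B$-side from the connecting maps in the iterated semi-orthogonal decomposition of Theorem~\ref{theorem: SOD of derived categories for toric variations}. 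I expect the main obstacle to be precisely this last step: controlling the full $A_\infty$-structure of the Fukaya-Seidel category globally (not just the individual pieces), since this requires a careful count of holomorphic discs in the regenerated pencil that interpolate between the annuli, and matching the signs and the choice of the integer $d$ from Theorem~\ref{theorem: SOD of derived categories for toric variations} (which by Remark~\ref{rem: mirror to d} is a rotation of the corresponding annulus) against a compatible choice on the $B$-side. A secondary technical difficulty is handling the stacky structure of $\mathcal{X}_{\Sigma(A)}$ when passing from the degenerate pencil to a genuine Lefschetz pencil (the ``regeneration'' subtleties of Section~4.3 of \cite{DKK12}), which must be done with enough care that the critical-point count in each annulus is exactly the rank of the corresponding exceptional collection.
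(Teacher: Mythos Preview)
Your outline follows the paper's general strategy, but there are two concrete errors that would derail the computation.

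First, a bookkeeping mistake: the Cox presentation of $X$ is not $\mathbb{A}^5 \modmod{\chi} (\mathbb{G}_m)^3$. The fan $\Sigma$ has six rays, so the Cox ring is $\C[x_0,\ldots,x_5]$ and the torus acting is $G \cong (\mathbb{G}_m)^4$ (Picard rank $6-2=4$). Getting these numbers wrong will propagate through every wall computation and the description of the exceptional collection.

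Second, and more seriously, your expectation that ``the relevant degrees should force $m_n = 0$ for $n \geq 3$, so $A$ is formal'' is false for this example, and this is precisely where the content lies. The exceptional collection produced by the run is \emph{not} strong: two of the generators, $d_4$ and $d_5$, lie in degree $1$, so degree considerations only kill $m_n$ for $n \geq 4$, leaving room for a nontrivial $m_3$. And indeed $m_3$ is nonzero on both sides. On the $B$-side one must pass to a dg-resolution of the non-line-bundle objects (e.g.\ replace $\mathcal O_{D_3+D_4+D_5}$ by a two-term complex of line bundles) and run homological perturbation via \eqref{eq: m3}; this produces six nontrivial $m_3$'s such as $m_3^A(d_4,t,b_2) = c_4 t b_0$. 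On the $A$-side the same $m_3$'s appear as counts of six holomorphic quadrilaterals among the vanishing cycles. Matching these higher products is exactly the ``gluing'' step you flag as the main obstacle, and it cannot be finessed by a formality argument. If you had assumed formality you would have obtained the wrong triangulated category on the $B$-side and no equivalence would exist.

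A minor point: in this example every wall locus $W_i$ is a point (no toric curves appear), and the nef-Fano neighbor $X'$ is the specific surface obtained by deleting the ray $(1,1)$, not an arbitrary del Pezzo.
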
 

In addition to being a statement of Homological Mirror Symmetry for toric varieties,  the conjecture is formulated to suggest a rich correspondence between decompositions of the categories involved, the geometry of the space of LG models, and birational geometry.

Before outlining a strategy to prove this conjecture, we cite some evidence in its favor. It was proven in \cite{DKK12} that the conjecture holds at the level of $K$-theory ranks. 

\begin{proposition}[\cite{DKK12},5.17] 
In the decompositions of Conjecture \ref{conj:main conjecture} 
\begin{equation}
 \textnormal{rank} (K_0 (\mathcal{T}_i )) = \textnormal{rank} (K_0 (\mathcal{S}_i ))
\end{equation}
 for all $1 \leq i \leq r$.
\end{proposition}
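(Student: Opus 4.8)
The plan is to reduce the proposition to a block-by-block comparison of ranks and then to read off the answer from the radar-screen combinatorics recalled in Section~\ref{sec: A side}. Both decompositions in Conjecture~\ref{conj:main conjecture} come from the \emph{same} straight line run $\gamma$ of the toric Mori program for $X$, crossing walls $\tau_1,\dots,\tau_s$ with wall data $(W_1,\mu_1),\dots,(W_s,\mu_s)$. By Theorem~\ref{thm: SOD on B-side} the $B$-side list $\mathcal{S}_1,\dots,\mathcal{S}_r$ consists of $-\mu_i$ copies of $\dbcoh{W_i}$, and by Theorem~\ref{thm: fs decomposition} the $A$-side list $\mathcal{T}_1,\dots,\mathcal{T}_r$ consists of $-\mu_i$ copies of $\fs{\psi_i}$, where $\psi_i$ is a regeneration of the local singular pencil $Z_i$ attached to the circuit mirror to $\tau_i$. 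Matching the two lists wall-by-wall (the order reversal between the two statements, and the rotation of each annulus governed by the integer $d$ of Remark~\ref{rem: mirror to d}, do not affect ranks), it suffices to prove
\[
 \operatorname{rank} K_0(\fs{\psi_i}) = \operatorname{rank} K_0(W_i) \qquad \textnormal{for each } i .
\]

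For the left-hand side I would use that, after regeneration, $\psi_i$ is an exact symplectic Lefschetz fibration with isolated Morse critical points, so Seidel's construction exhibits a full exceptional collection of Lefschetz thimbles, one per critical point; hence $\operatorname{rank} K_0(\fs{\psi_i})$ equals the number of critical points of $\psi_i$. From the structure of the radar screen of $\psi$, the critical values lying in the $i$-th annular region split into $-\mu_i$ groups, one for each of the $-\mu_i$ copies of $\fs{\psi_i}$, and each group carries $a_i/(-\mu_i)$ of them, where $a_i$ is the edge multiplicity of the circuit. So the claim becomes
\[
 \frac{a_i}{-\mu_i} = \operatorname{rank} K_0(W_i) .
\]

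This identity is precisely what Remark~\ref{rem: matching up} records: from \cite{DKK12}, $a_i = -\mu_i\cdot\dim K_0(W_i)$, where combinatorially $a_i$ is the pairing $\langle e_0,\varphi_{T_{i+1}}-\varphi_{T_i}\rangle$ of the cocharacter defining $K$ against the affine edge of $\Sigma(A)$ corresponding to $Z_i$ (formula~\eqref{eq:mult form}). Conceptually, the reason is that the local pencil $Z_i$ is itself the Hori-Vafa-type LG mirror of the toric stack $W_i$, so that counting its vanishing cycles recovers $\chi(W_i) = \operatorname{rank} K_0(W_i)$, the number of maximal cones of the fan of $W_i$; the circuit being \emph{non-degenerate} is exactly the case $\dim K_0(W_i) = 1$, where each radial region of the annulus already contains a single critical point.

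Assembling, for each block $j$, say coming from wall $\tau_{i(j)}$, we obtain $\operatorname{rank} K_0(\mathcal{T}_j) = \operatorname{rank} K_0(\fs{\psi_{i(j)}}) = \operatorname{rank} K_0(W_{i(j)}) = \operatorname{rank} K_0(\mathcal{S}_j)$, which is the proposition. The main obstacle is the count of critical points of the regenerated local pencil in terms of the fan of $W_i$: one has to check that passing from the degenerate pencil $Z_i$ to its regeneration $\psi_i$ neither creates nor destroys critical points, that the (often non-trivial) stack structure of $\mathcal{X}_{\Sigma(A)}$ does not perturb the count, and that $a_i$ in \eqref{eq:mult form} is correctly read as $(-\mu_i)\cdot(\textnormal{number of maximal cones of }W_i)$ also for degenerate circuits --- that is, when $W_i$ is positive-dimensional and the covering $\psi_i \to Z_i$ genuinely contributes the extra vanishing cycles. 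All of this is carried out in \cite{DKK12}.
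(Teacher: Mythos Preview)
The paper does not actually prove this proposition; it is stated as a citation of \cite[5.17]{DKK12} with no argument given in the present text. Your reconstruction is correct and is exactly the intended line of reasoning: reduce to a single wall, identify $\operatorname{rank} K_0(\fs{\psi_i})$ with the number of critical points of the regenerated local pencil via Seidel's thimble collection, and then invoke the identity $a_i = -\mu_i\cdot\dim K_0(W_i)$ recorded in Remark~\ref{rem: matching up} (together with formula~\eqref{eq:mult form}) to match this count with $\operatorname{rank} K_0(W_i)$. Since $W_i$ is a projective toric DM stack, Theorem~\ref{thm: Kaw exc coll} guarantees $K_0(W_i)$ is free, so ``rank'' and ``dim'' agree and there is no torsion subtlety. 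The caveats you flag at the end --- that the regeneration preserves the critical point count, and that the degenerate-circuit case genuinely produces the extra vanishing cycles --- are precisely the content proven in \cite{DKK12}, so your deferral to that reference is appropriate.
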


\sidenote{{\color{red} Bullet point steps to proving HMS} We can throw this into the next version}

There are two hurdles that must be overcome in order to prove Conjecture~\ref{conj:main conjecture}. The first is
to show that 
\begin{equation}
 \mathcal{T}_i \simeq \mathcal{S}_i
\end{equation}
for any circuit in $\Sigma (A)$, or equivalently, for any wall crossing in the GKZ-fan coming from a straight line run.
The second hurdle is showing that the $A_\infty$- or dg-bimodule of morphisms from $\langle  \mathcal{T}_1,
\ldots, \mathcal{T}_k \rangle$ to $\mathcal{T}_{k + 1}$ is quasi-equivalent to that from $\langle  \mathcal{S}_1,
\ldots, \mathcal{S}_k \rangle$ to $\mathcal{S}_{k + 1}$. The full equivalence would then follow from an easy induction. For both the $A$ and $B$ side of mirror symmetry, a detailed study of the dg- or $A_\infty$- endomorphism algebra associated to exceptional collections in the respective category must be made.

\subsubsection{Calculating the $A_\infty$-algebra corresponding to a run of the toric Mori program on the $A$-side}\label{subsubsec: tropical}

For the $A$-side, we sketch a correspondence between LG models and a type of ``tropical Morse theory''. The main goal of which is to understand the structure of the Lagrangian vanishing cycles which appear in the regenerated LG models, as this is precisely what determines $\fs\psi $ by Theorem \ref{thm: fs decomposition}. The main point is that the location of these vanishing cycles can be largely described by determining their tropicalizations. The idea is simple: recall that for each component in the chain of (stacky) $\mathbb{P}^1$'s given a maximal LG degeneration $\psi$ its two torus fixed points correspond to triangulations, and the entire components corresponds to a circuit. We will show that the tropicalization of the vanishing cycles of a regeneration are determined by the circuit data. 

To this end, we first recall some basic notions from tropical geometry. First, recall that given a polarized toric variety $X_Q$ determined by a lattice polytope $Q$, there is a continuous map $$\op{Mo} : X_Q\rightarrow 
Q$$ called the moment map, such that the fiber over a point in the interior of a codimension $k$ cell of $Q$ is the real torus $(S^{1})^k$. In practice, it will suffice to consider the restriction of $\op{Mo}$ to the dense torus, in which case, identifying the interior of $\Delta$ homeomorphically with $\mathbb{R}^n$, the map is identified with the usual amoeba map $$\op{Log}(t_1,\ldots ,t_n) = (\log |t_1|,\ldots ,\log |t_n|). $$

If $H\subset X_Q$ is a hypersurface, the image $\op{Mo} (H)\subset Q$ is called the {\bf amoeba} of $H$. Tropical geometry studies the structure of $H$ via polyhedral complexes onto which the amoeba deformation retracts.

More precisely, given a convex triangulation $T$ of a polytope $(Q,A)$, we may define a tropical variety $Y_T$ which is the dual polyhedral complex to the triangulation, with maximal cells weighted by the lattice volume of the corresponding dual simplex. To each simplex in the triangulation, we associate a weighted tropical pair of pants which glue together to give the full tropical variety $Y_T$. 

Recall from \eqref{eq: vertices} that $T$ corresponds to a vertex of the secondary polytope $\Sigma (A)$ and thus a unique fixed point $p_T$ of the stack $\mathcal{X}_{\Sigma (A)}$. The relationship between the geometry of a hypersurface and $Y_T$ is most clear when the triangulation is assumed to be {\bf maximal}; that is, when the maximal simplices of $T$ each have normalized lattice volume equal to 1. As before, if $p\in\mathcal{X}_{\Sigma (A)}$, we let $\mathcal{H}_q$ denote the hypersurface in $X_Q$ which is the fiber of $\pi_{\mathcal{H}}$ over $p$. Let $\mathcal{H}^\circ_p$ denote the open part of this hypersurface which is contained in the dense torus of $X_Q$. 

The following proposition follows immediately from the main results of \cite{pants}.
\begin{theorem}
Let $T$ be a maximal triangulation of $(Q,A)$, and let $q(t)$ be a continuous path in $\mathcal{X}_{\Sigma (A)}$ such that $q(0)=p_T$ and $q(t)$ does not meet the toric boundary of $\mathcal{X}_{\Sigma (A)}$ for $0<t<\epsilon$. Then as $t\rightarrow 0$, the amoebas $\op{Mo} (\mathcal{H}^\circ_{q_t})$ converge, in the Gromov-Hausdorf limit, to the tropical variety $Y_T$.
\end{theorem}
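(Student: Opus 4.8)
The plan is to reduce the statement to the classical theorem on convergence of amoebas to tropical hypersurfaces in the large complex structure limit — this is the main input we extract from \cite{pants} — and then to identify the limit combinatorially with $Y_T$.

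First I would trivialize the family near $p_T$. Since $\mathcal{X}_{\Sigma(A)}$ is the toric stack of the secondary polytope, the torus fixed point $p_T$ corresponds to a maximal cone $\sigma_T$ of the normal fan of $\Sigma(A)$, which by the defining property of the secondary polytope is the secondary cone of the triangulation $T$. Working in a chart about $p_T$ dual to the rays of $\sigma_T$, a path $q(t)$ with $q(0)=p_T$ that avoids the toric boundary of $\mathcal{X}_{\Sigma(A)}$ is the same as a family of very full sections $s_t=\sum_{\alpha\in A}c_\alpha(t)\,z^\alpha$ whose coefficient valuations $v_\alpha=v_\alpha(t)$ drift, as $t\to 0$, into the interior of $\sigma_T$; equivalently, for $t$ small the regular subdivision of $(Q,A)$ induced by the heights $(v_\alpha)$ is exactly the maximal triangulation $T$. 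Over this chart $\mathcal{H}^\circ_{q_t}=V(s_t)\subset M\otimes\mathbb{G}_m$.

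Next I would apply amoeba convergence and read off the limit. After the standard rescaling of $\op{Log}$ by $\log(1/t)$, the rescaled amoebas of $V(s_t)$ converge in the Hausdorff metric to the corner locus of the tropical polynomial $\min_{\alpha\in A}(\langle\cdot,\alpha\rangle+v_\alpha)$; this is precisely the amoeba-degeneration statement supplied by \cite{pants}. Because the heights $(v_\alpha)$ induce the triangulation $T$, Newton-polytope duality identifies this tropical hypersurface with the codimension-one skeleton of the polyhedral complex dual to $T$, with each maximal cell carrying the normalized lattice volume of its dual simplex; as $T$ is maximal, all of these weights equal $1$. Gluing the local tropical pairs of pants associated to the simplices of $T$ then recovers the weighted complex $Y_T$ exactly as defined above. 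Transporting this convergence through the moment map identification of $\mathbb{R}^n$ with $\op{int}(Q)$ — which absorbs both the rescaling and the compactification to $Q$ — yields $\op{Mo}(\mathcal{H}^\circ_{q_t})\to Y_T$ in the Gromov--Hausdorff sense.

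The amoeba-convergence input is classical, so I expect the real work to be two bookkeeping points. First, one must verify that \emph{every} admissible path $q(t)$, not merely a coordinate one-parameter subgroup, produces coefficient asymptotics landing in the \emph{open} secondary cone $\sigma_T$; this forces one to unwind the generalized stacky fan description of $\mathcal{X}_{\Sigma(A)}$ and to account for its nontrivial stabilizers and gerbe structure (cf.\ Section 4.3 of \cite{DKK12}), so that ``does not meet the toric boundary'' genuinely yields the full triangulation $T$ rather than some coarsening of it. Second, one must upgrade Hausdorff convergence of the amoebas of the open parts $\mathcal{H}^\circ_{q_t}$ to Gromov--Hausdorff convergence of the relevant metric spaces inside $Q$; this needs the uniform tubular-neighbourhood and deformation-retraction estimates for amoebas near their tropical limit — again available in \cite{pants} — in order to control the ends of the amoeba approaching $\partial Q$ and to rule out any escape of mass.
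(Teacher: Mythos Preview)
Your proposal is correct and follows the same approach as the paper: the paper offers no argument beyond the sentence ``The following proposition follows immediately from the main results of \cite{pants},'' and your sketch simply unpacks what that immediate implication entails. Your identification of the torus fixed point $p_T$ with the secondary cone $\sigma_T$, the translation of the path $q(t)$ into coefficient asymptotics inducing the triangulation $T$, and the subsequent appeal to Mikhalkin's amoeba degeneration are exactly the steps one would fill in, and the two bookkeeping caveats you flag are reasonable (though the paper does not address them either).
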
 


Analyzing the convergence for sufficiently small $t$ produces a continuous map 
\begin{equation}
 \pi^{trop} : \mathcal{H}^\circ_{q_t} \to Y_T
\end{equation}
called a {\bf tropical degeneration} of $\mathcal{H}_{q_t}$. The fibers of this map over the interior of a maximal cell are simply real tori.

We now apply this machinery to the study the fibers of mirror pencils. Recall from Section~\ref{sec: A side} that a maximal degeneration of a mirror pencil arises as the limit of a family $\{\phi (t)\}$ of mirror pencils whose limit in the space $\langinz$ determines a sequence of edge paths $\{T_i, T_{i+1}\}$. Thus, for each $1 \leq i \leq m$ and $t \in [0,1]$ we may define $\{q_{i , t}\}$ on the boundary of the $i$-th annulus $B_i$ of in the $K$-orbit $\phi (t)$ which converges to $p_{T_i}$. 
In particular, the fibers near the maximal degeneration $\phi$ can be viewed as going through a sequence of tropical degenerations
\begin{equation}
 \pi^{trop}_i : \phi (t)^{*} (Z_{q_{i, t}} ) \to Y_{T_i} .
\end{equation}

We now discuss how to use tropical techniques to study not just the fibers over the points $q_{i,t}$, but the vanishing cycles which appear in the regenerated components $\psi_i$ from Theorem \ref{thm: fs decomposition}.  It was noted in \cite[Example 11.5.8]{GKZ} that there is a strong relationship between triangulations related by a circuit and Morse surgery. This motivates the following definition. 

\begin{figure}[ht]
 \includegraphics{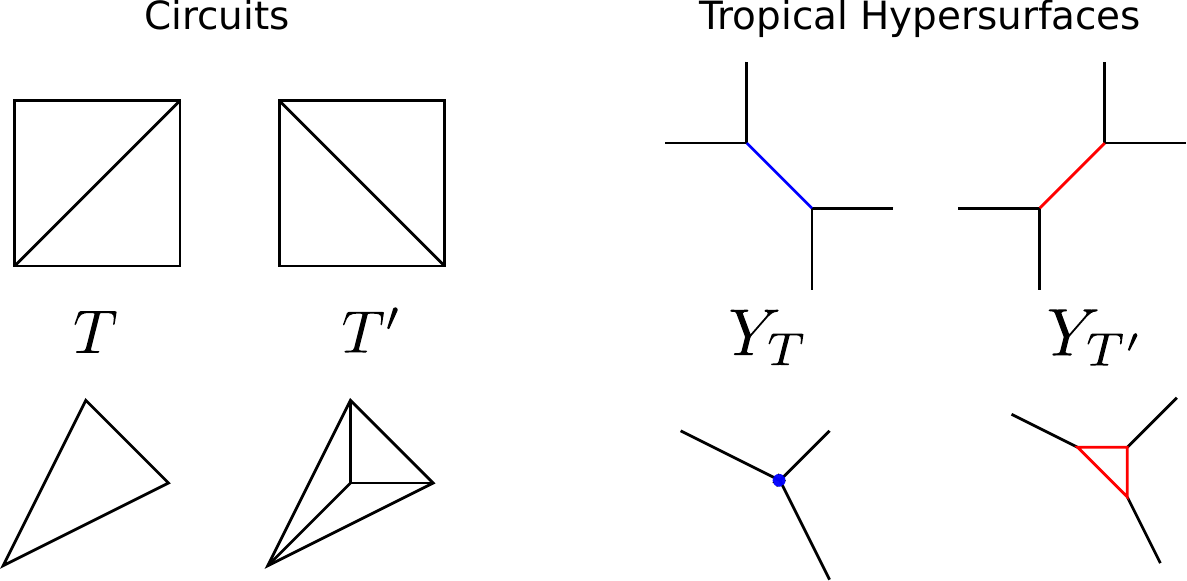}
 \caption{\label{fig:stable trop} Unstable and stable complexes for two dimensional circuits.}
\end{figure}
\begin{definition}
Let $T$ and $T^\prime$ be triangulations related by a circuit, and let $Y_{T}$ and $Y_{T^\prime}$ be the respective tropical varieties. A cell $P \subset Y_{T^\prime}$ is a {\bf stable cell} with respect to the circuit if its dual simplex is contained in $T^\prime$ but not $T$. The {\bf unstable cells} are those in $Y_{T}$ whose dual simplices are contained n $T$ but not $T'$. The collection of all stable cells will be called the {\bf stable complex} of the modification.
\end{definition}

Of course, the definition depends on an ordering of $\{T,T'\}$. In all situations we consider, this is afforded by the sequence $T_0,\ldots , T_ n$ defined by a maximal LG degeneration $\psi$. In Figure~\ref{fig:stable trop} we illustrate the stable cells in
red and the unstable cells in blue for non-degenerate circuits in two dimensions. The main reason for these considerations is to apply the technology of stable and unstable complexes to the study of vanishing cycles which appear in degenerations of LG models. We prove here that the stable complex is indeed the tropicalization of a vanishing cycle. 

\begin{proposition} \label{prop:tropical vanishing}
 Let $\{\psi (t)\}$ be a maximal degeneration of a mirror potential with $\{T_0, \ldots ,T_m \}$ the corresponding sequence of triangulations. If $\gamma$ is a path in $B_i$ from a critical value of $\psi (t)$ to $q_{i,t}$ with vanishing cycle $L$, then the image $\pi^{trop}_i : L \to Y_{T_i}$ converges to the stable complex of $\{T_i, T_{i + 1}\}$ as $t \to 0$.
\end{proposition}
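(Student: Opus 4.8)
The plan is to reduce the statement to a local computation near each simplex of the triangulation $T_i$, using the fact that, by the theorem of \cite{pants} cited above, the open hypersurface $\mathcal{H}^\circ_{q_{i,t}}$ comes equipped with a tropical degeneration $\pi^{trop}_i : \mathcal{H}^\circ_{q_{i,t}} \to Y_{T_i}$ whose fibers over the interior of a maximal cell are real tori $(S^1)^k$, and whose restriction over each tropical pair of pants is the standard pair-of-pants fibration. Since the circuit $\{T_i, T_{i+1}\}$ is supported on a unique circuit $C$ (as noted after Definition~\ref{def: circuit}), only the simplices of $T_i$ meeting $C$ are modified, so the stable complex of $\{T_i, T_{i+1}\}$ is a bounded polyhedral subcomplex of $Y_{T_i}$ localized near the star of $C$. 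I would therefore first show that the vanishing cycle $L$ is supported (up to the tropical degeneration, for $t$ small) over this same star, i.e. $\pi^{trop}_i(L)$ is contained in an arbitrarily small neighborhood of the stable complex; this follows from the matching of critical values into the subsets $S_{ij}$ described in Section~\ref{sec: A side} together with the fact that the circuit determines which critical points of $\psi_i(t)$ lie in the given radial region.

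The second and main step is the local model. For a non-degenerate circuit of signature $(p,q)$ in dimension $n$, the generic fiber of the pencil component $\psi_i$ is, near the relevant boundary strata, the Milnor fiber of the function $\sum_{\alpha\in A^+} z^\alpha + c\sum_{\alpha \in A^-} z^\alpha$ restricted to an appropriate torus, and the passage from $T_i$ to $T_{i+1}$ is exactly a Morse surgery on this fiber, as observed in \cite[Example~11.5.8]{GKZ}. In this local model the vanishing cycle is the Lagrangian sphere (or more precisely the Lefschetz thimble boundary) that shrinks as the coefficient $c$ moves; I would compute its image under $\op{Log}$ and check directly that it Gromov--Hausdorff converges to the dual polyhedral complex of the simplices appearing in $T_{i+1}$ but not $T_i$ — which is precisely the stable complex by definition. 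For degenerate circuits one has several critical points in the region, and one repeats the computation for each branch, obtaining a union of stable cells as in Figure~\ref{fig:stable trop}; the multiplicity bookkeeping is governed by \eqref{eq:mult form} and Remark~\ref{rem: matching up}.

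Finally I would globalize: the tropical degeneration $\pi^{trop}_i$ is compatible with the gluing of pairs of pants along the combinatorics of $T_i$, and the vanishing cycle $L$, being a single embedded Lagrangian, must glue from the local pieces in a way compatible with these identifications; since each local piece converges to its part of the stable complex and the stable complex itself is glued by the same combinatorial data, the limit of $\pi^{trop}_i(L)$ is the full stable complex.

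The step I expect to be the main obstacle is the local model for \emph{degenerate} circuits: when $|A^\circ| > 0$ or when the circuit is not primitive, the generic fiber of $\psi_i$ acquires several critical points and the relation between the shrinking Lagrangian cycles and the combinatorial stable cells is no longer a single classical Morse surgery. Controlling the asymptotics of all these cycles simultaneously — and verifying that none of them ``leaks'' outside a neighborhood of the stable complex as $t \to 0$ — requires a careful analysis of the higher-codimension strata of $Y_{T_i}$ and is where the tropical estimates must be made quantitative. Everything else is, in principle, a patching argument built on the results of \cite{pants} and \cite{DKK12} already quoted.
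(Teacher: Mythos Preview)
Your outline is correct and lands on the same localization idea as the paper: the circuit is supported on a single subpolytope $C$, the vanishing cycle lives over the part of the fiber coming from $C$, and the limit of its tropical image is identified with the stable complex there. The execution, however, differs. The paper does not compute any local Milnor-fiber model or take $\op{Log}$ of an explicit Lagrangian. Instead it works with the \emph{subdivision} $S$ attached to the edge $\{T_i,T_{i+1}\}$ of $\Sigma(A)$ (the coarsest common coarsening of the two triangulations), and decomposes the pulled-back universal hypersurface over the component $Z_{\psi,i}$ as $\bigcup_{\sigma\in S} X_\sigma$. For every $\sigma\neq C$ the piece $X_\sigma$ is a pair-of-pants bundle over $\mathbb{P}^1$ and carries no critical points; the entire critical locus is trapped in the interior of $X_C$ by \cite[Prop.~5.7]{DKK12}. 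This immediately gives containment of $\pi^{trop}_i(L)$ in the sub-tropical-variety $\tilde{Y}_{T_i}$ dual to $T_i|_C$, after which a compressing flow pushes the image into the \emph{compact} cells of $\tilde{Y}_{T_i}$, which are exactly the stable complex.

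Two consequences of this comparison are worth noting. First, your anticipated main obstacle --- degenerate circuits --- does not arise in the paper's argument: the decomposition by $S$ and the appeal to \cite{DKK12} are insensitive to whether the circuit is degenerate, so no case analysis or multiplicity bookkeeping is needed. Second, your ``globalization by gluing'' step is replaced in the paper by the single observation that the inclusion of stable cells $\tilde{Y}_{T_i}\hookrightarrow Y_{T_i}$ is a bijection onto the stable complex of $\{T_i,T_{i+1}\}$; no patching of local models is required. Your approach would also work, but it trades a short structural argument for explicit amoeba computations that the paper avoids.
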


\begin{proof}
Recalling Theorem~\ref{thm:moduli}, we have the map $\pi : \mathcal{X}_{\Theta (A)} \to \mathcal{X}_{\Sigma (A)}$ along
with its restriction $\pi_{\mathcal{H}}$ to the universal hypersurface. On the annulus $B_i$, the maps $\psi (t) :
\mathbb{P}^1 \to \mathcal{X}_{\Sigma (A)}$ converge to an $a_i$-fold cover of the $\mathbb{G}_m$ orbit corresponding to the edge $\{T_{i - 1}, T_i\}$ on the polytope $\Sigma (A)$. Let $S$ be the convex subdivision corresponding to the edge $\{T_{i - 1} , T_i\}$ and note that the polytopes in $S$ consist of the simplices that are
members of both triangulations and the polytope $C$ which contains all simplices of which are being modified.

The pullback via $\psi (0)$ of the hypersurface $\mathcal{H} \subset \mathcal{X}_{\Theta (A)}$ along the component
$Z_i$ decomposes into components
\begin{equation}
 \psi (0)|_{Z_{\psi, i}}^* ( \mathcal{H} ) = \cup_{\sigma \in S} X_{\sigma}
\end{equation}
where $X_\sigma$ is a fiber bundle of a of pants (or a cover thereof) over $\mathbb{P}^1$ for all $\sigma \ne C$ and the
map $f_C :X_C \to \mathbb{P}^1$ is associated to the subpolytope $C \subset Q$ associated to the circuit. It is a consequence of Proposition 5.7 of \cite{DKK12} that the critical points of $\psi (t)|_{B_i}$ converges to the
unique critical point of $f_C$ and is contained in the interior of $X_C$. 

Let $\tilde{Y}_{T_i}$ be the tropical variety associated to the $T_i$ triangulation of the marked polytope $(C , C \cap
A)$. Then, there is an inclusion of the polyhedra in $\tilde{Y}_{T_i}$ to those in $Y_{T_i}$. We observe that at $t =
0$, the tropical image of the vanishing cycle $L$ is contained dual complex $\tilde{Y}_{T_i} \subset Y_{T_i}$, as it is
the same image as that of the hypersurfaces corresponding to $f_C$. One can use a compressing flow to force this image
to be in the compact polyhedra of $\tilde{Y}_{T_i}$. It is an easy exercise to see that for a circuit, these polyhedra give the stable complex of the circuit corresponding to $\{T_i |_C, T_{i + 1}|_C\}$. But it easily follows
from the definition of stable complex that the inclusion of the stable complex in $\tilde{Y}_{T_i}$ of $\{T_i |_C, T_{i
+ 1}|_C\}$ into the full tropical complex $Y_{T_i}$ will be a bijection of polyhedra. This yields the claim.
\end{proof}

\begin{remark} Conjectures of Seidel \cite{SeiSpec} suggest that it may be possible to build a calculus that yields the Floer theory of the vanishing cylces in terms of essentially tropical data.  The above proposition indicates that one can identify the vanishing cycles of an LG mirror with cell complexes in the tropical hypersurface $Y_{T_m}$ near infinity; however, the results in \cite{Abouzaid} strongly suggest that, in general, in order to fully recover the Floer theory of a Lagrangian from its tropicalization, one requires additional combinatorial enhancements. In addition to standard tropical techniques, establishing related results would require the study of Floer theory in the coamoeba setting \cite{FHKV09, Sheridan}. Were such a calculus to be built, the $A_\infty$-endomorphism algebra of the exceptional collection of vanishing cycles would then be read off from the combinatorial data of the tropical vanishing cycles, after which a comparison with the $B$-model could be made. The 
implementation of this program is beyond the scope of the work, but motivates much of our analysis. In particular, we view the analysis of the $A$-side of the main example in Section \ref{subsec: example} as a particular instance of this philosophy. \end{remark}

In the case where the fibers are Riemann surfaces, using matching paths arguments, the actual vanishing cycles can be explicitly identified for the non-degenerate circuits (of which there are only two cases). These cycles are pictured in Figures~\ref{fig:stable trop} and \ref{fig:circuit vanishing cycles} as an illustration of Proposition~\ref{prop:tropical vanishing}.

\begin{figure}
 \includegraphics{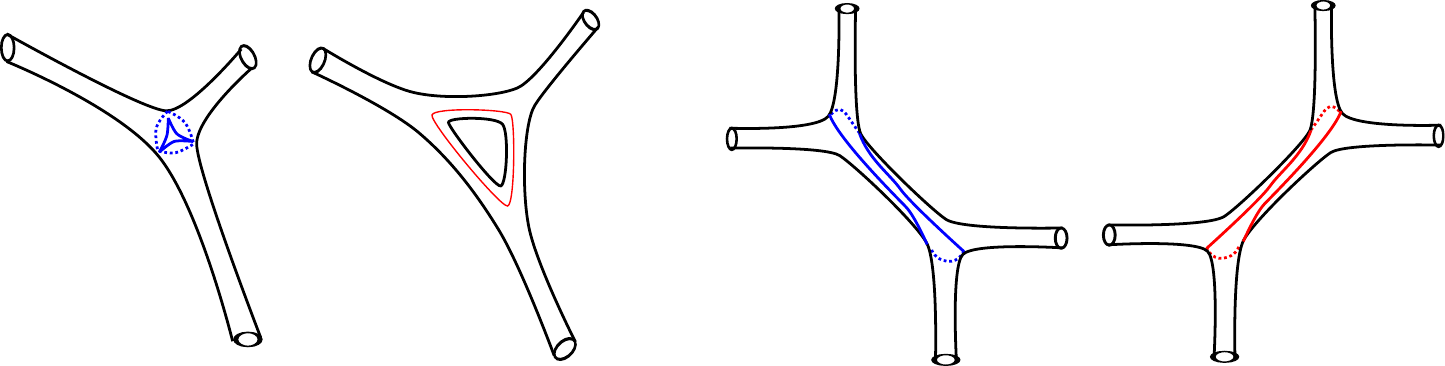}
 \caption{\label{fig:circuit vanishing cycles} Circuit vanishing cycles}
\end{figure}

In the three cases on the right of Figure~\ref{fig:circuit vanishing cycles}, the vanishing cycle is embedded in the hypersurface while in the case on the left,
one sees an immersed vanishing cycle which is a $(\mathbb{Z} / 3 \mathbb{Z})$ quotient of the hexagonal torus. In this last case, the actual Lagrangian lies in the cover. The monodromy around
the fixed points of these circuit fibrations is given by a Dehn twist along each cylinder that
corresponds to an edge in the stable or unstable manifold. In the case of the finite quotient, this is simply the action
of $(\mathbb{Z} / 3 \mathbb{Z} )$ on the cover.  In the next section, we pull these cycles back along the tropical
hypersurfaces associated to a maximal LG mirror degeneration.


\begin{remark}
 Compared to computing Floer cohomology, there are a multitude of general ways for computing morphisms in the derived category of coherent sheaves on a variety. However, the $A$-model analysis described above and the methods and observations of \cite{AbouSei,SeiSpec} suggest an approach to the $B$-side computations of $A_\infty$ structure that might be amenable to mirror symmetry.  Namely, the hypersurface on the $A$-side admits a pair pants decomposition coming from its tropical amoeba. This decomposition is combinatorially dual to the collection of maximal cones in the fan for the corresponding toric variety. If we wish to compute morphisms on the $A$-side, following \cite{SeiSpec}, we should be able to compute the Floer cohomology of the restrictions of the Lagrangians to each pair of pants and glue them together to get the global Floer cohomology. Mirror to this procedure, we should use the affine cover of the toric variety coming from the maximal cones, plus locally-free resolutions of the complexes 
mirror to the Lagrangians.  Hence, we obtain a dg-algebra from the corresponding \u{C}ech resolution.
 However, due to the fact that much of this technology is not currently available, this is not the approach we will take in Section \ref{subsec: example}.
\label{rem: Cech}
\end{remark}

%
%
%
%

\subsection{An example} \label{subsec: example}

To illustrate the program outlined above, we examine the case of a blow up of $\mathbb{P}^2$ in three
infinitesimally close points. Precisely,  take $\P^2$, blow it up at a point, and then blow up at the two points on the exceptional locus corresponding to two coordinate axes passing through the original point.  This is toric.  Namely, we take $X$ to be the toric variety defined by the fan $\Sigma$ in $N_\mathbb{R}$ with one cone generators 
\begin{equation} \Sigma (1) = \{(-1,-1) (1,0), (0,1), (1,1), (1,2), (2,1)\}. \end{equation}
The fan $\Sigma$ is pictured in Figure~\ref{fig: fan} with rays labeled according to the ordering in the above equation.  We name the corresponding toric divisors $\{D_0,\ldots D_5\}$.  

\begin{figure}[h] 
  \begin{tikzpicture}
  [scale=.57, vertex/.style={circle,draw=black!100,fill=black!100,thick, inner sep=0.5pt,minimum size=0.5mm}, cone/.style={->,very thick,>=stealth}]
  \filldraw[fill=black!20!white,draw=white!100]
    (-4.2,4.2) -- (4.2,4.2) -- (4.2,-4.2) -- (-4.2,-4.2) -- (-4.2,4.2);
  \draw[cone] (0,0) -- (5.4,0);
  \draw[cone] (0,0) -- (0,5.4);
  \draw[cone] (0,0) -- (5.1,2.55);
  \draw[cone] (0,0) -- (4.5,4.5);
  \draw[cone] (0,0) -- (-4.5,-4.5);
  \draw[cone] (0,0) -- (2.55,5.1);
  \node at (0,5.8) {$2$};
  \node at (2.7,5.4) {$4$};
  \node at (4.8,4.8) {$3$};
  \node at (5.8,0) {$1$};
  \node at (-4.8,-4.8) {$0$};
  \node at (5.4,2.7) {$5$};
  \foreach \x in {-4,-3,...,4}
  \foreach \y in {-4,-3,...,4}
  {
    \node[vertex] at (\x,\y) {};
  }
  \end{tikzpicture}
 \caption{\label{fig: fan} The fan of the toric variety $X$}
\end{figure}
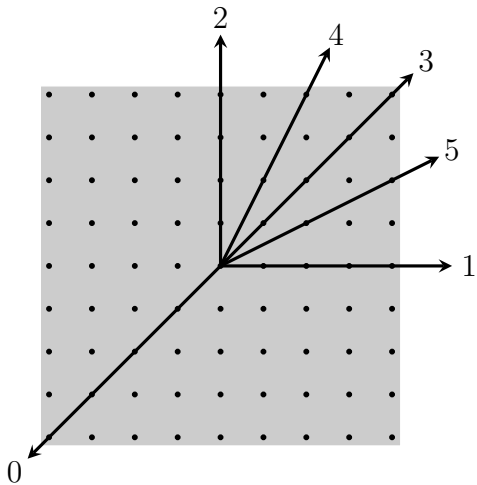

This fan determines the linear map $\gamma$ from $\Z^6$ to $\widehat{G} \cong \Z^4$ displayed in Figure~\ref{fig: linear map} with basis for $\Z^4$ given by $D_0, D_3,D_4,D_5$.

\begin{figure}[h]
\begin{tikzpicture}
 \node (a) at (-3.5,0) {$\Z^6$};
 \node (b) at (3.5,0) {$\Z^4 \cong \widehat{G}$};
 \draw[->] (a) -- node[above] {$\gamma = \begin{pmatrix} 
				   1 & 1 & 1 & 0 & 0 & 0 \\ 
				   0 & -1 & -1 & 1 & 0 & 0 \\
				   0 & -2 & -1 & 0 & 1 & 0 \\
				   0 & -1 & -2 & 0 & 0 & 1 \\
				  \end{pmatrix}$} (b);
\end{tikzpicture}
 \caption{\label{fig: linear map} The map $\gamma$ for the toric variety $X$}
\end{figure}

The variety $X$ is not nef-Fano. However, if
$\Sigma^\prime (1) = \Sigma (1) \backslash \{(1,1)\}$ and  $X^\prime$ is the smooth stack associated to $\Sigma^\prime$,
one observes that $X^\prime$ is nef-Fano and $X^\prime$ is a phase of $X$.  Figure~\ref{fig: minimal model run}  illustrates a straight line
run of the toric Mori program for $X'$ with $\Sigma^\prime$ appearing on the far left and $\Sigma$ appearing as the first phase.  The one-parameter subgroups determining the wall-crossings are listed above each fan modification in the basis for $D_0, D_3, D_4, D_5$.

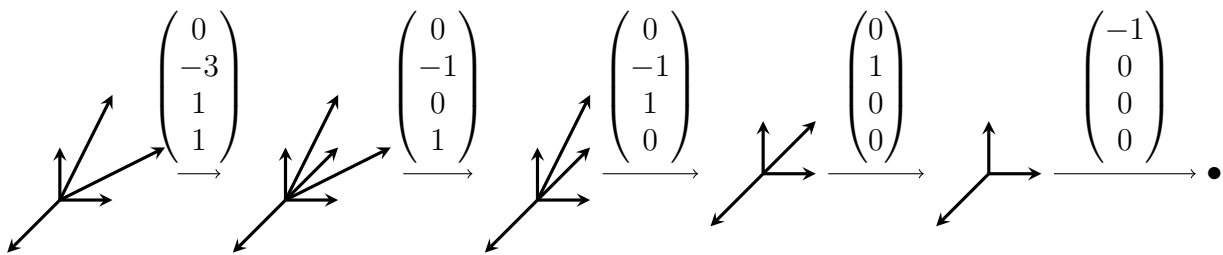
\begin{figure}[h]
\begin{tikzpicture}
 
 \node (a) at (-7.5,0) {
 
  \begin{tikzpicture}
  [scale=.7, vertex/.style={circle,draw=black!100,fill=black!100,thick, inner sep=0.5pt,minimum size=0.5mm}, cone/.style={->,very thick,>=stealth}]
  \draw[cone] (0,0) -- (1,0);
  \draw[cone] (0,0) -- (0,1);
  \draw[cone] (0,0) -- (2,1);
  \draw[cone] (0,0) -- (-1,-1);
  \draw[cone] (0,0) -- (1,2);
  \end{tikzpicture}
 
 };
  
 \node (b) at (-4.5,0) {
  \begin{tikzpicture}
  [scale=.7, vertex/.style={circle,draw=black!100,fill=black!100,thick, inner sep=0.5pt,minimum size=0.5mm}, cone/.style={->,very thick,>=stealth}]
  \draw[cone] (0,0) -- (1,0);
  \draw[cone] (0,0) -- (0,1);
  \draw[cone] (0,0) -- (2,1);
  \draw[cone] (0,0) -- (1,1);
  \draw[cone] (0,0) -- (-1,-1);
  \draw[cone] (0,0) -- (1,2);
  \end{tikzpicture}
 };
 
  \node (c) at (-1.5,0) {
  \begin{tikzpicture}
  [scale=.7, vertex/.style={circle,draw=black!100,fill=black!100,thick, inner sep=0.5pt,minimum size=0.5mm}, cone/.style={->,very thick,>=stealth}]
  \draw[cone] (0,0) -- (1,0);
  \draw[cone] (0,0) -- (0,1);
  \draw[cone] (0,0) -- (1,1);
  \draw[cone] (0,0) -- (-1,-1);
  \draw[cone] (0,0) -- (1,2);
  \end{tikzpicture}
 };
 
  \node (d) at (1.5,0) {
  \begin{tikzpicture}
  [scale=.7, vertex/.style={circle,draw=black!100,fill=black!100,thick, inner sep=0.5pt,minimum size=0.5mm}, cone/.style={->,very thick,>=stealth}]
  \draw[cone] (0,0) -- (1,0);
  \draw[cone] (0,0) -- (0,1);
  \draw[cone] (0,0) -- (1,1);
  \draw[cone] (0,0) -- (-1,-1);
  \end{tikzpicture}
 };
 
  \node (e) at (4.5,0) {
  \begin{tikzpicture}
  [scale=.7, vertex/.style={circle,draw=black!100,fill=black!100,thick, inner sep=0.5pt,minimum size=0.5mm}, cone/.style={->,very thick,>=stealth}]
  \draw[cone] (0,0) -- (1,0);
  \draw[cone] (0,0) -- (0,1);
  \draw[cone] (0,0) -- (-1,-1);
  \end{tikzpicture}
 };
 
  \node (f) at (7.5,0) {
  $\bullet$
 };
 
 \draw[->] (a) -- node[above] {$\begin{pmatrix} 0 \\ -3 \\ 1 \\ 1 \end{pmatrix}$} (b);
 \draw[->] (b) -- node[above] {$\begin{pmatrix} 0 \\ -1 \\ 0 \\ 1 \end{pmatrix}$} (c);
 \draw[->] (c) -- node[above] {$\begin{pmatrix} 0 \\ -1 \\ 1 \\ 0 \end{pmatrix}$} (d);
 \draw[->] (d) -- node[above] {$\begin{pmatrix} 0 \\ 1 \\ 0 \\ 0 \end{pmatrix}$} (e);
 \draw[->] (e) -- node[above] {$\begin{pmatrix} -1 \\ 0 \\ 0 \\ 0 \end{pmatrix}$} (f);
 
\end{tikzpicture}
 \caption{\label{fig: minimal model run} A straight line
run of the toric Mori program for $X^\prime$ marked by one-parameter subgroups}
\end{figure}


Following the outline given in Section \ref{sec: A side}, the straight line
run of the toric Mori program appearing in Figure~\ref{fig: minimal model run} can also be interpreted in the language of monotone path polytopes.  To this end, consider the secondary polytope of $A = \Sigma (1) \cup
\{(0,0)\}$. Recall that the normal fan of the polytope extends the GIT fan of $X$. A direct calculation finds that the secondary polytope $\Sigma (A)$ is a $4$-dimensional polytope with $30$ vertices, $20$ of which have normal cones in the GIT fan of $X$. The monotone path polytope of $\Sigma (A)$ with respect to the function $\chi$ can also be computed as a $3$-dimensional polytope with $24$ vertices. In this case, every
run of the toric Mori program for $X^\prime$ is a straight line run and so the vertices of the monotone path polytope are in one to one correspondence with straight line
runs of the toric Mori program of $X^\prime$. Of these runs, precisely $10$ have $X$ as a phase. The  monotone path polytope is illustrated in Figure~\ref{fig: monotone path polytope}. Those faces which have vertices whose
runs of the toric Mori program contain $X$ as a phase are colored in green. The left most green vertex corresponds to the straight line
run of the toric Mori program illustrated in Figure~\ref{fig: minimal model run}. 

\begin{figure}[h]

\includegraphics{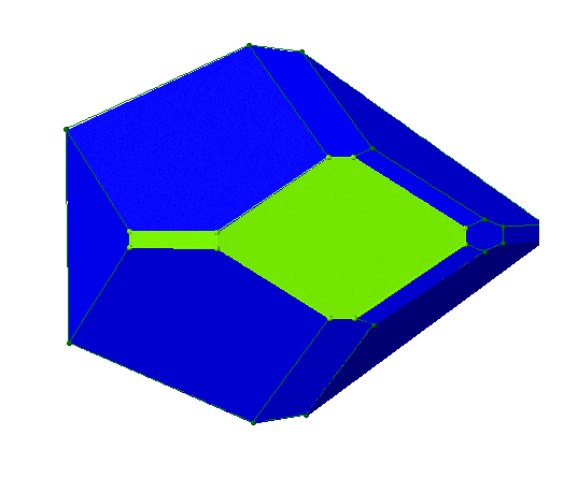}
\caption{\label{fig: monotone path polytope} The monotone path polytope $\Sigma_\chi (\Sigma (A))$.}
\end{figure}

\subsubsection{The $B$-model computation}
Let us now discuss the dg-algebra associated to this straight line
run of the toric Mori program on the $B$-side.  

Recall that $X$ is, by definition, the toric variety defined by the fan $\Sigma$ in
$N_\mathbb{R}$ with one cone generators labeled from $0$ to $5$ by 
\begin{equation} \Sigma (1) = \{(1,0), (2,1), (1,1), (1,2), (0,1), (-1,-1)\}. \end{equation}

The Cox construction describes $X$ as a GIT quotient of the prime spectrum of the homogeneous coordinate ring, $\C[x_0, \ldots,
x_5]$, by the torus dual to the Picard group of $X$, 
\[
G := \op{Hom}(\op{Pic}(X), \mathbb{G}_m),
\]
for any choice of a $G$-equivariant ample line bundle.   In this case,
the semi-stable locus and stable locus agree.  They are the complement of the vanishing locus of the ideal
\[
J := \langle x_3x_4x_2x_0, x_4x_2x_0x_1, x_2x_0x_1x_5, x_0x_1x_5x_3, x_1x_5x_3x_4, x_5x_3x_4x_2 \rangle.
\]  
Let
\[
U := \op{Spec }\C[x_0, \ldots, x_5] \backslash V(J)
\]
so that
\begin{equation} \label{eq: X as GIT}
X \cong [U / G].
\end{equation}

Via the above isomorphism, one can identify $D_i$ with the vanishing locus of $x_i$  for $0 \leq i \leq5$.
With this interpretation, the two disjoint exceptional loci from the second blow-up are $D_4$ and $D_5$
 while the strict transform of the exceptional locus from the first blow-up is described by $D_3$.  The remaining toric divisors, $D_0, D_1$ and $D_2$ are all linearly equivalent to the pullback of a generic hyperplane from $\P^2$.  

Now, the run of the Mori program appearing in Figure~\ref{fig: minimal model run} (starting from the ample chamber of $X$ and ending outside the support of the GKZ fan) corresponds to the sequence of blow-downs for $X$ given by $D_2$ followed by $D_4$ followed by $D_3$.
Consider the blow down maps
\[
f_5: X \to Y
\]
contracting $D_5$,
\[
f_4: Y \to \mathbb{F}_1
\]
contracting $D_4$, and
\[
g: \mathbb{F}_1 \to \P^2
\]
contracting $D_3$.

Table~\ref{tab: weights} displays the weights of each of the one-parameter subgroups appearing in Figure~\ref{fig: minimal model run} along with the data $(W,\mu)$ used to determine the semi-orthogonal decomposition in Theorem~\ref{thm: SOD on B-side}.  The one-parameter subgroup $\lambda_1$ determines the variation of GIT quotients from $X'$ to $X$.  Therefore, to compute the semi-orthogonal decomposition of $\dbcoh{X}$ we consider the sequence of wall-crossings corresponding to $\lambda_2, \lambda_3, \lambda_4, \lambda_5$.

\begin{table}[h]
\begin{center}
 \begin{tikzpicture}
  \matrix (m) [matrix of math nodes, row sep=1em, column sep=1em]
    {  & x_0 & x_1  & x_2  & x_3 & x_4 & x_5 & W_i & \mu_i \\
      \lambda_1 & 0 & 0 & 0 & -3 & 1 & 1 & pt & -1 \\ 
      \lambda_2 & 0 & -1 & 0 & -1 & 0 & 1 & pt & -1 \\ 
      \lambda_3 & 0 & 0 & -1 & -1 & 1 & 0 & pt & -1 \\
      \lambda_4 & 0 & -1 & -1 & 1 & 0 & 0 & pt & -1 \\ 
      \lambda_5 & -1 & -1 & -1 & 0 & 0 & 0 & pt & -3 \\ 
    };
   \draw (5,2.1) -- (-4.1,2.1);
   \draw (-4.1,-3) -- (-4.1,2.1);
 \end{tikzpicture}
\end{center}
 \caption{\label{tab: weights} The weights and wall data associated to each one-parameter subgroup}
\end{table}

Let us first justify the computation of the wall data $(W_2, \mu_2)$ in the case of the wall $\tau_2$ determined by $\lambda_2$.  The anticanonical divisor on $X$ is given by the sum of the $D_i$. In the basis $D_0, D_3, D_4, D_5$ we can sum the columns of $\gamma$ in Figure~\ref{fig: linear map} to obtain
\[
-K_X = -3D_0 +D_3+2D_4+2D_5.
\]
Therefore we can take the dot product of $\lambda_2 = (0,-1,0,1)$ with the anticanonical $(3,-1,-2,-2)$ to obtain $\mu_2 = -1$.  In the same manner, one can calculate each of the $\mu_i$ 

Now recall that $W_2$ is defined as the GIT quotient of the fixed locus of $\lambda_2$ by $(\mathbb{G}_m)^3$ determined by a character $\alpha$ in the relative interior of $\tau_2$.  From Table~\ref{tab: weights}, we see that the fixed locus of $\lambda_2$ is defined by the $x_i$ of weight zero, namely $x_0,x_2,x_4$.  This yields a GIT quotient of $\C^3$ by $(\mathbb{G}_m)^3$. Under change of basis, the action can be diagonalized completely so that weights of the action are $(1,0,0),(0,1,0),$ and $(0,0,1)$. Any GIT quotient with a generic character yields a point. 
Also in the same manner, one can see that each $W_i$ is a point.

Let us consider the application of Theorem~\ref{thm: SOD on B-side} to the wall-crossing associated to $\lambda_2$ with wall data $(pt, -1)$.  For each $d \in \Z$, we get
 fully-faithful functors
\[
\Phi_d: \dbcoh{Y} \to \dbcoh{X}
\]
and
\[
\Upsilon_+ : \dbcoh{pt} \to \dbcoh{X}
\]
and a semi-orthogonal decomposition
\[
\dbcoh{X} = \langle \dbcoh{pt}, \dbcoh{Y} \rangle.
\]

Fix $d =-1$.  The functor $\Upsilon_+$ can be described as follows. The fixed locus of $\lambda_2$ is 
\begin{displaymath}
 \mathbb{A}^3 = V(x_0,x_1,x_2) \subset \mathbb{A}^6. 
\end{displaymath}
The set of points,
\begin{displaymath}
 \{x \in \mathbb{A}^6 \mid \lim_{t \to 0} \lambda_2(t) \cdot x \text{ exists} \} \cong \mathbb{A}^5,
\end{displaymath}
is the vanishing locus of $x_5$. (Here, it useful to remember that the passing from the rings to spectra inverts the weights of an action.)

There is a sequence of morphisms of stacks,
\begin{displaymath}
 \mathbb{A}^3 \modmod{} \ \mathbb{G}_m^3 \to \mathbb{A}^5 \modmod{} \ \mathbb{G}_m^4 \to \mathbb{A}^6 \modmod{} \ \mathbb{G}_m^4 = X. 
\end{displaymath}
The first map is pulling back by the map, $$x \mapsto \lim \lambda_2(t) \cdot x,$$ and using the projection, $$\mathbb{G}_m^4 \cong G \to G/\lambda_2(\mathbb{G}_m) \cong \mathbb{G}_m^3.$$ The second is the inclusion. 

Applying Theorem \ref{theorem: SOD of derived categories for toric variations}, we get a semi-orthogonal decomposition
\[
\dbcoh{X} = \langle \O_{D_5}(-K_X), f_5^*\dbcoh{Y} \rangle.
\]

Proceeding similarly for the walls $\tau_3,\tau_4,\tau_5$ corresponding to the one-parameter subgroups $\lambda_3,\lambda_4,\lambda_5$, the exceptional collection of $\dbcoh{X}$ corresponding to the run of the Mori program in Figure~\ref{fig: minimal model run} is given by
\[
\langle \O_{D_4}(-K_X), \O_{D_5}(-K_X),  (f_4 \circ f_5)^*(\O_{D_3}(-3D_0+D_3)), (g \circ f_4 \circ f_5)^*\dbcoh{\P^2} \rangle.
\]

\begin{remark}
In this way, one can also obtain a semi-orthogonal decomposition of $\dbcoh{X'}$,
\[
\langle \O_p(-K_{X'}), \Phi_d(\dbcoh{X}) \rangle
\]
where $p$ is a stacky point defined by the intersection of $D_4$ and $D_5$ in $X'$.
\end{remark}

It is natural for computational reasons to move the wall contributions from the left to the right. This amounts to applications of Serre functors for some of the admissible subcategories. This leaves us with
\[
\dbcoh{X} = \langle (g \circ f_4 \circ f_5)^*\dbcoh{\P^2}, (f_4 \circ f_5)^*\O_{D_3}, \O_{D_4}, \O_{D_5} \rangle.
\]

\begin{remark}
 The reader may be concerned that we mutated our exceptional before comparing it with the $A$-side. The difference between this choice and the original collection on the $A$-side amounts to a choice of orientation for the vanishing paths. It is therefore inconsequential. 
\end{remark}

Now, notice that 
\[
(f_4 \circ f_5)^*\O_{D_3} = \O_{D_3+D_4+D_5}.
\]
Furthermore, the unique run of the Mori program for the toric GIT description of $\P^2$ yields a semi-orthogonal decomposition
\[
\dbcoh{\P^2} = \langle \O(-2), \O(-1), \O \rangle.
\]
or
\[
(g \circ f_4 \circ f_5)^*\dbcoh{\P^2} = \langle \O(-2D_0), \O(-D_0), \O \rangle.
\]

In summary we have an exceptional collection $\langle E_0, \ldots, E_5 \rangle$ on $\dbcoh{X}$ where 
\begin{align}
 E_0 &= \mathcal{O} (-2 D_0) , \nonumber \\
 E_1 &= \mathcal{O} (- D_0) , \nonumber \\
 E_2 &= \mathcal{O} ,\nonumber \\ 
 E_3 &= \mathcal{O}_{ D_3 + D_4 + D_5},\nonumber \\
 E_4 &= \mathcal{O}_{D_4},\nonumber \\
 E_5 &= \mathcal{O}_{D_5}. \nonumber
\end{align}

Our aim is to compute the $A_\infty$-algebra 
\begin{equation}
 A = \textnormal{End}^\bullet \left( \oplus_{i = 0}^5 E_i \right)
\end{equation}
As a first step, we compute the underlying cohomology algebra which gives the quiver with relations in Figure~\ref{fig: cohomology quiver}.
\begin{figure}[ht]
\begin{center}
\begin{tikzpicture}[scale=1,level/.style={->,>=stealth,thick}]
	\node at (-4,0) {$\bullet$};
	\node at (-2,0) {$\bullet$};
	\node at (0,0) {$\bullet$};
	\node at (2,0) {$\bullet$};
	\node at (4,1) {$\bullet$};
	\node at (4,-1) {$\bullet$};
	
	\draw[level] (-3.85,.15) .. controls (-3,1) and (-3,1) .. node[above] {$a_0$} (-2.15,.15);
	\draw[level] (-3.8,0) -- node[above] {$a_1$} (-2.2,0);
	\draw[level] (-3.85,-.15) .. controls (-3,-1) and (-3,-1) .. node[above] {$a_2$} (-2.15,-.15);

	\draw[level] (-1.85,.15) .. controls (-1,1) and (-1,1) .. node[above] {$b_0$} (-0.15,.15);
	\draw[level] (-1.8,0) -- node[above] {$b_1$} (-0.2,0);
	\draw[level] (-1.85,-.15) .. controls (-1,-1) and (-1,-1) .. node[above] {$b_2$} (-0.15,-.15);
	
	\draw[level] (.2,0) -- node[above] {$t$} (1.8,0);
	
	\draw[level] (2.2,.1) -- node[above] {$c_4$} (3.8,.9);
	\draw[level] (2.1,.2) .. controls (3,1.5) and (3,1.5) .. node[above] {$d_4$} (3.8,1.1);

	\draw[level] (2.2,-.1) -- node[above] {$c_5$} (3.8,-.9);
	\draw[level] (2.1,-.2) .. controls (3,-1.5) and (3,-1.5) .. node[above] {$d_5$} (3.8,-1.1);
\end{tikzpicture}
\end{center}
\caption{Scissors quiver}
\label{fig: cohomology quiver}
\end{figure}
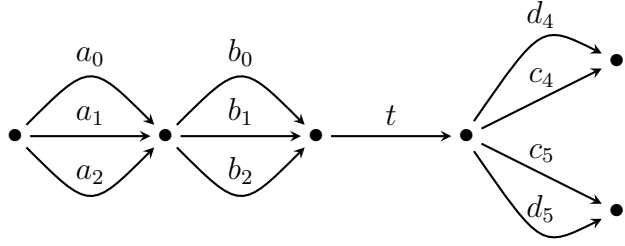


In Figure~\ref{fig: cohomology quiver} the degree of each arrow is zero except 
\begin{equation}
 \deg (d_4) = 1 = \deg (d_5).
\end{equation}

The relations are 
\begin{align}
 b_2 a_1 = b_1 a_2, \nonumber \\
 b_2 a_0 = b_0 a_2, \nonumber \\
 b_1 a_0 = b_0 a_1, \label{eq:relations1} \\
 t b_1 = 0 = t b_2 ,  \nonumber \\
 d_4 t = 0 = d_5 t . \nonumber 
\end{align}
As a formal consequence of the homological grading on the $A_\infty$ higher products, it is possible to eliminate all such higher products except $m^A_3$. 
%
To compute $m_3$, we must resolve the sheaves by either injective sheaves, or by the total complex of a vector bundle and \u{C}ech resolution (see also Remark~\ref{rem: Cech}). However, in this case we may perform one left mutation to obtain another exceptional collection 
\begin{align}
 \langle \tilde{E}_0 , \tilde{E}_1 , \tilde{E}_2 , \tilde{E}_3, \tilde{E}_4, \tilde{E}_5 \rangle &= \langle E_0, E_1, L_{E_2} E_3, E_2, E_4, E_5 \rangle
\end{align}
where the twist can be written concretely as $L_{E_2} E_3 = \mathcal{O} (-D_1 - D_2 - D_3 )$. One can check that this exceptional collection is strong so its endomorphim algebra is formal.

To get a controlled dg-resolution of $A$, we may replace $\mathcal O_{D_3+D_4+D_5}$ by the complex,
\begin{displaymath}
 \mathcal O(-D_3-D_4-D_5) \overset{x_3x_4x_5}{\longrightarrow} \mathcal O,
\end{displaymath}
and consider the resulting dg-quiver in Figure \ref{fig: DG-quiver}.

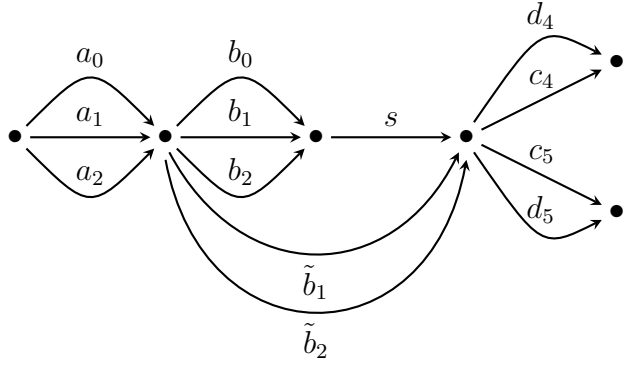
\begin{figure}[ht]
\begin{center}
\begin{tikzpicture}[scale=1,level/.style={->,>=stealth,thick}]
	\node at (-4,0) {$\bullet$};
	\node at (-2,0) {$\bullet$};
	\node at (0,0) {$\bullet$};
	\node at (2,0) {$\bullet$};
	\node at (4,1) {$\bullet$};
	\node at (4,-1) {$\bullet$};
	
	\draw[level] (-3.85,.15) .. controls (-3,1) and (-3,1) .. node[above] {$a_0$} (-2.15,.15);
	\draw[level] (-3.8,0) -- node[above] {$a_1$} (-2.2,0);
	\draw[level] (-3.85,-.15) .. controls (-3,-1) and (-3,-1) .. node[above] {$a_2$} (-2.15,-.15);

	\draw[level] (-1.85,.15) .. controls (-1,1) and (-1,1) .. node[above] {$b_0$} (-0.15,.15);
	\draw[level] (-1.8,0) -- node[above] {$b_1$} (-0.2,0);
	\draw[level] (-1.85,-.15) .. controls (-1,-1) and (-1,-1) .. node[above] {$b_2$} (-0.15,-.15);
	
	\draw[level] (-1.95,-.2) .. controls (-1,-2) and (1,-2) .. node[below] {$\tilde{b}_1$} (1.9,-.2);
	\draw[level] (-2,-.3) .. controls (-1.5,-3) and (1.5,-3) .. node[below] {$\tilde{b}_2$} (2,-.3);
	
	\draw[level] (.2,0) -- node[above] {$s$} (1.8,0);
	
	\draw[level] (2.2,.1) -- node[above] {$c_4$} (3.8,.9);
	\draw[level] (2.1,.2) .. controls (3,1.5) and (3,1.5) .. node[above] {$d_4$} (3.8,1.1);

	\draw[level] (2.2,-.1) -- node[above] {$c_5$} (3.8,-.9);
	\draw[level] (2.1,-.2) .. controls (3,-1.5) and (3,-1.5) .. node[above] {$d_5$} (3.8,-1.1);
\end{tikzpicture}
\end{center}
\caption{Resolved scissors quiver}
\label{fig: DG-quiver}
\end{figure}

All arrows are degree zero except
\begin{gather}
 \op{deg}( d_4 ) = \op{deg} (d_5) = 1 \\
 \op{deg}(\tilde{b_1}) = \op{deg} ( \tilde{b_2} ) = -1.
\end{gather}

The relations on arrows in the quiver illustrated in Figure \ref{fig: DG-quiver} are 

\begin{gather}
 b_ja_i = b_i a_j \ 0 \leq i,j \leq 2 \nonumber \\
 \tilde{b}_1 a_2 = \tilde{b}_2 a_1 \nonumber \\
 d_j\tilde{b}_i = 0 \ 1 \leq i \leq 2 \ , \ 4 \leq j \leq 5 \nonumber \\
 c_jsb_i = 0 \ 0 \leq i \leq 3 \ , \ 4 \leq j \leq 5  \\
 d_jsb_i = 0 \ 1 \leq i \leq 2 \ , \ 4 \leq j \leq 5 \nonumber \\
 c_4 \tilde{b}_2 = d_4 s b_0 \ , \ c_5 \tilde{b}_1 = d_5 s b_0 \nonumber \\
 c_4 \tilde{b}_1 = c_5 \tilde{b}_2 = 0 \nonumber
\end{gather}

The differential is zero on all arrows except 
\begin{gather}
 d( \tilde{b}_i ) = sb_i, 1 \leq i \leq 2.
\end{gather}
The differential then extends linearly over the path algebra. 

To obtain the higher morphisms on $A$, one implements homological perturbation (see Section~\ref{sec: perturbation})
\begin{center}
 \begin{tikzpicture}
  \node (a) at (-1,0) {$\widehat{A}$};
  \node (b) at (1,-.075) {$A$};
  \draw[->] (a) .. controls (-2,1) and (-2,-1) .. node[left] {$h$} (a);
  \draw[->] (-.7,.1) -- node[above] {$\pi$} (.7,.1);
  \draw[->] (.7,-.2) -- node[below] {$i$} (-.7,-.2);
 \end{tikzpicture}
\end{center}
where $i$ is the inclusion using the already enforced notational identification and setting $i(t) = s$. The map $\pi$ is the obvious projection and $h$ is the homotopy which vanishes except for 
\begin{gather}
 h(s b_i )  =  \tilde{b}_i \ , \ 1 \leq i \leq 2  \\
 h(s b_i a_j) = h(s b_j a_i) = \tilde{b}_i a_j \ , \ 1 \leq i \leq 2, 0 \leq j \leq 3.
\end{gather}
To calculate $m^A_3$ we use the formula \eqref{eq: m3} obtained by homological perturbation in Section~\ref{sec: perturbation}  which we recall here.
\begin{equation}
 m^A_3 (x ,y , z) =\pi (m_{\tilde{A}}^2 ( h ( m^A_2 (i(x), i(y)) ) , i(z)) ) - (-1)^{|x|} m_{\tilde{A}}^2 (i(x),  h ( m^A_2 (i(y), i(z)) ) ) ) . 
\end{equation}
This can be computed to give zero on all triples other than three possible sequences of elements in $A$. We obtain  
\begin{align}
 m_3^A (d_4, t , b_2 ) & = c_4 t b_0 , & m_3^A (d_5 , t , b_1 ) &= c_5 t b_0 , \\
 m_3^A (d_4 , t , b_2 a_0  ) &= c_4 t b_0 a_0 , & m_3^A (d_5 , t ,  b_1 a_0) &= c_5 t b_0 a_0\\
 m_3^A (d_4 , t b_0 , a_2 ) &= c_4 t b_0 a_0 , & m_3^A (d_5 , t b_0 , a_1) &= c_5 t b_0 a_0
\end{align}

\subsubsection{The $A$-model computation}

The mirror picture of the run of the toric Mori program appearing in Figure~\ref{fig: minimal model run} follows from the monotone path polytope description.  It is a degenerated LG model $\{\psi (t)\}$. The cycle corresponding to the maximal degeneration
$\psi (0)$ has a sequence of components $\{Z_1, Z_2, Z_3, Z_4, Z_5\}$ which are weighted projective
line orbits in $\mathcal{X}_{\Sigma (A)}$. Combinatorially, this corresponds to a path in $\Sigma (A)$ with vertices
$P = \{T_0, \ldots, T_5\}$ associated to triangulations of $(Q,A)$ and edges $\{T_i, T_{i + 1}\}$ associated to extended circuits.  Note that this is just a reinterpretation of the choice of a run of the toric Mori program. 

Utilizing the formula given in \eqref{eq:mult form}, the ramifications of the components $\psi_i (0)$ of $\psi (0)$ can be
computed using the vertices $\phi_{T_i}$ to be $\{3,1,1,1,1\}$. Indeed, this formula amounts to finding the difference between the sum of the volumes of all triangles containing $0$ as a vertex in $T_{i + 1}$ with the same sum for $T_i$. Inspecting Figure~\ref{fig:tropical Morse mirror}, one sees that no triangles contain $0$ in $T_0$, while three contain it $T_1$ yielding $\left< e_0 ,\varphi_{T_1} - \varphi_{T_0}  \right> = 3$. For each remaining edge, a single unit volume triangle is added to the set containing zero as a vertex, so each has multiplicity $a_i = 1$.  As there are no degenerate circuits in this sequence, this also gives the number of critical values in each annular decomposition of $\psi (t)$ for $t \approx 0$.

\begin{figure}[h]
\includegraphics{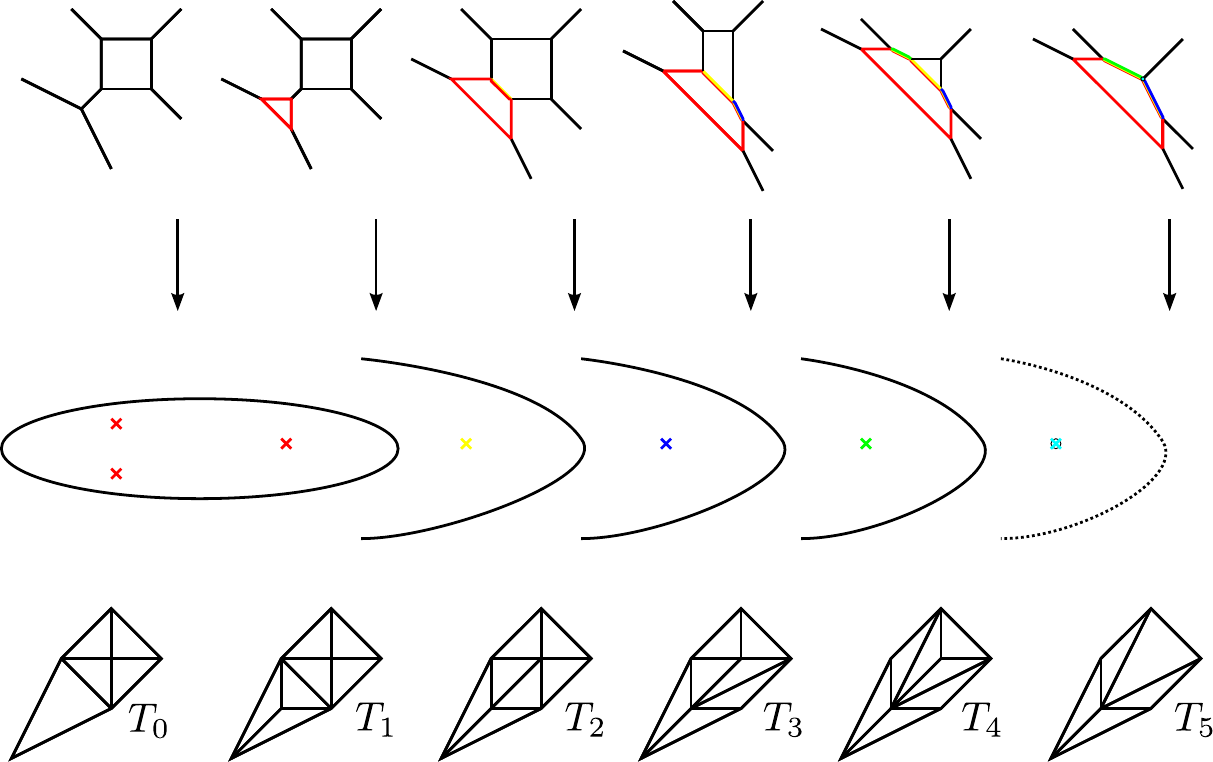}
\caption{\label{fig:tropical Morse mirror} The potential near the mirror degeneration.}
\end{figure}

Figure~\ref{fig:tropical Morse mirror} illustrates, the triangulations $T_i$, together with the critical fibers arranged into annuli as $t$ approaches zero and the tropical limit of the hypersurface over $\psi(t)$ near the bounding radii of each annulus.  Each critical fiber is color-coordinated with the corresponding critical value.

As a sanity check with the $B$-side, let us pause for a moment to consider Figure~\ref{fig:tropical fiber} which displays the tropicalization of the fiber corresponding to the outer radius of the annulus
associated to $Z_4$ (or equivalently the tropical variety dual to $T_5$).   
Notice that the two vanishing cycles (pictured in red and blue) are disjoint.  Applying Proposition~\ref{prop:tropical vanishing}, one confirms immediately that the semi-orthogonal components
$\fs{\psi_3 (t)}$ and $\fs{\psi_4 (t)}$ are in fact orthogonal. This is certainly the case on the $B$-side, as these two
categories are generated by structure sheaves of disjoint exceptional loci.

\begin{figure}[h]
 \includegraphics{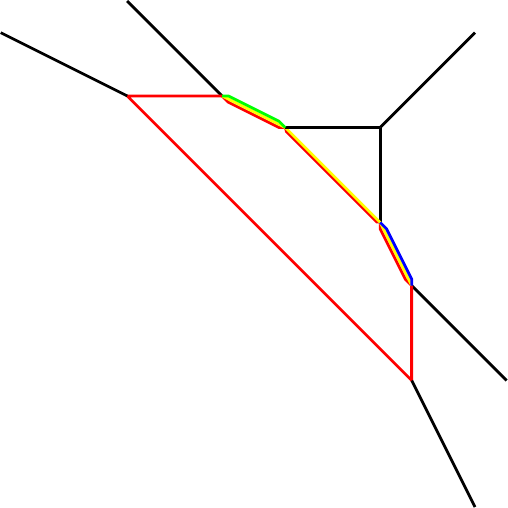}
 \caption{\label{fig:tropical fiber} The mirror fiber and tropical vanishing cycles for $X$.}
\end{figure}

To each critical fiber in Figure~\ref{fig:tropical Morse mirror} we also associate a color-coordinated vanishing cycle in Figure~\ref{fig:fiber}. This would be done, theoretically, using the prescription in Section~\ref{subsubsec: tropical} which characterizes the vanishing cycles associated to a circuit as stable tropical Morse cells.  However, 
as the calculus mentioned in Section \ref{sec: general method} has not yet been developed, we instead utilize the local model for vanishing cycles of surface circuits obtained in Figure~\ref{fig:circuit vanishing cycles}. 

Near the first component $Z_0$ in the degeneration, $\psi(t)$ has $3$ critical values whose vanishing cycle consists of the $(1,3)$ circuit vanishing cycle $L_2$ drawn in Figure~\ref{fig:circuit vanishing cycles} along with two vanishing cycles.
 The first of these, $L_1$, is obtained by Dehn twisting once around each cylinder corresponding to an internal edge of the stable complex and the second, $L_0$, is obtained by Dehn twisting twice along these cylinders. 
For each of the remaining critical values, we obtain the local model of a $(2,2)$ circuit.
 As the vanishing cycle flows along its distinguished basis, one must keep careful track of its image in the fiber.
  Assembling these models onto the fiber near the outer radii of the $Z_4$ degeneration produces Figure~\ref{fig:fiber}. 

\begin{figure}[h]
 \includegraphics{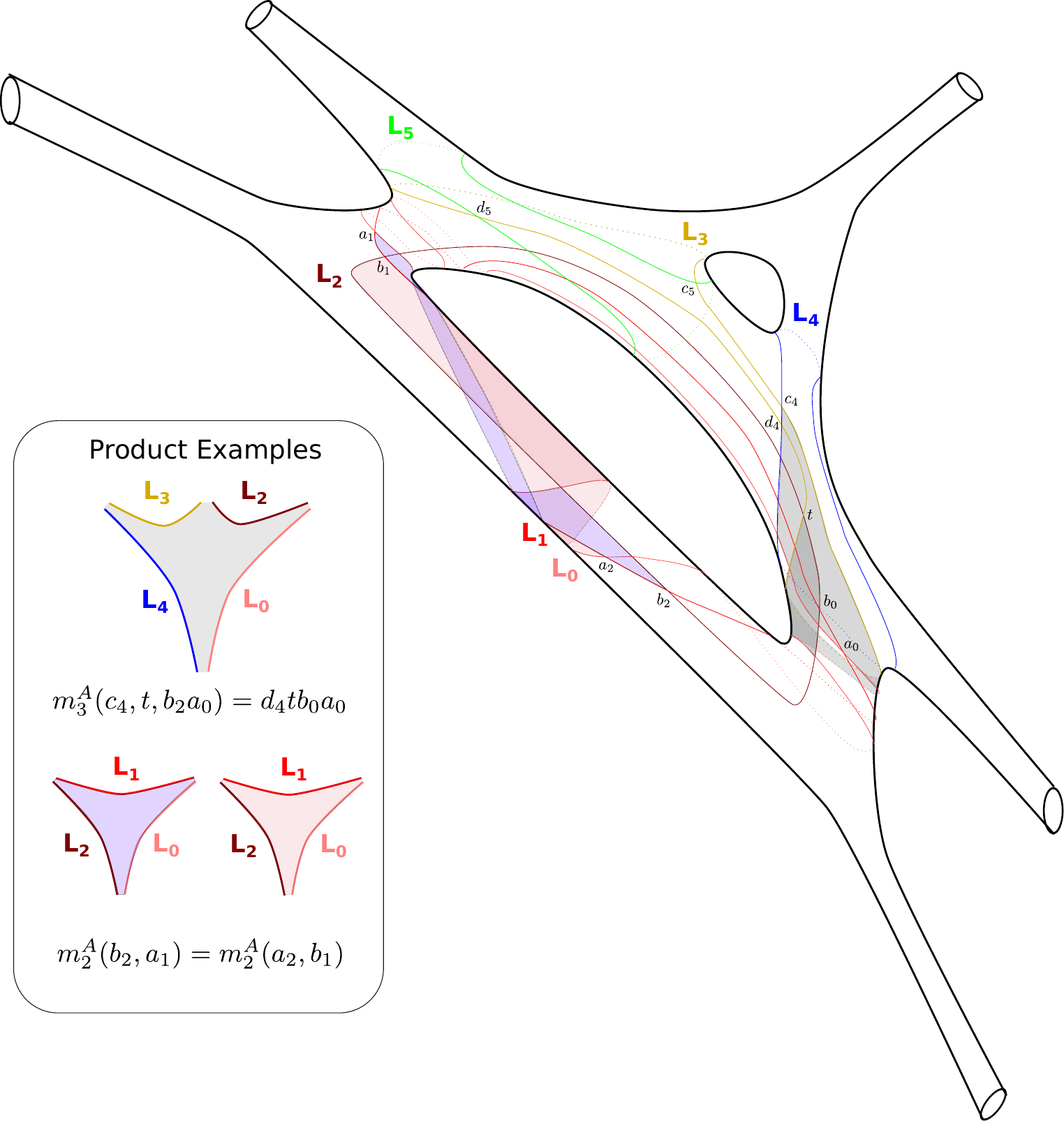}
 \caption{\label{fig:fiber} The fiber $F$ with vanishing cycles and one example of a higher product}
\end{figure}

We conclude with a visual inspection of the $A_\infty$-algebra associated to the cycles $\oplus_{i = 0}^5 L_i$ in $\fs{\psi}$ as illustrated in  Figure~\ref{fig:fiber}. 
To this end, we have labeled the intersections of the vanishing cycles in bijection with the morphisms obtained in the previous section for the $B$-model. 

 For the rigorous definition of the category associated to a collection of vanishing cycles in the general setting  of symplectic Lefschetz pencils, see \cite{SeiBook}.  We briefly recall the definition of the higher products $m_k$ now .
 
 The $A_\infty$-structure maps $\{m_k \}_{k  \geq 1}$ are determined by a count of holomorphic polygons with appropriate boundary conditions. More precisely, let $L_{i_k}, \ldots, L_{i_1}$ be $k$ distinct vanishing cycles in the collection, with $i_1 < \cdots < i_k$. Here the subscript indicates to which path in the distinguished basis the vanishing cycle corresponds. By a $(k + 1)$-marked disc $\Delta$, we mean the disc in $\mathbb{C}$ with $(k + 1)$ marked points $\{p_0, p_1, \ldots, p_k \}$ on the boundary oriented counterclockwise. We think of the $k$ points $p_i$ as incoming points and $p_0$ as an outgoing point and write $\partial_i \Delta$ for the arc on the boundary of $\Delta$ between $p_i$ and $p_{i + 1}$.
 
 Finally, take $\mathcal{M}_k (F , L_{i_k}, \ldots L_{i_0})$ to be the moduli space of holomorphic maps $u$ from a $(k + 1)$-marked disc $\Delta$ to $F$ such that $u (\partial_j \Delta ) \subset L_{i_j}$. Let $\mathcal{M}^0_k ( F , L_{i_k}, \ldots L_{i_0})$ be the zero dimensional subspace in $\mathcal{M}_k  (F , L_{i_k}, \ldots L_{i_0})$. In general, this space is determined by considering maps with Maslov index $2$, which in the $1$ dimensional setting amounts to restricting to holomorphic immersions. For exact manifolds like $F$, the zero dimensional moduli spaces are compact and can be given an orientation, so that we obtain an integer $\chi ( \mathcal{M}_k  (F , L_{i_k}, \ldots L_{i_0}) ) \in \mathbb{Z}$ equal to the Euler characteristic. Then for $p_{i_j} \in L_{i_j} \cap L_{i_{j + 1}} = \textnormal{Hom}^* (L_{i_j}, L_{i_{j + 1}} )$ we compute the $k$-th $A_\infty$-product as 
\begin{equation} m_k (a_{i_k} , \ldots, p_{i_0}) = \sum_{p_0 \in L_{i_k} \cap L_{i_0} } \chi ( \mathcal{M}_k  (F , L_{i_k}, \ldots L_{i_0}) ) p_0 . \label{eq: product} \end{equation}

Applying this formula to the vanishing cycles $\{L_0, \ldots, L_5 \}$ in the fiber $X$, one sees the underlying scissors quiver in Figure~\ref{fig: cohomology quiver} based on the marking of arrows in Figure~\ref{fig:tropical Morse mirror}.  To verify the relations obtained for the $B$-model, one must count holomorphic triangles according to \eqref{eq: product}.  One such count is made between the Lagrangians $L_0, L_1, L_2$ in Figure~\ref{fig:tropical Morse mirror}.  It produces the relation
\[
m^A_2(b_2, a_1) = m^A_2(a_2, b_1).
\]
\sidenote{{\color{red} Signs clear from the picture?} Signs and sign conventions may not be fixable by 1.}
The rest of the relations are handled similarly.

To calculate the higher products $m_3$ one verifies that there are six holomorphic quadralaterals with cyclic boundary conditions.  One such  relation and one quadralateral appears in Figure~\ref{fig:fiber} between the Langrangians $L_0, L_2, L_3, L_4$. It produces higher product
\[
m^A_3(c_4, t, b_2a_0) = d_4tb_0a_0.
\]
The rest can be found at {\color{blue} \href{http://www.math.miami.edu/~gdkerr/products.pdf}{www.math.miami.edu/~gdkerr/products.pdf}}.  Finally, $m_n =0 $  for $n \geq 4$ as a formal consequence of the underlying cohomology algebra just like on the $B$-side.


\end{document}